\numberwithin{equation}{section}
\newtheorem{thm}{Theorem}[section]
\newtheorem{cor}[thm]{Corollary}
\newtheorem{lem}[thm]{Lemma}
\newtheorem{prop}[thm]{Proposition}
\newtheorem{defn}[thm]{Definition}
\newtheorem{exam}[thm]{Example}
\newtheorem{rem}[thm]{Remark}
\newtheorem{nota}[thm]{Notation}
\newcommand{\Ann}{\mbox{Ann}\,}
\newcommand{\width}{\mbox{width}\,}
\newcommand{\vdim}{\mbox{vdim}\,}
\newcommand{\coker}{\mbox{Coker}\,}
\newcommand{\Hom}{\mbox{Hom}\,}
\newcommand{\Ext}{\mbox{Ext}\,}
\newcommand{\Tor}{\mbox{Tor}\,}
\newcommand{\Spec}{\mbox{Spec}\,}
\newcommand{\Ass}{\mbox{Ass}\,}
\newcommand{\Att}{\mbox{Att}\,}
\newcommand{\Supp}{\mbox{Supp}\,}
\newcommand{\depth}{\mbox{depth}\,}
\renewcommand{\dim}{\mbox{dim}\,}
\newcommand{\grd}{\mbox{grade}\,}
\newcommand{\pd}{\mbox{pd}\,}
\newcommand{\id}{\mbox{id}\,}
\newcommand{\fd}{\mbox{fd}\,}
\newcommand{\gd}{\mbox{G--dim}\,}
\newcommand{\mg}{\mbox{mag}\,}
\newcommand{\cass}{\mbox{Coass}\,}
\newcommand{\h}{\mbox{ht}\,}
\newcommand{\E}{\mbox{E}}
\renewcommand{\H}{\mbox{H}}
\newcommand{\Soc}{\mbox{Soc}\,}
\newcommand{\fa}{\mathfrak{a}}
\newcommand{\fm}{\mathfrak{m}}
\newcommand{\fp}{\mathfrak{p}}
\newcommand{\fn}{\mathfrak{n}}
\begin{document}
\bibliographystyle{amsplain}


\title[Codualizing modules]
 {Codualizing modules}

\bibliographystyle{amsplain}

     \author[M. Rahmani]{Mohammad Rahmani}
     \author[A.- J. Taherizadeh]{Abdoljavad Taherizadeh}

\address{Faculty of Mathematical Sciences and Computer,
Kharazmi University, Tehran, Iran.}

\email{m.rahmani.math@gmail.com}
\email{taheri@khu.ac.ir}

\keywords{Semidualizing modules, quasidualizing modules, dualizing modules, codualizing modules.}
\subjclass[2010]{13C05, 13H10, 13D07, 13D05}


\begin{abstract}
Let $(R, \fm)$ be a Noetherian local ring. In this paper, we introduce a dual notion for dualizing modules, namely codualizing modules. We study the basic properties of codualizing modules and use them to establish an equivalence between the category of noetherian modules of finite projective dimension and the category of artinian modules of finite projective dimension. Next, we give some applications of codualizing modules. Finally, we present a mixed identity involving quasidualizing module that characterize the codualizing module. As an application, we obtain a necessary and sufficient condition for $R$ to be Gorenstein. 
\end{abstract}

\maketitle

\bibliographystyle{amsplain}
\section{introduction}

Throughout this paper, $(R, \fm , k)$ is a commutative Noetherian local ring with non-zero identity. A finitely generated $R$-module $C$ is semidualizing if the natural homothety map $ R\longrightarrow \Hom_R(C,C) $ is an isomorphism and $ \Ext^i_R(C,C)=0 $ for all $ i>0 $. Semidualizing modules have been studied by Foxby \cite{F}, Vasconcelos \cite{V}
and Golod \cite{G}. A semidualizing $R$-module $C$ is called dualizing if $ \id_R(C) < \infty $. Recently, B. Kubik \cite{K}, introduced the dual notion of semidualizing modules, namely quasidualizing modules. An Artinian $R$-module $T$ is quasidualizing if the natural homothety map $ \widehat{R} \longrightarrow \Hom_R(T,T) $ is an isomorphism and $ \Ext^i_R(T,T)=0 $ for all $ i>0 $. We denote the class of semidualizing and quasidualizing $R$-modules by $ \mathfrak{S}_0(R) $ and $ \mathfrak{Q}_0(R) $, respectively. In \cite[Theorem 3.1]{K}, Kubik showed that if $R$ is complete, then the Matlis duality functor, that is $ \Hom_R(-,E(k)) $, provides an inverse bijection between $ \mathfrak{S}_0(R) $ and $ \mathfrak{Q}_0(R) $. In Theorem 3.1, we replace $ E(k) $ with a suitable quasidualizing $R$-module. More precisely, we prove the following:\\
\textbf{Theorem 1.} Let $C$ be a semidualizing and $T , T'$ are quasidualizing $R$-modules.
\begin{itemize}
	\item[(i)]{If $ T \in \mathcal{B}_C $, then $ \Hom_R(C,T) \in \mathfrak{Q}_0(R)$.}
	\item[(ii)]{If $R$ is complete and $ T' \in \mathcal{B}_T $, then $ \Hom_R(T',T) \in \mathfrak{S}_0(R)$.}
\end{itemize}
Next, In section 4, we define a dual notion for dualizing modules, namely codualizing modules. A quasidualizing $R$-module $T$ is said to be codualizing if
$ \pd_R(T) < \infty $. We state and prove some basic facts about codualizing modules. In this section, we discuss about the existence and uniqueness of codualizing modules over Cohen-Macaulay rings that are homomorphic image of Gorenstein rings. Also, as a dual of a theorem of R.Y. Sharp \cite[Theorem 2.9]{S}, we obtain an inverse equivalence between the category of noetherian modules of finite projective dimension $ \mathcal{P}^{noet}(R) $ and the category of artinian modules of finite projective dimension  $ \mathcal{P}^{art}(R) $. The following is Theorem 4.14. \\
\textbf{Theorem 2.} Let $R$ be complete and let $T$ be codualizing. There is an inverse equivalence of categories $ \Hom_R(-,T) :  \mathcal{P}^{art}(R) \longrightarrow \mathcal{P}^{noet}(R)$ with the functor being its own quasi-inverse. 

In the last section, we obtain a characterization of codualizing modules in terms of isomorphisms involving $ \Ext $ and $ \Tor $. More precisely, the following is Theorem 5.12. \\
\textbf{Theorem 3.} Let $T$ ba a quasidualizing $R$-module and let $\dim(R)=d$. The following are equivalent:
\begin{itemize}
	\item[(i)]{$T$ is codualizing.}
	\item[(ii)]{One has
		$$\Tor^R_i(T,\Ext^j_R(T,k)) \cong \left\lbrace
		\begin{array}{c l}
		k\ \ & \text{ \ \ \ \ \ $i=j=d$,}\\
		0\ \   & \text{  \ \ $\text{ \ \ otherwise}$.}
		\end{array}
		\right.$$
	}
	\item[(iii)]{One has
		$$\Ext_R^i(T,\Tor^R_i(T,k)) \cong \left\lbrace
		\begin{array}{c l}
		k\ \ & \text{ \ \ \ \ \ $i=j=d$,}\\
		0\ \   & \text{  \ \ $\text{ \ \ otherwise}$.}
		\end{array}
		\right.$$
	}
\end{itemize}
We use this result to characterize Gorenstein rings. 
\section{preliminaries}

In this section, we recall some definitions and facts which are needed throughout this
paper. For an $R$-module $M$, the injective hull of $M$, is always denoted by $E(M)$. Also we use $ (-)^{\vee} = \Hom_R(-,E(R/\fm)) $ for the Matlis duality functor and use $ \nu_R(M) $ for the cardinal number of minimal generating set of a finitely generated $R$-module $M$, that is, 
$ \nu_R(M) = \vdim_k(k \otimes_R M) $. The depth of a
(not necessarily finitely generated) $ R $-module $ M $ is \\
\centerline{$ \depth_R(M) = \inf\{ i \geq 0 \mid \Ext_R^i(R/\fm,M) \neq 0 \} $.}
Also the width of $M$ is defined to be \\
\centerline{$ \width_R(M) = \inf\{ i \geq 0 \mid \Tor^R_i(R/\fm,M) \neq 0 \} $.} 
If $M$ is finitely generated with $ \depth_R(M) = t $, then the type of $M$, denoted by $ r_R(M) $, is defined to be
$  r_R(M) = \vdim_k(\Ext^t_R(k,M)) $. If $ \dim_R(M) = \depth_R(M) $, then $M$ is said to be a Cohen-Macaulay (abbreviated to CM) $R$-module. If, in addition, $ \dim_R(M) = \dim(R) $, then $M$ is said to be a maximal Cohen-Macaulay (abbreviated to MCM) $R$-module.
\begin{defn}
 \emph{A finitely generated $ R $-module $ C $ is \textit{semidualizing} if it satisfies the following conditions:
\begin{itemize}
             \item[(i)]{The natural homothety map $ \chi_R^C : R \longrightarrow \Hom_R(C,C) $ is an isomorphism.}
             \item[(ii)]{$ \Ext^i_R(C,C)=0 $ for all $ i>0 $.}
          \end{itemize}
         For example $R$ itself is semidualizing. An $R$-module $D$ is \textit{dualizing} if it is semidualizing and that $\id_R (D) < \infty $. For example, the canonical module of a Cohen-Macaulay local ring, if exists, is dualizing.}

\emph{Following \cite{K}, an Artinian $ R $-module $ T $ is called \textit{quasidualizing} if it satisfies the following conditions:
\begin{itemize}
	\item[(i)]{The natural homothety map $ \chi_{\widehat{R}}^T : \widehat{R} \longrightarrow \Hom_R(T,T) $ is an isomorphism.}
	\item[(ii)]{$ \Ext^i_R(T,T)=0 $ for all $ i>0 $.}
\end{itemize}}
\end{defn}
For example $ E(R/\fm) $ is a quasidualizing $ R $-module. Also, if $R$ is Artinian, then it is quasidualizing.

\begin{defn}
	\emph{Let $X$ be an $R$-module. The \textit{Auslander class with
			respect to} $X$ is the class $\mathcal{A}_X$ of $R$-modules $M$ such that:
		\begin{itemize}
			\item[(i)]{$\Tor_i^R(X,M) = 0 = \Ext^i_R(X, X \otimes_R M)$ for all $i \geq 1$, and}
			\item[(ii)]{The natural map $ \omega_{XXM} : M \rightarrow \Hom_R(X , X \otimes_R M )$ is an isomorphism.}
		\end{itemize}		
		The \textit{Bass class with
			respect to} $X$ is the class $\mathcal{B}_X$ of $R$-modules $M$ such that:
		\begin{itemize}
			\item[(i)]{$\Ext^i_R(X,M) = 0 = \Tor_i^R(X, \Hom_R(X,M))$ for all $i \geq 1$, and}
			\item[(ii)]{The natural map $ \theta_{XXM} : X \otimes_R \Hom_R(X,M) \rightarrow M $ is an isomorphism.}
		\end{itemize}
		Following \cite{K}, the class of \textit{derived $X$-reflexive} $R$-modules, denoted by $ \mathcal{G}_X $, consists of those $R$-modules $M$ for which
			\begin{itemize}
				\item[(i)]{$\Ext^i_R(M,X) = 0 = \Ext^i_R(\Hom_R(M,X), X)$ for all $i \geq 1$, and}
				\item[(ii)]{The natural biduality map $ \delta_{MXX} : M \rightarrow \Hom_R(\Hom_R(M,X),X) $ is an isomorphism.}
			\end{itemize}
					If $C$ is semidualizing, then $\mathcal{A}_C$ contains all $R$-modules of finite projective dimension and  $\mathcal{B}_C$ contains all $R$-modules of finite injective dimension. Also, if any two $ R $-modules in a short exact sequence are in $ \mathcal{A}_C $ (resp. $ \mathcal{B}_C $), then so is the third (see \cite[Corollary 6.3]{HW}). If 
					$X=R$, then the class $ \mathcal{G}_R $ is the so-called totally reflexive $R$-modules. Also, if $T$ is quasidualizing, then the class $ \mathcal{G}_T $ is introduced in \cite{K} as a generalization of Matlis reflexive modules.}
\end{defn}
Recall that an element $ x \in R $ is said to be coregular on the $R$-module $ M $ if the map $ M \overset{x}\longrightarrow M$ is surjective. Also, the sequence $ x_1, \cdots ,x_n $ is said to be a $M$-coregular sequence if $ x_i $ is $ 0 \underset{M}: (x_1,\cdots,x_{i-1}) $-coregular for all $ 1\leq i \leq n $. 
\begin{lem}
	Let $x \in \fm$ be $R$-regular and $T \in \mathfrak{Q}_0(R)$. Then for all $i > 0$, we have\\
	\centerline{$ \emph{\Ext}_R^i(R/xR,T) = 0 = \emph{\Ext}_R^i(\emph{\Hom}_R(R/xR,T),T) $}
\end{lem}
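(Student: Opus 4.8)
The plan is to reduce both vanishing statements to the single assertion that $x$ acts surjectively on $T$, i.e.\ $xT=T$, and then read everything off two long exact sequences. Granting $xT=T$, the first equality follows at once: apply $\Hom_R(-,T)$ to the exact sequence $0\to R\xrightarrow{x}R\to R/xR\to0$ (exact since $x$ is $R$-regular). As $R$ is free, $\Ext^i_R(R,T)=0$ for $i>0$, so the long exact sequence gives $\Ext^i_R(R/xR,T)=0$ for $i\geq2$ and $\Ext^1_R(R/xR,T)\cong T/xT=0$. For the second equality, observe that $\Hom_R(R/xR,T)=0:_T x$, and that $xT=T$ makes multiplication by $x$ on $T$ surjective with kernel $0:_T x$, giving the exact sequence $0\to 0:_T x\to T\xrightarrow{x}T\to0$. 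Applying $\Hom_R(-,T)$ and using $\Ext^i_R(T,T)=0$ for all $i>0$ (part of the definition of a quasidualizing module), the long exact sequence forces $\Ext^i_R(0:_T x,T)=0$ for every $i\geq1$, as required.

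So the whole content is the divisibility $xT=T$, and this is where I would pass to the completion. Since $T$ is Artinian, $T^{\vee}=\Hom_R(T,E(R/\fm))$ is a finitely generated $\widehat R$-module with $T^{\vee\vee}\cong T$, and Matlis duality yields $\Hom_{\widehat R}(T^{\vee},T^{\vee})\cong\Hom_R(T,T)\cong\widehat R$, the last isomorphism being the quasidualizing homothety. From the standard identity $\Ass_{\widehat R}(\Hom_{\widehat R}(N,N))=\Supp_{\widehat R}(N)\cap\Ass_{\widehat R}(N)$ for finitely generated $N$, applied to $N=T^{\vee}$, we get $\Ass_{\widehat R}(T^{\vee})=\Ass_{\widehat R}(\widehat R)$. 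Now $x$ is $R$-regular and $\widehat R$ is faithfully flat over $R$, so the image of $x$ in $\widehat R$ is $\widehat R$-regular and hence lies in no associated prime of $\widehat R$, and therefore in no associated prime of $T^{\vee}$; thus $x$ is a nonzerodivisor on $T^{\vee}$ and $0:_{T^{\vee}}x=0$. Finally, Hom-tensor adjointness together with Matlis duality gives $(T/xT)^{\vee}\cong\Hom_{\widehat R}(\widehat R/x\widehat R,T^{\vee})=0:_{T^{\vee}}x=0$, so $T/xT=0$ by faithfulness of $(-)^{\vee}$.

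The main obstacle is precisely the surjectivity $xT=T$: an $\fm$-torsion module need not be $x$-divisible (e.g.\ $k$ itself when $\dim R\geq1$), so one genuinely needs the isomorphism $\widehat R\cong\Hom_R(T,T)$, which is exactly what the associated-prime computation exploits. Everything after that is a routine chase. One could alternatively deduce $xT=T$ from the fact, implicit in \cite{K}, that $T^{\vee}$ is a semidualizing $\widehat R$-module (semidualizing modules share the associated primes of the ring), but the argument above keeps the lemma self-contained.
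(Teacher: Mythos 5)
Your proof is correct, and it shares the essential input with the paper's while organizing the deductions differently. The key fact in both arguments is the same: under Matlis duality $T^{\vee}$ is a finitely generated $\widehat R$-module with $\Hom_{\widehat R}(T^{\vee},T^{\vee})\cong\widehat R$, so $\Ass_{\widehat R}(T^{\vee})=\Ass(\widehat R)$ and the ($\widehat R$-regular) element $x$ is a nonzerodivisor on $T^{\vee}$, equivalently $xT=T$. The paper gets this by citing Kubik's theorem that $T^{\vee}$ is semidualizing; you rederive exactly the consequence you need from the homothety isomorphism and the formula $\Ass(\Hom(M,N))=\Supp(M)\cap\Ass(N)$, which keeps the lemma self-contained. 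Where you genuinely diverge is in the second vanishing: the paper runs a change-of-rings spectral sequence $\Ext^p_R(\Ext^q_R(R/xR,T),T)\Rightarrow\Tor^R_n(R/xR,\Hom_R(T,T))$ and lets it collapse, whereas you use the coregularity short exact sequence $0\to(0:_Tx)\to T\xrightarrow{x}T\to0$ and squeeze $\Ext^i_R(0:_Tx,T)$ between two vanishing $\Ext$'s of $T$ against itself. Your route is more elementary (no spectral sequences) and also proves the first vanishing by the same two-term free resolution, identifying $\Ext^1_R(R/xR,T)\cong T/xT$ directly rather than dualizing to $\Tor_1^R(R/xR,T^{\vee})$ as the paper does; the paper's spectral sequence yields the extra identification $\Ext^i_R(\Hom_R(R/xR,T),T)\cong\Tor^R_i(R/xR,\widehat R)$, but that surplus is not needed for the statement.
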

\begin{proof}
We can assume that $R$ is complete. First, since $ \fd_R(R/xR) = 1 < \infty $, and that $ T^{\vee} \in \mathfrak{S}_0(R) $ by \cite[Theorem 3.1]{K}, we have $ \Ext_R^i(R/xR,T)^{\vee} \cong \Tor_i^R(R/xR,T^{\vee}) = 0 $ for all $i > 0$ by \cite[Theorem 3.2.13]{EJ1}. Hence $ \Ext_R^i(R/xR,T) = 0 $ for all $i > 0$. Next, note that by \cite[Theorem 10.66]{Ro}, there is a third quadrant spectral sequence \\
\centerline{$ \E^{p,q}_2 = \Ext^p_R\big(\Ext_R^q(R/xR,T) , T\big) \underset{p}\Longrightarrow \Tor_n^R\big(R/xR,\Hom_R(T,T)\big) $.}
But $ \Ext_R^q(R/xR,T) = 0 $ for all $ q > 0 $ and hence $ \E^{p,q}_2 $ collapses on the $ p $-axis. So that, for all $ i > 0 $, we get \\
\centerline{$ \Ext_R^i(\Hom_R(R/xR,T),T) \cong  \Tor_i^R(R/xR,R) = 0$.}
\end{proof}
\begin{prop}\label{B0}
Let $C \in \mathfrak{S}_0(R)$ and $T \in \mathfrak{Q}_0(R)$. Then we have the following:
\begin{itemize}
           \item[(i)]{$\emph\dim(C) = \emph\dim(R)$ and $\emph{\Ass}_R(C) = \emph{\Ass}(R)$. If, in addition, $R$ is complete, then
           	$  \emph{\Ass}(C) = \emph{\Att}_R(T) $.}
           \item[(ii)]{For a non-zero $R$-module $M$, we have $ C \otimes_R M \neq 0 $ and that 
           	$ \emph{\Hom}_R(C,M) \neq 0 \neq \emph{\Hom}_R(M,T) $.}
           \item[(iii)] {An element $x \in \fm$ is $R$-regular $(C\emph{-}$regular$)$ if and only if it is $T$-coregular.}
            \item[(iv)]{ Suppose that $ x \in \fm $ is $R$-regular. Then $C/ xC \in \mathfrak{S}_0(R/xR)$ and $\emph{\Hom}_R(R/xR,T) \in \mathfrak{Q}_0(R/xR)$.}
             \item[(v)]{$\emph{\depth}_R (C) = \emph{\depth}(R) = \emph{\width}_R(T) $.}
             \item[(vi)]{$ C $ and $T$ are both indecomposable.}
             \end{itemize}
\end{prop}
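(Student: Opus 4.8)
My plan rests on two devices: the homothety isomorphisms $R\cong\Hom_R(C,C)$ and $\widehat R\cong\Hom_R(T,T)$, and a reduction to the complete case in which Matlis duality turns $T$ into a semidualizing module. First I would observe that replacing $R$ by $\widehat R$ replaces $C$ by the semidualizing $\widehat R$-module $C\otimes_R\widehat R$ (flat base change) and leaves $T$ — an Artinian, hence $\widehat R$-, module — untouched, and that it alters none of $\dim$, $\Ass$, $\Att$, $\depth$, $\width$, (co)regularity of an element of $\fm$, or indecomposability; so for every clause concerning $T$ I would assume $R$ complete and invoke $T^{\vee}\in\mathfrak{S}_0(R)$ from \cite[Theorem 3.1]{K}. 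The recurring tools are $\Ass_R\Hom_R(X,Y)=\Supp_R X\cap\Ass_R Y$ for finitely generated $X$, the Matlis identities $\Att_R T=\Ass_R T^{\vee}$, $(X\otimes_R Y)^{\vee}\cong\Hom_R(X,Y^{\vee})$, $T\cong(T^{\vee})^{\vee}$, $\Tor^R_i(k,T)^{\vee}\cong\Ext^i_R(k,T^{\vee})$, and the faithful exactness of $(-)^{\vee}$.

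For (i) I would apply $\Ass_R(-)$ to $R\cong\Hom_R(C,C)$ to get $\Ass_R R=\Supp_R C\cap\Ass_R C=\Ass_R C$; since $\Min R\subseteq\Ass_R C\subseteq\Supp_R C$ this forces $\Supp_R C=\Spec R$, hence $\dim C=\dim R$, while with $R$ complete $\Att_R T=\Ass_R T^{\vee}=\Ass_R R=\Ass_R C$, the middle step being (i) applied to $T^{\vee}$. For (ii): given $M\neq0$ pick $\fp\in\Ass_R M$, so $R/\fp\hookrightarrow M$; then $\Hom_R(C,R/\fp)=\Hom_{R/\fp}(C/\fp C,R/\fp)\neq0$ because $C/\fp C$ is a nonzero finitely generated module over the domain $R/\fp$, and this embeds in $\Hom_R(C,M)$; next $C\otimes_R M\neq0$ follows from $(C\otimes_R M)^{\vee}\cong\Hom_R(C,M^{\vee})\neq0$, and (over complete $R$) $\Hom_R(M,T)\cong(M\otimes_R T^{\vee})^{\vee}$, nonzero because $M\otimes_R T^{\vee}\neq0$ by the first clause of (ii) for $T^{\vee}$. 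For (iii): $x\in\fm$ is $R$- (resp.\ $C$-) regular iff it avoids $\bigcup\Ass_R R$ (resp.\ $\bigcup\Ass_R C$), and these agree by (i); and $x$ is $T$-coregular iff $T\xrightarrow{x}T$ is onto iff $T^{\vee}\xrightarrow{x}T^{\vee}$ is injective iff $x$ is $T^{\vee}$-regular, i.e.\ $R$-regular.

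For (iv), taking $x\in\fm$ which is $R$-, $C$-, and $T$-coregular by (iii), I would apply $\Hom_R(C,-)$ to $0\to C\xrightarrow{x}C\to C/xC\to0$: using $\Ext^{>0}_R(C,C)=0$ this gives $R/xR\cong\Hom_R(C,C/xC)=\Hom_{R/xR}(C/xC,C/xC)$ (the last equality since $x$ kills $C/xC$) and $\Ext^i_R(C,C/xC)=0$ for $i>0$, and then the change-of-rings isomorphism $\Ext^i_{R/xR}(C/xC,-)\cong\Ext^i_R(C,-)$ (valid as $x$ is both $R$- and $C$-regular) yields $C/xC\in\mathfrak{S}_0(R/xR)$; dualizing $0\to(0:_T x)\to T\xrightarrow{x}T$ gives $(0:_T x)^{\vee}\cong T^{\vee}/xT^{\vee}\in\mathfrak{S}_0(R/xR)$, whence $\Hom_R(R/xR,T)=(0:_T x)\in\mathfrak{Q}_0(R/xR)$ by \cite[Theorem 3.1]{K} over the complete ring $R/xR$ (whose Matlis duality is the restriction of $(-)^{\vee}$). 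For (v) I would get $\depth_R C=\depth R$ by induction on $\depth R$ — base case $\fm\in\Ass_R R=\Ass_R C$, step using an $R$- (hence $C$-) regular $x\in\fm$ together with $\depth_R C=\depth_{R/xR}(C/xC)+1$, $\depth R=\depth(R/xR)+1$ and (iv) — and then $\width_R T=\inf\{i:\Tor^R_i(k,T)\neq0\}=\inf\{i:\Ext^i_R(k,T^{\vee})\neq0\}=\depth_R T^{\vee}=\depth R$. For (vi) a decomposition $C=\bigoplus_j C_j$ produces pairwise orthogonal idempotents summing to $1$ in the local ring $\Hom_R(C,C)\cong R$, forcing all but one $C_j$ to vanish; the same argument with $\Hom_R(T,T)\cong\widehat R$ settles $T$.

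The delicate point is (iv): I must coordinate the regular-element reduction, the change-of-rings isomorphism for $\Ext$ over $R/xR$, and the compatibility of Matlis duality with the quotient $R/xR$, making sure that $\Hom_R(R/xR,T)$ genuinely re-enters $\mathfrak{Q}_0(R/xR)$ — which is why the complete-case reduction must be installed cleanly from the start. The remaining clauses should be routine bookkeeping with supports, associated and attached primes, and the faithful exact duality $(-)^{\vee}$.
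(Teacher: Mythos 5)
Your argument is correct, and for the clauses involving $T$ it follows a genuinely different route from the paper's. The paper proves (iii) by quoting the vanishing $\Ext_R^1(R/xR,T)=0$ from its Lemma 2.3, proves the quasidualizing claim in (iv) by direct verification (the five lemma for $\theta_{R/xRTT}$, a commutative diagram for the homothety map, and a change-of-rings spectral sequence for the $\Ext$-vanishing), and proves $\width_R(T)=\depth(R)$ in (v) by induction on $\depth(R)$ using a coregular element and a long exact $\Tor$ sequence. You instead push everything through Matlis duality over $\widehat R$: coregularity of $x$ on $T$ becomes regularity on $T^{\vee}$; $\Hom_R(R/xR,T)=(0:_T x)$ is identified as the $R/xR$-Matlis dual of the semidualizing module $T^{\vee}/xT^{\vee}$, so Kubik's Theorem 3.1 over the complete ring $R/xR$ finishes (iv); and $\width_R(T)=\depth_R(T^{\vee})$ via $\Tor^R_i(k,T)^{\vee}\cong\Ext^i_R(k,T^{\vee})$ disposes of (v) in one line. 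This buys brevity and avoids the spectral sequences entirely, at the cost of leaning on Matlis reflexivity of Artinian modules over complete rings and the compatibility of $(-)^{\vee}$ with passage to $R/xR$ --- both of which you flag and both of which hold. Parts (i) and (vi) coincide in substance with the paper's.

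One justification needs repair: in (ii) you assert $\Hom_{R/\fp}(C/\fp C,R/\fp)\neq 0$ ``because $C/\fp C$ is a nonzero finitely generated module over the domain $R/\fp$.'' That reason is not valid --- a nonzero torsion module over a domain has zero dual. What actually saves the step is that $C/\fp C$ has positive generic rank: $(C/\fp C)\otimes_{R/\fp}\kappa(\fp)\cong C_{\fp}/\fp C_{\fp}\neq 0$ by Nakayama, since $\Supp_R(C)=\Spec(R)$ gives $C_{\fp}\neq 0$. (The paper derives both nonvanishings in (ii) directly from $\Supp_R(C)=\Spec(R)$ and quotes Kubik's Lemma 3.11 for $\Hom_R(M,T)\neq 0$; your Matlis-dual derivation of the latter from the tensor statement applied to $T^{\vee}$ is fine.) With that one sentence corrected, the proof stands.
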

\begin{proof}
 In part (i), the first two equalities follow from the definition of $ C $. For the last equality, use \cite[Theorem 3.1]{K} and \cite[Corollary 10.2.20]{BS}. For (ii), $ C \otimes_R M \neq 0 $ and $\Hom_R(C,M) \neq 0$ both follow from the fact that 
 $ \Supp_R(C) = \Spec(R) $. Also $ \Hom_R(M,T) \neq 0 $ follows from \cite[Lemma 3.11]{K}. For (iii), we can assume that $R$ is complete. Now, note that for an element $x \in \fm$, 
 $ R \overset{x}\longrightarrow R $ is a monomorphism if and only if $ T \overset{x}\longrightarrow T $ is an epimorphism since by Lemma 2.3, 
 $ \Ext_R^1(R/xR,T) = 0 $. In (iv), the fact that $ C/ xC \in \mathfrak{S}_0(R/xR) $ is well-known. For the other claim, note that the exact sequence 
 \centerline{$ 0 \longrightarrow R \overset{x}\longrightarrow R \longrightarrow R/xR \longrightarrow 0 $,}
 in conjunction with the five lemma and Lemma 2.3 show that $ \theta_{R/xRTT} $ is an isomorphism.
  Consider the following commutative diagram
\begin{displaymath}
 \xymatrix{
 	\widehat{R}/x\widehat{R}  \ar@{->}[rr]^-{\chi_{\widehat{R}}^T \otimes_R R/xR} \ar@{=}[dd] &&  R/xR \otimes_R \Hom_R(T,T) \ar[d]^{\cong}_{\theta_{R/xRTT}} \\
 	&&  \Hom_R\big(\Hom_R(R/xR,T) , T \big) \ar[d]^{\cong} \\
 	\widehat{R}/x\widehat{R} \ar@{->}[rr]_-{\chi_{\widehat{R}/x\widehat{R}}^{\emph{Hom}(R/xR,T)}} &&  \Hom_{R/xR}\big(\Hom_R(R/xR,T) , \Hom_R(R/xR,T) \big)}
\end{displaymath}
in which the unspecified isomorphism is from Hom-tensor adjointness. Hence, $ \chi_{\widehat{R}/x\widehat{R}}^{Hom(R/xR,T)} $ is an isomorphism. Next, by \cite[Theorem 10.64]{Ro}, there exists a third quadrant spectral sequence\\
\centerline{$ \E^{p,q}_2 = \Ext^p_{R/xR}\big(\Hom_R(R/xR,T) , \Ext^q_R(R/xR,T)\big) \underset{p}\Longrightarrow \Ext^n_R\big(\Hom_R(R/xR,T), T\big) $.}
On the other hand, by Lemma 2.3, we have $ \Ext^q_R(R/xR,T) = 0 $ for all $ q > 0 $. Therefore, 
$ \E^{p,q}_2 $ collapses on the 
$ p $-axis and we have the isomorphism\\
\centerline{$  \Ext^n_{R/xR}\big(\Hom_R(R/xR,T) , \Hom_R(R/xR,T)\big) \cong \Ext^n_R\big(\Hom_R(R/xR,T), T\big)  = 0 $,}
for all $ n > 0 $, where the last equality holds by lemma 2.3.

(v). The left hand side equality is well-known. For the other equality, we can assume that $R$ is complete and then proceed by induction on $d = \depth(R)$. If $d = 0$, then 
$ \Hom_{\widehat{R}}(k,{\widehat{R}}) \neq 0 $, and so $ \Hom_R(k \otimes_{\widehat{R}} T,T) \cong \Hom_{\widehat{R}}(k,\Hom_R(T,T)) \neq 0 $. In particular, $ k \otimes_R T \neq 0 $, whence $ \width_R(T) = 0 $. Now, assume inductively that $d > 0$. Choose $ x \in \fm $ to be $R$-regular. By (iii), $x$ is $T$-coregular, and hence there exists an exact sequence\\
\centerline{$ 0 \longrightarrow \Hom_R(R/xR,T) \longrightarrow T \overset{x}\longrightarrow T \longrightarrow 0 $.}
By (iv), $ \Hom_R(R/xR,T) $ is a quasidualizing $R/xR$-module, and so by induction hypothesis $ \depth(R/xR) = \width_{R/xR}(\Hom_R(R/xR,T))$. Finally, if $ t = \width_R(T) $, the long exact sequence\\
\centerline{$ \cdots \rightarrow \Tor^R_{i+1}(k,T) \overset{x}\rightarrow \Tor^R_{i+1}(k,T) \rightarrow \Tor^R_i(k,\Hom_R(R/xR,T)) \rightarrow \Tor^R_i(k,T) \overset{x}\rightarrow \cdots $,}
shows that $ \Tor^R_i(k,\Hom_R(R/xR,T)) = 0 $ for all $ i < t-1 $ and since $ \Tor^R_t(k,T) \neq 0 $ we have $ \Tor^R_{t-1}(k,\Hom_R(R/xR,T)) \neq 0$. 
 Consequently, \\
 \centerline{$ \width_{R/xR}(\Hom_R(R/xR,T)) = \width_R(\Hom_R(R/xR,T)) = t - 1 $,}
  and so $ d = t $, as wanted.

(vi). Note that if $C$ (resp. $T$) is decomposable, then $R$ (resp. $\widehat{R}$) must be decomposable which is impossible since $R$ (resp. $\widehat{R}$) is local.
\end{proof}
\begin{lem}
Let $R$ be complete and $M$ be a Matlis reflexive $R$-module of finite projective dimension. Then $ M \in \mathcal{G}_T $.
\end{lem}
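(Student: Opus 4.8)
The plan is to show that $M$ lies in $\mathcal{G}_T$, i.e., that $\Ext^i_R(M,T) = 0 = \Ext^i_R(\Hom_R(M,T),T)$ for all $i \geq 1$ and that the biduality map $\delta_{MTT}$ is an isomorphism. Since $M$ has finite projective dimension and $R$ is complete, I would first reduce to understanding $\Hom_R(M,T)$ via Matlis duality. Write $T^{\vee} = \Hom_R(M, E(k))^{\vee}$-style manipulations: because $R$ is complete and $T$ is quasidualizing, $T^{\vee} \in \mathfrak{S}_0(R)$ by \cite[Theorem 3.1]{K}, and $M^{\vee}$ is then a Matlis reflexive module (being the Matlis dual of a Matlis reflexive module) with $\id_R(M^{\vee}) < \infty$ (the Matlis dual of a finite projective dimension module over a complete ring has finite injective dimension).

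\textbf{Step 1.} Since $M$ has finite projective dimension, $M \in \mathcal{A}_C$ for every semidualizing $C$; in particular this applies with $C = T^{\vee}$. Equivalently, dualizing, $M^{\vee} \in \mathcal{B}_{T^{\vee}}$. I would then use the standard Matlis-duality interplay between the Auslander/Bass classes of $C$ and $C^{\vee}$ (cf. the techniques underlying \cite[Theorem 3.1]{K} and \cite[Theorem 3.2.13]{EJ1}) to translate vanishing of $\Tor$ against $T^{\vee}$ into vanishing of $\Ext$ against $T$: concretely, $\Ext^i_R(M,T)^{\vee} \cong \Tor^R_i(M, T^{\vee})$, which vanishes for $i \geq 1$ because $M \in \mathcal{A}_{T^{\vee}}$ forces $\Tor^R_i(T^{\vee},M) = 0$. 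Hence $\Ext^i_R(M,T) = 0$ for all $i \geq 1$, and similarly $\Hom_R(M,T) \cong (M^{\vee} \otimes_R T^{\vee})^{\vee}$ up to the natural identifications, so $\Hom_R(M,T)$ is again Matlis reflexive.

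\textbf{Step 2.} For the second vanishing, apply the same dualization to $\Hom_R(M,T)$: one gets $\Ext^i_R(\Hom_R(M,T),T)^{\vee} \cong \Tor^R_i(\Hom_R(M,T)^{\vee}, T^{\vee}) \cong \Tor^R_i(T^{\vee} \otimes_R M^{\vee}, T^{\vee})$, which vanishes for $i \geq 1$ by the Bass-class condition $\Tor^R_i(T^{\vee}, \Hom_{R}(T^{\vee}, M^{\vee})) = 0$ combined with $M^{\vee} \in \mathcal{B}_{T^{\vee}}$ and the isomorphism $\Hom_R(T^{\vee}, M^{\vee}) \cong T^{\vee} \otimes_R M^{\vee}$-type identification coming from $M$ having finite projective dimension. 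Finally, the biduality map $\delta_{MTT}\colon M \to \Hom_R(\Hom_R(M,T),T)$ is identified, after applying Matlis duality, with the natural isomorphism $\theta_{T^{\vee}T^{\vee}M^{\vee}}\colon T^{\vee} \otimes_R \Hom_R(T^{\vee}, M^{\vee}) \to M^{\vee}$ from the Bass class $\mathcal{B}_{T^{\vee}}$; since $M$ and $\Hom_R(M,T)$ are Matlis reflexive, $\delta_{MTT}$ is an isomorphism if and only if its Matlis dual is, and that dual is $\theta_{T^{\vee}T^{\vee}M^{\vee}}$, which is an isomorphism. Therefore $M \in \mathcal{G}_T$.

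\textbf{The main obstacle} I anticipate is bookkeeping the natural maps carefully enough to see that the Matlis dual of $\delta_{MTT}$ really is (up to canonical isomorphism) the structure map $\theta_{T^{\vee}T^{\vee}M^{\vee}}$ of the Bass class, rather than merely an abstract isomorphism; this requires chasing the Hom-tensor adjunctions and the evaluation/biduality natural transformations through Matlis duality, using finiteness of $\pd_R M$ to commute the relevant $\Hom$ and $\otimes$ with Matlis duality (via \cite[Theorem 3.2.13]{EJ1} and Hom-tensor adjointness). Once that compatibility is pinned down, everything else is a formal consequence of $M \in \mathcal{A}_{T^{\vee}}$, equivalently $M^{\vee} \in \mathcal{B}_{T^{\vee}}$.
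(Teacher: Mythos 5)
Your overall strategy is the right one, and it is essentially the argument behind the result the paper relies on (the paper gives no proof of its own here; it simply cites \cite[Proposition 3.9]{K}): set $C=T^{\vee}$, which is semidualizing since $R$ is complete and $T$ is quasidualizing; observe that $\pd_R(M)<\infty$ forces $M\in\mathcal{A}_C$, equivalently $M^{\vee}\in\mathcal{B}_C$; and then push the Auslander/Bass-class conditions through Matlis duality to obtain the three conditions defining $\mathcal{G}_T$. Your identification of the Matlis dual of $\delta_{MTT}$ with the evaluation map $\theta_{CCM^{\vee}}$ is correct, and Matlis reflexivity of $M$ and of $\Hom_R(M,T)$ lets you descend that isomorphism back to $\delta_{MTT}$.

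However, several of your displayed identifications carry a spurious Matlis dual on $M$ and would fail as written. The correct adjunction identities are $\Hom_R(M,T)\cong\Hom_R(M,C^{\vee})\cong (C\otimes_R M)^{\vee}\cong\Hom_R(C,M^{\vee})$; your $(M^{\vee}\otimes_R C)^{\vee}$ is instead $\Hom_R(M^{\vee},T)$, a different module. Likewise, from $\Ext^i_R(X,T)\cong\Tor^R_i(X,C)^{\vee}$ one gets $\Ext^i_R(\Hom_R(M,T),T)^{\vee}\cong\Tor^R_i\big(C,\Hom_R(C,M^{\vee})\big)$, which vanishes for $i\geq 1$ directly by the Bass-class condition for $M^{\vee}\in\mathcal{B}_C$; your version $\Tor^R_i(\Hom_R(M,T)^{\vee},C)$ has an extra dual, and the asserted ``identification $\Hom_R(C,M^{\vee})\cong C\otimes_R M^{\vee}$ coming from $\pd_R(M)<\infty$'' is not a valid isomorphism and is not needed. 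With these corrections the argument closes cleanly: $\Ext^i_R(M,T)\cong\Tor^R_i(M,C)^{\vee}=0$ for $i\geq 1$ by the Auslander-class condition, the second vanishing is as above, and $\delta_{MTT}$ dualizes to $\theta_{CCM^{\vee}}$.
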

\begin{proof}
See \cite[Proposition 3.9]{K}.
\end{proof}
\section{Quasidualizing modules via semidualizing modules}
In this section, our aim is to generalize \cite[Theorem 3.1]{K}, which says that if $R$ is complete, then the Matlis dual functor provides an inverse bijection between the classes  $\mathfrak{S}_0(R)$ and $\mathfrak{Q}_0(R)$.
\begin{thm}
	Let $C$ be a semidualizing and $T , T'$ are quasidualizing $R$-modules.
	\begin{itemize}
		\item[(i)]{If $ T \in \mathcal{B}_C $, then $ \emph{\Hom}_R(C,T) \in \mathfrak{Q}_0(R)$.}
		\item[(ii)]{If $R$ is complete and $ T' \in \mathcal{B}_T $, then $ \emph{\Hom}_R(T',T) \in \mathfrak{S}_0(R)$.}
		\item[(iii)] {If $ T \in \mathcal{A}_C $, then $C \otimes_R T \in \mathfrak{Q}_0(R)$.}
	\end{itemize}
\end{thm}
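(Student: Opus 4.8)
The three statements are facets of one principle: use Matlis duality to pass into the category of finitely generated modules, where the calculus of semidualizing modules is available, and then return via \cite[Theorem~3.1]{K}. For parts (i) and (iii) one may assume without loss that $R$ is complete, since $\Hom_R(C,T)\cong\Hom_{\widehat R}(\widehat R\otimes_R C,T)$, $C\otimes_R T\cong(\widehat R\otimes_R C)\otimes_{\widehat R}T$, and the relevant $\Ext$ and $\Tor$ modules are unaffected by completion. The argument relies on three facts: (a) the functor $(-)^{\vee}$ interchanges the Auslander and Bass classes of a semidualizing module, so that $M\in\mathcal{A}_C$ if and only if $M^{\vee}\in\mathcal{B}_C$; (b) if $C$ is semidualizing and $B$ is a semidualizing module lying in $\mathcal{A}_C$ (respectively in $\mathcal{B}_C$), then $C\otimes_R B$ (respectively $\Hom_R(C,B)$) is again semidualizing; and (c) Kubik's bijection $\mathfrak{S}_0(R)\leftrightarrow\mathfrak{Q}_0(R)$ given by $(-)^{\vee}$ over a complete ring.

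For part (i), note first that $\Hom_R(C,T)$ is Artinian, being a submodule of $T^{\,\nu_R(C)}$. Hom-tensor adjointness together with $T\cong(T^{\vee})^{\vee}$ gives $\Hom_R(C,T)\cong\Hom_R\big(C,\Hom_R(T^{\vee},E(R/\fm))\big)\cong(C\otimes_R T^{\vee})^{\vee}$. Since $T\in\mathcal{B}_C$, fact (a) gives $T^{\vee}\in\mathcal{A}_C$, so $C\otimes_R T^{\vee}\in\mathfrak{S}_0(R)$ by (b), and then (c) yields $\Hom_R(C,T)\in\mathfrak{Q}_0(R)$. Part (iii) is dual: $C\otimes_R T$ is Artinian (a quotient of $T^{\,\nu_R(C)}$) and $(C\otimes_R T)^{\vee}\cong\Hom_R(C,T^{\vee})$; since $T\in\mathcal{A}_C$ forces $T^{\vee}\in\mathcal{B}_C$ by (a), fact (b) gives $\Hom_R(C,T^{\vee})\in\mathfrak{S}_0(R)$, and (c) then gives $C\otimes_R T\in\mathfrak{Q}_0(R)$. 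If one prefers to sidestep (b) and argue directly, one checks that $T\in\mathcal{B}_C$ yields $T\simeq C\utp M$ and $\uhom_R(C,T)\simeq M$ in $\D(R)$, where $M=\Hom_R(C,T)$, whence $\uhom_R(M,M)\simeq\uhom_R(C\utp M,T)\simeq\uhom_R(T,T)\simeq\widehat R$ is concentrated in degree $0$; reading off $\H^0$ and $\H^{>0}$ gives the homothety isomorphism and the vanishing of $\Ext^{i}_R(M,M)$ for $i>0$, and the analogue for (iii) runs through $\uhom_R(C\otimes_R T,C\otimes_R T)\simeq\uhom_R(T,\uhom_R(C,C\utp T))\simeq\uhom_R(T,T)\simeq\widehat R$.

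For part (ii), with $R$ complete, Matlis duality and adjointness give $\Hom_R(T',T)\cong\Hom_R(T^{\vee},{T'}^{\vee})$, where $T^{\vee},{T'}^{\vee}\in\mathfrak{S}_0(R)$ by (c); it therefore suffices to establish that ${T'}^{\vee}\in\mathcal{B}_{T^{\vee}}$, for then (b) finishes the proof. To obtain this one Matlis-dualizes the three defining conditions of $\mathcal{B}_T$ in turn, using $\Ext^{i}_R(T,T')^{\vee}\cong\Tor^R_i(T,{T'}^{\vee})$, the identification $\Hom_R(T,T')\cong\Hom_R({T'}^{\vee},T^{\vee})$ obtained by applying $(-)^{\vee}$ to a finite free presentation of $T^{\vee}$ (turning it into an injective copresentation of $T$), and the Matlis dual of the evaluation map $\theta_{TTT'}$. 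Alternatively one can first prove $T'\otimes_R T^{\vee}\in\mathfrak{Q}_0(R)$ by invoking part (iii) with the semidualizing module $T^{\vee}$ and the quasidualizing module $T'$ — which requires $T'\in\mathcal{A}_{T^{\vee}}$ — and then conclude via (c), since $\Hom_R(T',T)\cong(T'\otimes_R T^{\vee})^{\vee}$.

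I expect the real work to lie in two places. In every part, once one has the abstract isomorphism $\uhom_R(M,M)\simeq\widehat R$ (or its counterpart on Matlis duals), one still has to verify that its degree-zero component is the homothety map and not merely some isomorphism; this is a naturality diagram chase involving the adjunction, evaluation and biduality maps, routine but worth writing out. The more substantial point is in part (ii): passing from $T'\in\mathcal{B}_T$ to the reflexivity of ${T'}^{\vee}$ over $T^{\vee}$ requires carrying all three Bass-class conditions, together with the natural transformations they carry, through Matlis duality, and this bookkeeping is where the argument most needs care.
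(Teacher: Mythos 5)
For parts (i) and (iii) your argument is correct but takes a genuinely different route from the paper's. The paper verifies the two defining conditions of $\mathfrak{Q}_0(R)$ directly: a commutative diagram built from the Foxby-class isomorphism ($\theta_{CT}$, resp. $\omega_{CT}$) and Hom-tensor adjointness identifies the homothety map of $\Hom_R(C,T)$ (resp. of $C\otimes_RT$) with $\chi_{\widehat R}^{T}$, and a third-quadrant spectral sequence, which collapses because $\Tor^R_q(C,\Hom_R(C,T))=0$ (resp. $\Ext^q_R(C,C\otimes_RT)=0$), gives $\Ext^n_R(\Hom_R(C,T),\Hom_R(C,T))\cong\Ext^n_R(T,T)=0$. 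You instead push everything through Matlis duality into the semidualizing world: the identifications $\Hom_R(C,T)\cong(C\otimes_RT^{\vee})^{\vee}$ and $(C\otimes_RT)^{\vee}\cong\Hom_R(C,T^{\vee})$ are correct, the implications $T\in\mathcal{B}_C\Rightarrow T^{\vee}\in\mathcal{A}_C$ and $T\in\mathcal{A}_C\Rightarrow T^{\vee}\in\mathcal{B}_C$ are the standard (forward) directions of your fact (a), and your fact (b) is a known theorem citable from \cite{FW} or \cite{TW}. Since membership in $\mathfrak{Q}_0(R)$ is isomorphism-invariant, the naturality worry you raise at the end does not arise on this route (it would only matter for your derived-category variant). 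Your proof is shorter but leans on more external machinery; the paper's is self-contained apart from Kubik's bijection.

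Part (ii) contains a genuine gap, exactly where you predicted the work would be. Matlis-dualizing $T'\in\mathcal{B}_T$ does not yield $T'^{\vee}\in\mathcal{B}_{T^{\vee}}$, because the duality swaps the order of the arguments. Write $B=T^{\vee}$, $B'=T'^{\vee}$ (both semidualizing and Matlis reflexive). The condition $\Ext^i_R(T,T')=0$ dualizes to $\Ext^i_R(B',B)=0$, not to $\Ext^i_R(B,B')=0$; the evaluation isomorphism $T\otimes_R\Hom_R(T,T')\cong T'$ dualizes, via tensor-evaluation $B^{\vee}\otimes_R\Hom_R(B',B)\cong\Hom_R\big(\Hom_R(B',B),B\big)^{\vee}$, to the biduality $B'\cong\Hom_R(\Hom_R(B',B),B)$; and $\Tor^R_i(T,\Hom_R(T,T'))=0$ dualizes to $\Ext^i_R(\Hom_R(B',B),B)=0$. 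In other words $T'\in\mathcal{B}_T$ translates to $B'\in\mathcal{G}_{B}$, which is a different condition from $B'\in\mathcal{B}_B$ (for instance $R\in\mathcal{G}_B$ always holds, while $R\in\mathcal{B}_B$ forces $B\cong R$). What this correctly produces as a semidualizing module is therefore $\Hom_R(B',B)\cong\Hom_R(T,T')$ --- which is precisely what the paper's own ``similar to part (i)'' argument delivers, via the diagram built on $\theta_{TT'}:T\otimes_R\Hom_R(T,T')\to T'$ --- rather than $\Hom_R(B,B')\cong\Hom_R(T',T)$ as printed in the statement. Your fallback through part (iii) has the same defect: it needs $T'\in\mathcal{A}_{T^{\vee}}$, which you flag but do not derive and which likewise does not follow from $T'\in\mathcal{B}_T$. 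To repair the argument, either prove $B'\in\mathcal{G}_B$ as above and invoke the corresponding theorem that $\Hom_R(B',B)$ is then semidualizing, or run the paper's direct diagram-plus-spectral-sequence argument with $(C,T)$ replaced by $(T,T')$; in either case the module you certify is $\Hom_R(T,T')$.
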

\begin{proof}
(i)
By assumption, $ \theta_{CT} : C \otimes_R \Hom_R(C,T) \rightarrow T$ is an isomorphism. Now consider the following commutative diagram
\begin{displaymath}
\xymatrix{
	\widehat{R}  \ar@{->}[rr]^-{\chi_{\widehat{R}}^T} \ar@{=}[dd] &&  \Hom_R(T,T) \ar[d]^{\cong}_{\emph{Hom}(\theta_{CT},T)} \\
	&& \Hom_R\big(C \otimes_R \Hom_R(C,T) ,T\big) \ar[d]^{\cong} \\
	\widehat{R} \ar@{->}[rr]_-{\chi_{\widehat{R}}^{\emph{Hom}(C,T)}} &&  \Hom_R\big(\Hom_R(C,T) ,\Hom_R(C,T)\big)}
\end{displaymath}
in which the unlabeled isomorphism is Hom-tensor adjointness. Therefore, $ \chi_{\widehat{R}}^{\emph{Hom}(C,T)} $ must be an isomorphism. Next, consider the following third quadrant spectral sequence \cite[Theorem 10.62]{Ro},\\
\centerline{$ \E^{p,q}_2 = \Ext^p_R(\Tor^R_q(C,\Hom_R(C,T)) , T) \underset{p}\Longrightarrow \Ext^n_R(\Hom_R(C,T),\Hom_R(C,T)) $.}
But, since $ T \in \mathcal{B}_C $, we have $ \Tor^R_q(C,\Hom_R(C,T)) = 0 $ for all $ q > 0 $. So that $ \E^{p,q}_2 $ collapses on the 
$ p $-axis and we have the isomorphisms\\
\centerline{$  \Ext^n_R(\Hom_R(C,T),\Hom_R(C,T)) \cong \Ext^n_R(C \otimes_R \Hom_R(C,T),T) \cong \Ext^n_R(T,T) = 0 $,}
for all $ n \geq 0 $.

(ii). First, note that $ \Hom_R(T',T) $ is a finitely generated $R$-module by \cite[Lemma 2.1]{KLW1}. Now, the proof is similar to the part (i).

(iii) By assumption,  $ \omega_{CT} :  T \rightarrow \Hom_R(C,C \otimes_R T) $ is an isomorphism. Now consider the following commutative diagram
\begin{displaymath}
\xymatrix{
	\widehat{R}  \ar@{->}[rr]^-{\chi_{\widehat{R}}^T} \ar@{=}[dd] &&  \Hom_R(T,T) \ar[d]^{\cong}_{\emph{Hom}(T,\omega_{CT})} \\
	&& \Hom_R\big(T ,\Hom_R(C,C \otimes_R T) \big) \ar[d]^{\cong} \\
	\widehat{R} \ar@{->}[rr]_-{\chi_{\widehat{R}}^{C \otimes T}} &&  \Hom_R\big(C \otimes_R T , C \otimes_R T \big)}
	\end{displaymath}
in which the unlabeled isomorphism is Hom-tensor adjointness. Therefore, $ \chi_{\widehat{R}}^{C \otimes T} $ must be an isomorphism.
Next, consider the following third quadrant spectral sequence \cite[Theorem 10.64]{Ro},\\
\centerline{$ \E^{p,q}_2 = \Ext^p_R(T , \Ext^q_R(C , C\otimes_R T)) \underset{p}\Longrightarrow \Ext^n_R(C \otimes_R T,C \otimes_R T) $.}
But, since $ T \in \mathcal{A}_C $, we have $ \Ext^q_R(C , C\otimes_R T) = 0 $ for all $ q > 0 $. So that $ \E^{p,q}_2 $ collapses on the 
$ p $-axis and we have the isomorphisms\\
\centerline{$  \Ext^n_R(C \otimes_R T,C \otimes_R T) \cong \Ext^n_R(T,\Hom(C , C\otimes_R T)) \cong \Ext^n_R(T,T) = 0 $,}
for all $ n \geq 0 $.
\end{proof}
\begin{cor}
$($See \cite[Theorem 3.1]{K}$)$	Let $R$ be complete. Then $ (-)^{\vee} : \mathfrak{S}_0(R) \longrightarrow \mathfrak{Q}_0(R) $ and
$ (-)^{\vee} : \mathfrak{Q}_0(R) \longrightarrow \mathfrak{S}_0(R) $  are inverse bijections.
\end{cor}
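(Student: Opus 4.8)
The plan is to deduce Corollary 3.2 directly from Theorem 3.1 by checking that the relevant membership hypotheses are automatically satisfied when the auxiliary module is the Matlis-dual pairing. First I would recall Kubik's fact that for complete $R$ the functor $(-)^\vee = \Hom_R(-,E(R/\fm))$ sends $\mathfrak{S}_0(R)$ into $\mathfrak{Q}_0(R)$ and vice versa (this is part of \cite[Theorem 3.1]{K}, but can also be re-derived from Theorem 3.1 above). Then the whole point is that Matlis duality is an exact, faithful duality on the relevant subcategory, so the composite $(-)^{\vee\vee}$ is naturally isomorphic to the identity on modules that are Matlis reflexive; every semidualizing module is finitely generated, hence Matlis reflexive over the complete ring $R$, and every quasidualizing module is Artinian, hence Matlis reflexive as well. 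Therefore $(-)^\vee \circ (-)^\vee \cong \mathrm{id}$ on both $\mathfrak{S}_0(R)$ and $\mathfrak{Q}_0(R)$, which is exactly the assertion that the two maps are inverse bijections.

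Next I would make the connection with Theorem 3.1 explicit, since the corollary is phrased as a consequence of it. Take $T = E(R/\fm)$, which is quasidualizing. For a semidualizing $C$ one has $\id_R(E(R/\fm)) = 0 < \infty$, so $E(R/\fm) \in \mathcal{B}_C$ because $\mathcal{B}_C$ contains all modules of finite injective dimension; hence Theorem 3.1(i) gives $C^\vee = \Hom_R(C,E(R/\fm)) \in \mathfrak{Q}_0(R)$. In the other direction, given a quasidualizing $T'$, apply Theorem 3.1(ii) with the fixed module $T = E(R/\fm)$: one needs $T' \in \mathcal{B}_{E(R/\fm)}$, and this holds because the Bass class with respect to $E(R/\fm)$ consists precisely of the Matlis reflexive modules (or at least contains all Artinian modules), so $\Hom_R(T',E(R/\fm)) = (T')^\vee \in \mathfrak{S}_0(R)$. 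This shows both maps in the statement are well-defined with the claimed targets.

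Finally I would check that the two maps are mutually inverse. For $C \in \mathfrak{S}_0(R)$, the biduality map $C \to C^{\vee\vee}$ is an isomorphism because $C$ is finitely generated over the complete local ring $R$, hence Matlis reflexive; for $T \in \mathfrak{Q}_0(R)$, the biduality map $T \to T^{\vee\vee}$ is an isomorphism because $T$ is Artinian, hence Matlis reflexive. Combining this with the previous paragraph, $(-)^\vee$ restricts to mutually inverse bijections between $\mathfrak{S}_0(R)$ and $\mathfrak{Q}_0(R)$.

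I do not expect a serious obstacle here: the only point requiring a little care is making sure the Bass-class membership conditions needed to invoke Theorem 3.1 are genuinely free of charge, i.e.\ that $E(R/\fm)$ lies in $\mathcal{B}_C$ for every semidualizing $C$ (clear, since $\id_R E(R/\fm)=0$) and that every quasidualizing $T'$ lies in $\mathcal{B}_{E(R/\fm)}$ (clear from the description of $\mathcal{G}_{E(R/\fm)}$ as Matlis reflexive modules together with the standard adjointness computations). Everything else reduces to the classical fact that Matlis duality is a duality on the category of Matlis reflexive modules over a complete local ring.
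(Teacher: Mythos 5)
Your overall strategy is the same as the paper's: the paper's entire proof is ``in Theorem 3.1, set $T=T'=E(R/\fm)$'', and you correctly flesh out the direction $\mathfrak{S}_0(R)\to\mathfrak{Q}_0(R)$ (take $T=E(R/\fm)$, note $\id_R E(R/\fm)=0$ so $E(R/\fm)\in\mathcal{B}_C$, apply Theorem 3.1(i)) and the mutual-inverse statement (finitely generated and Artinian modules over a complete local ring are Matlis reflexive). Those parts are fine.

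The gap is in the direction $\mathfrak{Q}_0(R)\to\mathfrak{S}_0(R)$. To invoke Theorem 3.1(ii) with $T=E(R/\fm)$ you must verify $T'\in\mathcal{B}_{E(R/\fm)}$, and your justification for this is wrong on two counts. First, you conflate the Bass class $\mathcal{B}_{E(R/\fm)}$ with Kubik's class $\mathcal{G}_{E(R/\fm)}$ of derived $E(R/\fm)$-reflexive modules; these are defined by entirely different conditions (compare Definition 2.2), and it is $\mathcal{G}_{E(R/\fm)}$, not $\mathcal{B}_{E(R/\fm)}$, that is tied to Matlis reflexivity. Second, the fallback claim that $\mathcal{B}_{E(R/\fm)}$ ``at least contains all Artinian modules'' is false: for $M=k$ one has $\Hom_R(E(R/\fm),k)\cong\Hom_R(k,\Hom_R(E(R/\fm),E(R/\fm)))\cong\Hom_R(k,R)=\Soc(R)$, which vanishes whenever $\depth(R)>0$, so the evaluation map $E(R/\fm)\otimes_R\Hom_R(E(R/\fm),k)\to k$ cannot be an isomorphism and $k\notin\mathcal{B}_{E(R/\fm)}$. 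Checking that an arbitrary quasidualizing $T'$ lies in $\mathcal{B}_{E(R/\fm)}$ requires the nontrivial vanishings $\Ext_R^i(E(R/\fm),T')=0$ and $\Tor_i^R(E(R/\fm),\Hom_R(E(R/\fm),T'))=0$ for $i\geq 1$ together with the evaluation isomorphism, and establishing these essentially presupposes that $(T')^{\vee}$ is semidualizing --- i.e.\ the very statement being proved. So this half of your argument is circular as written; it needs either an independent verification of the Bass-class membership or (as the paper in effect does, via the parenthetical attribution) a direct appeal to Kubik's Theorem 3.1. To be fair, the paper's own one-line proof does not resolve this either: with $T=T'=E(R/\fm)$ its part (ii) only yields the trivial fact $E(R/\fm)^{\vee}\cong\widehat{R}\in\mathfrak{S}_0(R)$.
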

\begin{proof}
In the Theorem 3.1,	set $ T = T' = E(R/\fm) $.
\end{proof}
\begin{thm}
	Let $T$ be a quasidualizing $R$-module.
		\begin{itemize}
			\item[(i)]{$ T \otimes_R T \neq 0 $ if and only if $ \emph{\dim}(R) = 0 $.}
			\item[(ii)]{$ T \otimes_R T \in \mathfrak{Q}_0(R)$ if and only if $ T \cong R $.}
		\end{itemize}
\end{thm}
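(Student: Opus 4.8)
First I would pass to the completion and rewrite everything in terms of the semidualizing module $C:=T^{\vee}$. Since $T$ is Artinian it is canonically an $\widehat R$-module, it is quasidualizing over $R$ iff over $\widehat R$, and $T\otimes_R T\cong T\otimes_{\widehat R}T$; as $\dim$ and $\depth$ are unchanged by completion I may assume $R$ complete, so that $C:=T^{\vee}\in\mathfrak S_0(R)$ and $T\cong C^{\vee}$ by the Matlis bijection $(-)^{\vee}$ of Corollary~3.2. The basic identity I would record at the outset is Hom--tensor adjunction,
$$(T\otimes_R T)^{\vee}\;\cong\;\Hom_R\!\big(T,\Hom_R(T,E(k))\big)\;=\;\Hom_R(T,C),$$
so that, $(-)^{\vee}$ being exact and faithful, $T\otimes_R T\ne 0$ iff $\Hom_R(T,C)\ne 0$ (this reframes (i)), and — once we know $T\otimes_R T$ has finite length, hence is Matlis reflexive — $T\otimes_R T\in\mathfrak Q_0(R)$ iff $\Hom_R(T,C)\in\mathfrak S_0(R)$ (this will reframe (ii)).

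For (i), the next step is to locate $\Hom_R(T,C)$. Since $T$ is Artinian and $C$ is Noetherian, the image of any $R$-homomorphism $T\to C$ is both Artinian and Noetherian, hence of finite length, hence contained in $\Gamma_{\fm}(C)=\H^0_{\fm}(C)$; thus $\Hom_R(T,C)=\Hom_R(T,\Gamma_{\fm}(C))$, and one further dualization shows this module has finite length. Now $\Gamma_{\fm}(C)\ne 0$ iff $\depth_R(C)=0$, i.e. (Proposition~\ref{B0}(v)) iff $\depth(R)=0$. If $\dim R=0$ then $R$ is Artinian, $\Gamma_{\fm}(C)=C\ne 0$, and $\depth_R(C)=0$ gives $\Soc C=\Hom_R(k,C)\ne 0$; since also $T/\fm T\ne 0$ (because $\width_R(T)=\depth(R)=0$ by Proposition~\ref{B0}(v)), the inclusion $k\hookrightarrow C$ yields $0\ne\Hom_R(T,k)=\Hom_k(T/\fm T,k)\hookrightarrow\Hom_R(T,C)$, whence $T\otimes_R T\ne 0$. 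Conversely $T\otimes_R T\ne 0$ forces $\Hom_R(T,C)\ne 0$, hence $\Gamma_{\fm}(C)\ne 0$, hence $\depth(R)=0$.

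For (ii): if $T\cong R$ then $T$ Artinian forces $R$ Artinian and $T\otimes_R T\cong R\in\mathfrak Q_0(R)$. Conversely, suppose $T\otimes_R T\in\mathfrak Q_0(R)$. It is nonzero and, by the finite-length observation in (i), of finite length; hence $\widehat R\cong\Hom_R(T\otimes_R T,T\otimes_R T)$ is of finite length, i.e. $R$ is Artinian. Over an Artinian ring $\mathfrak S_0(R)=\mathfrak Q_0(R)$ — both classes consist of the finitely generated $M$ with $R\xrightarrow{\ \sim\ }\Hom_R(M,M)$ and $\Ext^{>0}_R(M,M)=0$ — so $T$ and $T\otimes_R T$ are both \emph{semidualizing}. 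It then remains to prove the purely semidualizing statement that a semidualizing module $C$ with $C\otimes_R C$ again semidualizing must be free. I would do this by showing $C$ lies in its own Auslander class $\mathcal A_C$: up to the adjunction isomorphism the homothety $\chi^{C\otimes C}_R$ equals $\Hom_R(C,\omega_{CCC})\circ\chi^C_R$, so $\Hom_R(C,\omega_{CCC})$ is an isomorphism; a short diagram chase — using $\Ext^1_R(C,C)=0$ and that over an Artinian ring $\Hom_R(C,N)\ne 0$ for every nonzero $N$ (since $C$ surjects onto $k$ and $N$ has nonzero socle) — then forces $\omega_{CCC}\colon C\to\Hom_R(C,C\otimes_R C)$ to be an isomorphism, the remaining vanishing of $\Tor$ and $\Ext$ being handled in the same spirit; finally, a semidualizing module in its own Auslander class has finite projective dimension and is therefore free. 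Granting this, $T\cong R$.

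The step I expect to be the real obstacle is exactly this last one — proving, over the Artinian ring, that the semidualizing $T$ with $T\otimes_R T$ semidualizing is free. Everything before it is bookkeeping with Matlis duality and Proposition~\ref{B0}; the content of (ii), and the place where I would most want to draw on the existing theory of tensor products of semidualizing modules, lives there.
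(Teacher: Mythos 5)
Your part (ii) follows the same route as the paper: reduce to an Artinian ring, where quasidualizing and semidualizing coincide, and then invoke the rigidity of semidualizing modules under self-tensor products. Your way of getting $\dim(R)=0$ --- from the fact that $\widehat{R}\cong\Hom_R(T\otimes_RT,T\otimes_RT)$ has finite length --- is fine and is actually more self-contained than the paper's appeal to part (i). The gap is in the last step. The assertion that a semidualizing $C$ with $C\otimes_RC$ semidualizing must be free is exactly \cite[Theorem 3.2]{FW}, which the paper simply cites; your sketch (show $\omega_{CCC}$ is an isomorphism by a diagram chase, with ``the remaining vanishing of $\Tor$ and $\Ext$ handled in the same spirit'') is not a proof. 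Deducing $\Tor_i^R(C,C)=0$ and $\Ext_R^i(C,C\otimes_RC)=0$ for $i>0$ from the hypothesis is the entire content of that theorem, and the known arguments run through comparisons of Poincar\'e and Bass series, not a diagram chase. You correctly identified this as the obstacle; cite the result rather than attempt to reprove it.

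Part (i) is where the real discrepancy lies. Your argument establishes that $T\otimes_RT\neq0$ if and only if $\depth(R)=0$: the forward direction yields $\Gamma_{\fm}(C)\neq0$ and hence $\depth(R)=0$, nothing more, and this cannot be upgraded to $\dim(R)=0$ --- the two conditions genuinely differ, and your version is the correct one. Indeed, by Proposition 2.4(v) one has $\width_R(T)=\depth(R)$, so if $\depth(R)=0$ then $T/\fm T\neq0$ and $T\otimes_RT$ surjects onto the nonzero $k$-vector space $(T/\fm T)\otimes_k(T/\fm T)$; applying this with $T=E(R/\fm)$ over a ring with $\depth(R)=0<\dim(R)$, for instance $R=k[[x,y]]/(x^2,xy)$, gives $E(R/\fm)\otimes_RE(R/\fm)\neq0$ while $\dim(R)=1$. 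So, measured against the printed statement, your forward implication has a gap at ``$\depth(R)=0\Rightarrow\dim(R)=0$''; but the defect is in the statement itself, and in the paper's own proof, which asserts that $(T\otimes_RT)^{\vee}$ is semidualizing --- a consequence of \cite[Theorem 3.1]{K} only when $T\otimes_RT$ is already known to be quasidualizing, i.e.\ under the hypothesis of (ii), not of (i). Part (i) (and likewise Corollary 3.4(i)) should be stated with $\depth$ in place of $\dim$; the two agree exactly when the relevant local ring is Cohen--Macaulay.
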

\begin{proof}
(i). Assume that $ T \otimes_R T \neq 0 $. By \cite[Corollary 7.4]{KLW}, $ \ell_R(T \otimes_R T) < \infty $ and then, by \cite[Theorem 3.1]{K}, $ (T \otimes_R T)^{\vee} $ is a semidualizing $\widehat{R}$-module of finite length. It follows, by Proposition 2.4(i), that
$ \dim(R) = \dim(\widehat{R}) = \dim_{\widehat{R}}((T \otimes_R T)^{\vee}) = 0 $, as wanted. Conversely, assume that $\dim(R) = 0$. Then, $R$ is complete with respect to $\fm$-adic topology and that $T$ is noetherian. Thus, in particular, $T$ is semidualizing and so by Proposition 2.4(ii), we have
 	$ T \otimes_R T \neq 0 $.
 	
(ii). Assume that $ T \otimes_R T $ is quasidualizing. Then, by (i), $\dim(R) = 0$. Then, $R$ is complete with respect to $\fm$-adic topology. Also, by \cite[Theorem 3.1]{K}, $ T \otimes_R T $ is semidualizing. Hence, by \cite[Theorem 3.2]{FW}, $T \cong R$. The converse is evident.
\end{proof}
\begin{cor}
$ ( $See \cite{EJ2}$ ) $	Assume that $\fp$ is a prime ideal of a $ ( $not necessarily local$ ) $ ring $R$. 
			\begin{itemize}
				\item[(i)]{$ E(R/\fp) \otimes_R E(R/\fp) \neq 0 $ if and only if $ \emph{\dim}(R_{\fp}) = 0 $.}
				\item[(ii)]{$ E(R/\fp) \otimes_R E(R/\fp) \cong E(R/\fp)$ if and only if $R_{\fp}$ is Artinian Gorenstein.}
			\end{itemize}
\end{cor}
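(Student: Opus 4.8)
The plan is to reduce everything at once to the local ring $R_{\fp}$ and then quote Theorem 3.3. First I would recall the standard fact that $E(R/\fp)=E_R(R/\fp)$ carries a natural structure of $R_{\fp}$-module, and that as such $E_R(R/\fp)\cong E_{R_{\fp}}(R_{\fp}/\fp R_{\fp})$, the injective hull of the residue field $k$ of the Noetherian local ring $(R_{\fp},\fp R_{\fp},k)$; moreover this module is Artinian over $R_{\fp}$, so it is a quasidualizing $R_{\fp}$-module, which I will denote $T:=E_{R_{\fp}}(k)\in\mathfrak{Q}_0(R_{\fp})$. Since both tensor factors are $R_{\fp}$-modules, there is a natural isomorphism $E(R/\fp)\otimes_R E(R/\fp)\cong T\otimes_{R_{\fp}}T$, so both assertions may be checked over $R_{\fp}$ with this $T$.

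For (i) I would simply apply Theorem 3.3(i) to the quasidualizing $R_{\fp}$-module $T$: one gets $T\otimes_{R_{\fp}}T\neq 0$ if and only if $\dim(R_{\fp})=0$, which is precisely the claim after the translation above.

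For (ii), suppose first that $E(R/\fp)\otimes_R E(R/\fp)\cong E(R/\fp)$, i.e. $T\otimes_{R_{\fp}}T\cong T$. Since $T\in\mathfrak{Q}_0(R_{\fp})$, this isomorphism shows $T\otimes_{R_{\fp}}T\in\mathfrak{Q}_0(R_{\fp})$, so Theorem 3.3(ii) forces $T\cong R_{\fp}$, that is, $R_{\fp}\cong E_{R_{\fp}}(k)$. By part (i) (with $E\neq 0$) this forces $\dim(R_{\fp})=0$, so $R_{\fp}$ is Artinian, and $R_{\fp}\cong E_{R_{\fp}}(k)$ exactly says that $R_{\fp}$ is self-injective, i.e. Artinian Gorenstein. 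Conversely, if $R_{\fp}$ is Artinian Gorenstein then $R_{\fp}\cong E_{R_{\fp}}(k)=T$, whence $T\otimes_{R_{\fp}}T\cong R_{\fp}\otimes_{R_{\fp}}R_{\fp}\cong R_{\fp}\cong T$, which reads $E(R/\fp)\otimes_R E(R/\fp)\cong E(R/\fp)$.

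I do not expect any real obstacle here; the only points that need care are the identification $E_R(R/\fp)\cong E_{R_{\fp}}(k)$ as $R_{\fp}$-modules together with the Artinian-ness of this module over $R_{\fp}$ (so that Theorem 3.3 genuinely applies to it), and the routine characterization of Artinian Gorenstein local rings as exactly those local rings $(S,\fn)$ with $S\cong E_S(S/\fn)$.
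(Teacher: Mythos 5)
Your proposal is correct and follows essentially the same route as the paper: both reduce to $R_{\fp}$, observe that $E(R/\fp)$ is quasidualizing over $R_{\fp}$, and then invoke Theorem 3.3 for both parts. The extra care you take with the identifications $E_R(R/\fp)\cong E_{R_{\fp}}(k)$ and $E(R/\fp)\otimes_R E(R/\fp)\cong T\otimes_{R_{\fp}}T$ is left implicit in the paper but is exactly the right justification.
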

\begin{proof}
	(i). Note that $ E(R/\fp) $ is a quasidualizing $ R_{\fp} $-module. Now the result follows from Theorem 3.3(i)
	
	(ii). If  $ R_{\fp} $ is Artinian Gorenstein, then $ R_{\fp} \cong E(R/\fp) $ and so $ E(R/\fp) \otimes_R E(R/\fp) \cong E(R/\fp) $ is quasidualizing. Conversely, if $ E(R/\fp) \otimes_R E(R/\fp) \cong E(R/\fp)$, then since $ E(R/\fp) $ is quasidualizing, by Theorem 3.3(ii), we have $ R_{\fp} \cong E(R/\fp) $, whence $R_{\fp}$ is Artinian Gorenstein.
\end{proof}

\section{Existence and uniqueness}
\begin{prop}
	Let $T$ be a quasidualizing $R$-module.
	\begin{itemize}
		\item[(i)]{If $\emph{\id}_R(T) < \infty$, then $T \cong E(R/\fm)$.}
		\item[(ii)] {If $\emph{\pd}_R(T) < \infty$, then $\emph{\pd}_R(T) = \emph{\dim}(R)$.}
	\end{itemize}
\end{prop}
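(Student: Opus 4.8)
The plan is to move the whole problem to the semidualizing side by Matlis duality. As in the proof of Proposition~\ref{B0}, one may assume $R$ is complete (the two conclusions are insensitive to completion: $\widehat R\otimes_R T\cong T$ since $T$ is Artinian, $T$ stays quasidualizing over $\widehat R$, and finite injective, resp.\ projective, dimension is preserved). So assume $R$ is complete and put $d=\dim(R)$. Then $T^{\vee}$ is a \emph{finitely generated} $R$-module, $T^{\vee\vee}\cong T$, and $T^{\vee}\in\mathfrak{S}_0(R)$ by \cite[Theorem~3.1]{K}. From the natural isomorphisms $\Tor^R_i(N,T)^{\vee}\cong\Ext^i_R(N,T^{\vee})$ (valid for finitely generated $N$) one reads off $\fd_R(T)=\id_R(T^{\vee})$; applying this with $T^{\vee}$ in place of $T$ and using that $T^{\vee}$ is finitely generated, one also gets $\pd_R(T^{\vee})=\fd_R(T^{\vee})=\id_R(T^{\vee\vee})=\id_R(T)$. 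These are the two identities I would keep using.

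For (i): if $\id_R(T)<\infty$ then $\pd_R(T^{\vee})=\id_R(T)<\infty$, so $T^{\vee}$ is a semidualizing module of finite projective dimension. By Proposition~\ref{B0}(v), $\depth_R(T^{\vee})=\depth(R)$, so Auslander--Buchsbaum forces $\pd_R(T^{\vee})=0$; hence $T^{\vee}$ is free, and the homothety isomorphism $R\cong\Hom_R(T^{\vee},T^{\vee})$ forces it to have rank one, i.e.\ $T^{\vee}\cong R$. Therefore $T\cong T^{\vee\vee}\cong R^{\vee}=E(R/\fm)$.

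For (ii): if $\pd_R(T)<\infty$ then $\id_R(T^{\vee})=\fd_R(T)\le\pd_R(T)<\infty$, so $T^{\vee}$ is a semidualizing module of finite injective dimension, i.e.\ a dualizing module; hence $R$ is Cohen--Macaulay, so $\depth(R)=d$ and $\id_R(T^{\vee})=\depth(R)=d$ by a theorem of Bass. Thus $\fd_R(T)=d$, which already gives $\pd_R(T)\ge\fd_R(T)=d$ (equivalently $\pd_R(T)\ge\width_R(T)=\depth(R)=d$ by Proposition~\ref{B0}(v), since then $\Tor^R_d(k,T)\ne0$). The remaining inequality $\pd_R(T)\le d$ is, I expect, the only genuine difficulty, because once $d>0$ the module $T$ is not finitely generated and the ordinary Auslander--Buchsbaum formula is unavailable. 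I would close it in one of two ways. First option: invoke the theorem of Gruson--Raynaud (or Jensen) that the big finitistic projective dimension of a Noetherian local ring equals its depth, giving $\pd_R(T)\le\depth(R)=d$ at once. Second, more self-contained, option: since $T^{\vee}$ is the canonical module $\omega_R$, local duality identifies $T\cong T^{\vee\vee}\cong\omega_R^{\vee}\cong\H^d_{\fm}(R)$; taking a maximal $R$-regular system of parameters $x_1,\dots,x_d$, the \v{C}ech complex yields an exact sequence $0\to R\to\check C^{1}\to\cdots\to\check C^{d}\to\H^d_{\fm}(R)\to0$ in which each $\check C^{k}$ with $k\ge1$ is a finite direct sum of localizations $R_f$ ($f$ a non-zerodivisor), hence of projective dimension $\le1$; a dimension shift along this sequence gives $\pd_R(\H^d_{\fm}(R))\le d$. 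Either way $\pd_R(T)=d=\dim(R)$, as claimed.
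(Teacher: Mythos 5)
Your argument is correct and follows the paper's own route: part (i) is the same Matlis-duality/Auslander--Buchsbaum argument (the paper gets rank one from indecomposability, Proposition 2.4(vi), rather than from the homothety isomorphism), and for part (ii) the paper likewise obtains $\dim(\widehat R)=\id_{\widehat R}(T^{\vee})\le\pd_R(T)\le\dim(R)$, citing \cite[Corollary 3.2.7]{GR} for the upper bound. One small correction: the Raynaud--Gruson/Jensen theorem bounds the big finitistic projective dimension by $\dim(R)$, not by $\depth(R)$ (the equality with depth is Bass's theorem for finitely generated modules); this is harmless here since $R$ is Cohen--Macaulay, and your \v{C}ech-complex alternative gives $\pd_R(T)\le d$ directly in any case.
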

\begin{proof}
	(i).
	Note that $T^{\vee}$ is a semidualizing $ \widehat{R} $-module by \cite[Theorem 3.1]{K}. Also,  note that 
	$ \pd_{\widehat{R}}(T^{\vee}) < \infty $. Hence, by Proposition 2.4(v) and the Auslander-Buchsbaum formula, we have 
	$ \pd_{\widehat{R}}(T^{\vee}) = 0 $, whence $T^{\vee}$ is a finitely generated free $ \widehat{R} $-module. But then
	$ T^{\vee\vee} \cong E(R/\fm)^t $ for some $t \in \mathbb{N}$. Since $T$ is a Matlis reflexive $ \widehat{R} $-module, we have
	$ T \cong E(R/\fm)^t $. Now, by Proposition 2.4(vi), we have  $T \cong E(R/\fm)$.
	
	(ii).
	In view of \cite[Theorem 3.1]{K}, $ T^{\vee} $ must be dualizing for $ \widehat{R} $. Now, there are (in)equalities\\
	\centerline{$ \dim(\widehat{R}) = \id_{\widehat{R}}(T^{\vee}) \leq \pd_R(T) \leq \dim(R), $}
	in which the last inequality follows from \cite[Corollary 3.2.7]{GR}. 
\end{proof}
\begin{rem}
	\emph{In \cite[Theorem 3.20]{RT}, the authors proved that a $d$-dimensional local ring $R$ is Gorenstein if and only if there exists a $d$-perfect quasidualizing $R$-module with 1-dimensional socle. Then, in \cite[Question 3.22]{RT}, they asked that can one omit the condition "1-dimensional socle"? The answer is negative as the following example shows. Let $R$ be a complete CM ring that is not Gorenstein. Let $ \omega_R $ be the dualizing module of $R$. Then we have $  \nu_R(\omega_R) \geq 2 $, since otherwise $ \omega_R $ must be cyclic, and hence $ R \cong R/\Ann_R(\omega_R)  \cong \omega_R $; so that $R$ is Gorenstein which is impossible. Now, by \cite[Theorem 3.19]{RT} and \cite[Theorem 3.1]{K},
		$ \omega_R^{\vee} $ is a $d$-perfect quasidualizing $R$-module and that\\
		\centerline{$ \vdim_k(\Soc(\omega_R^{\vee}))  = \vdim_k(k \otimes_R \omega_R) \geq 2 $.}
		An explicit example is $ R = k[[X,Y,Z]]/(X^2,XY,Y^2) $.}
\end{rem}
\begin{defn}
	\emph{Let $T$ be a quasidualizing $R$-module. We say that $T$ is a codualizing $R$-module if 
		$\pd_R(T) < \infty$.}
\end{defn}
\begin{exam}
	\emph{	If $R$ is Artinian, then it is codualizing. Also, if $R$ is Gorenstein, then $E(R/\fm)$ is codualizing. }
\end{exam}
\begin{rem}
	\emph{	Note that if there exists a codualizing $R$-module, then $R$ must be Cohen-Macaulay. Indeed, if $T$ is codualizing over $R$, then it is codualizing over $ \widehat{R} $. Now, by \cite[Theorem 3.1]{K}, there exists a semidualizing  $ \widehat{R} $-module $K$ such that
		$ T = \Hom_{\widehat{R}}(K,E(R/\fm)) $. Now, one has $ \id_{\widehat{R}}(K) \leq \pd_{\widehat{R}}(T) < \infty $. Thus, we have
		$ \dim(\widehat{R}) \leq \id_{\widehat{R}}(K) = \depth({\widehat{R}}) $, whence $ \widehat{R} $ and so $R$ is CM. Also, if $R$ is complete then $T$ is codualizing if and only if $T^{\vee}$ is dualizing by \cite[Theorem 3.1]{K} and the fact that 
		$ \pd_R(T) , \id_R(T^{\vee})$ are simultaneously finite. In particular, $R$ is Gorenstein if and only if $ E(R/\fm) $ is codualizing.}
\end{rem}
\begin{lem}
	Let $x \in \fm$ be $R$-regular, $C \in \mathfrak{S}_0(R)$ and $T \in \mathfrak{Q}_0(R)$.
	\begin{itemize}
		\item[(i)]{$C$ is dualizing for $R$ if and only if $C/xC$ is dualizing for $R/xR$.}
		\item[(ii)]{$T$ is codualizing for $R$ if and only if $\emph{\Hom}_R(R/xR,T)$ is codualizing for $R/xR$.}
	\end{itemize}
\end{lem}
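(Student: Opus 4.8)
The plan is to keep track only of the finiteness of an injective (resp.\ projective) dimension, since Proposition 2.4(iv) already guarantees that $C/xC$ is semidualizing for $R/xR$ and that $\Hom_R(R/xR,T)$ is quasidualizing for $R/xR$; thus only a single homological finiteness condition has to be propagated. For part (i) I would argue directly with $\Ext_R(k,-)$, and for part (ii) I would transport the question back to part (i) by Matlis duality.

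For part (i), since $\Ass_R(C)=\Ass(R)$ by Proposition 2.4(i), the $R$-regular element $x$ is $C$-regular, so there is a short exact sequence $0 \longrightarrow C \overset{x}{\longrightarrow} C \longrightarrow C/xC \longrightarrow 0$. Because $x\in\fm$ annihilates every $\Ext_R^i(k,C)$, all connecting maps in the associated long exact $\Ext_R(k,-)$-sequence are trivially controlled and the sequence breaks into short exact sequences
\[ 0 \longrightarrow \Ext_R^i(k,C) \longrightarrow \Ext_R^i(k,C/xC) \longrightarrow \Ext_R^{i+1}(k,C) \longrightarrow 0 \qquad (i\geq 0). \]
Hence $\Ext_R^i(k,C)=0$ for $i\gg 0$ if and only if $\Ext_R^i(k,C/xC)=0$ for $i\gg 0$, and since $C$ and $C/xC$ are finitely generated this says exactly that $\id_R(C)<\infty$ if and only if $\id_R(C/xC)<\infty$. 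Finally, as $C/xC$ is an $R/xR$-module and $x$ is $R$-regular, the standard change-of-rings identity $\id_R(C/xC)=\id_{R/xR}(C/xC)+1$ shows that $\id_R(C/xC)<\infty$ if and only if $\id_{R/xR}(C/xC)<\infty$; combined with Proposition 2.4(iv) this is the equivalence in (i).

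For part (ii), as in Remark 4.5 we may assume $R$ is complete, so that $R/xR$ is complete with $\widehat{R/xR}=\widehat{R}/x\widehat{R}$. Put $M:=T^{\vee}$. By \cite[Theorem 3.1]{K} we have $M\in\mathfrak{S}_0(R)$ and $T\cong M^{\vee}$, and by Remark 4.5, $T$ is codualizing for $R$ if and only if $M$ is dualizing for $R$. Hom-tensor adjointness, together with the standard fact that $E_{R/xR}(k)=\Hom_R(R/xR,E(R/\fm))$, yields
\[ \Hom_R(R/xR,T) \;\cong\; \Hom_R\bigl(R/xR,\Hom_R(M,E(R/\fm))\bigr) \;\cong\; \Hom_R\bigl(M/xM,E(R/\fm)\bigr), \]
which is precisely the Matlis dual of the $R/xR$-module $M/xM$ taken over $R/xR$. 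Since $M/xM\in\mathfrak{S}_0(R/xR)$ by Proposition 2.4(iv), applying Remark 4.5 over the complete ring $R/xR$ shows that $\Hom_R(R/xR,T)$ is codualizing for $R/xR$ if and only if $M/xM$ is dualizing for $R/xR$. Combining this with part (i) applied to $M$ — which gives that $M$ is dualizing for $R$ if and only if $M/xM$ is dualizing for $R/xR$ — we conclude that $T$ is codualizing for $R$ if and only if $\Hom_R(R/xR,T)$ is codualizing for $R/xR$.

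I do not anticipate a serious obstacle; the argument is an assembly of Proposition 2.4(iv), the change-of-rings identity for injective dimension, and the codualizing–dualizing dictionary of Remark 4.5. The two points needing a little care are, in (i), the decomposition of the long exact sequence, which rests on $x$ acting as zero on $\Ext_R^{\bullet}(k,C)$, and, in (ii), the fact that Matlis duality over $R/xR$ is taken against $\Hom_R(R/xR,E(R/\fm))$ rather than $E(R/\fm)$ itself, so that the adjointness computation really does land in the correct module category.
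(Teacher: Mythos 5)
Your argument is correct, but for part (ii) it follows a genuinely different route from the paper's. The paper never passes through Matlis duality in the proof of (ii): it takes the coregular sequence $0 \to \Hom_R(R/xR,T) \to T \xrightarrow{\,x\,} T \to 0$ from Proposition 2.4(iii), applies $\Hom_R(-,N)$ for finitely generated $N$, invokes the finite generation of the resulting $\Ext$ modules (from Kubik--Leamer--Sather-Wagstaff) so that Nakayama's lemma converts ``$x$ acts surjectively on $\Ext^i_R(T,N)$ for $i\gg 0$'' into vanishing, and finishes with the change-of-rings equality $\pd_{R/xR}(\Hom_R(R/xR,T))=\pd_R(\Hom_R(R/xR,T))-1$. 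You instead dualize everything: writing $T=M^{\vee}$ with $M=T^{\vee}$ semidualizing, identifying $\Hom_R(R/xR,T)$ with the $R/xR$-Matlis dual of $M/xM$, and then quoting the dictionary of Remark 4.5 over the complete rings $R$ and $R/xR$ to reduce (ii) entirely to (i). This is clean and has the merit of actually proving (i), which the paper dismisses as well known, and of avoiding the finiteness results on $\Ext$ of Artinian modules; the cost is that your reduction leans on Matlis reflexivity of $M/xM$ over the complete ring $R/xR$ and on the identification $E_{R/xR}(k)=\Hom_R(R/xR,E(R/\fm))$, both of which you correctly flag, whereas the paper's argument works directly with projective dimension. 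Both proofs make the same unjustified (but standard, and shared with Remark 4.5) reduction to the complete case, so I see no gap.
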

\begin{proof}
	The part (i) is well-known. For (ii), we can assume that $R$ is complete. We have to show that $ \pd_R(T) < \infty $ if and only if $ \pd_{R/xR}(\Hom_R(R/xR,T)) < \infty  $. By Proposition 2.4(iii), there exists an exact sequence\\
	\centerline{$ 0 \longrightarrow \Hom_R(R/xR,T) \longrightarrow T \overset{x}\longrightarrow T \longrightarrow 0 $,}
	which, for any finitely generated $R$-module $N$, induces a long exact sequence \\
\centerline{$ \cdots \overset{x}\rightarrow \Ext^{i-1}_R(T,N) \rightarrow \Ext^i_R(T,\Hom_R(R/xR,N)) \rightarrow \Ext^i_R(T,N) \overset{x}\rightarrow \Ext^i_R(T,N) \rightarrow \cdots $.}
Note that all terms in the above long exact sequence are finitely generated by \cite[Theorem 2.2]{KLW1}.
Now, a simple use of Nakayama's lemma shows that $ \pd_R(T) < \infty$ if and only if $ \pd_R(\Hom_R(R/xR,T)) < \infty $. Finally, since\\
	\centerline{ $ \pd_{R/xR}(\Hom_R(R/xR,T))  = \pd_R(\Hom_R(R/xR,T)) - 1 $,}
	 we are done.
\end{proof}
\begin{lem}
Let $M$ and $N$ be two non-zero artinian $R$-modules. Let $x \in \fm$ be $R$-regular and $M$-coregular. Then a homomorphism
$ \varphi : M \longrightarrow N $ is an isomorphism if and only if the induced homomorphism\\
\centerline{$ \emph{\Hom}_R(R/xR,\varphi) : \emph{\Hom}_R(R/xR,M) \longrightarrow \emph{\Hom}_R(R/xR,N) $}
is an isomorphism. 	
\end{lem}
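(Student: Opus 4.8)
The plan is to work throughout with the identification $\Hom_R(R/xR,M)\cong(0:_M x)$, the $x$-torsion submodule of $M$, and likewise for $N$; under this identification $\Hom_R(R/xR,\varphi)$ is nothing but the restriction $\varphi|_{(0:_M x)}\colon(0:_M x)\to(0:_N x)$. The ``only if'' direction is then immediate from functoriality of $\Hom_R(R/xR,-)$, so the content is entirely in the converse: assuming $\Hom_R(R/xR,\varphi)$ is an isomorphism, I would establish injectivity and surjectivity of $\varphi$ separately.

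For injectivity I argue by contradiction. If $\Ker\varphi\neq 0$, then $\Ker\varphi$ is a nonzero submodule of the artinian module $M$, hence has nonzero socle $\Soc(\Ker\varphi)=(0:_{\Ker\varphi}\fm)$. Since $x\in\fm$, this socle is contained in $(0:_M x)$, and it is obviously contained in $\Ker\varphi$, so $(0:_M x)\cap\Ker\varphi\neq 0$. But under the identification above this intersection is exactly $\Ker\big(\Hom_R(R/xR,\varphi)\big)$, contradicting injectivity of $\Hom_R(R/xR,\varphi)$. Hence $\varphi$ is injective.

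For surjectivity, set $C=\coker\varphi$, an artinian $R$-module, and apply $\Hom_R(R/xR,-)$ to the short exact sequence $0\to M\overset{\varphi}\to N\to C\to 0$. The resulting long exact sequence contains
\[
0\to\Hom_R(R/xR,M)\to\Hom_R(R/xR,N)\to\Hom_R(R/xR,C)\to\Ext^1_R(R/xR,M).
\]
The crucial computation is $\Ext^1_R(R/xR,M)\cong M/xM$: here $R$-regularity of $x$ is used, via the free resolution $0\to R\overset{x}\to R\to R/xR\to 0$, and then $M/xM=0$ because $x$ is $M$-coregular. Thus $\Hom_R(R/xR,N)\to\Hom_R(R/xR,C)$ is onto; since by hypothesis the first map $\Hom_R(R/xR,M)\to\Hom_R(R/xR,N)$ is an isomorphism, exactness forces $\Hom_R(R/xR,C)=(0:_C x)=0$. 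If $C$ were nonzero it would, being artinian, have nonzero socle, which lies in $(0:_C x)$ because $x\in\fm$ — a contradiction. Therefore $C=0$, $\varphi$ is surjective, and hence an isomorphism.

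I do not expect a genuine obstacle here. The two points that need a little care are the vanishing $\Ext^1_R(R/xR,M)=M/xM=0$, which is precisely where both hypotheses on $x$ (being $R$-regular and $M$-coregular) are invoked, and the standard fact that a nonzero artinian module over the local ring $(R,\fm)$ has nonzero socle, which is what powers both the injectivity and the surjectivity steps.
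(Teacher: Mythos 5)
Your proof is correct and follows essentially the same route as the paper: both reduce the cokernel step to the vanishing $\Ext^1_R(R/xR,M)\cong M/xM=0$ (using that $x$ is $R$-regular and $M$-coregular) and both reduce everything to showing that an artinian module with zero $x$-torsion vanishes. The only difference is in that last elementary point, where you invoke the nonvanishing of the socle of a nonzero artinian module while the paper writes the module as $\bigcup_{i}\big(0:x^i\big)$ and runs an induction; your version is a touch cleaner but not a different argument.
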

\begin{proof}
The necessity is trivial. Assume that $ \Hom_R(R/xR,\varphi) $ is an isomorphism. The exact sequence\\
\centerline{$ 0 \longrightarrow \ker(\varphi) \longrightarrow M \longrightarrow N$,}
induces an exact sequence \\
	\centerline{$ 0 \rightarrow \Hom_R(R/xR,\ker(\varphi)) \rightarrow \Hom_R(R/xR,M) \rightarrow \Hom_R(R/xR,N)$,}
	from which we conclude that $\big( 0 \underset{\ker(\varphi)}: x \big) = \Hom_R(R/xR,\ker(\varphi))  = 0$. In the following, we use induction on $i$ to show
	 that $\big( 0 \underset{\ker(\varphi)}: x^i \big) = 0 $ for all $ i \geq 1 $. Suppose, on the contrary, that 
	 $\big( 0 \underset{\ker(\varphi)}: x^i \big) \neq 0 $. Consider $ 0 \neq m \in \big( 0 \underset{\ker(\varphi)} : x^i \big) $. Then, we have 
	 $ x^im=0 $ and so $ x^{i-1}m \in \big( 0 \underset{\ker(\varphi)}: x \big)= 0 $ which implies that $ m \in \big( 0 \underset{\ker(\varphi)}: x^{i-1} \big) = 0 $, a contradiction.
	   Now, since $ \ker(\varphi) $ is artinian, we have $ \ker(\varphi) = \underset{i \geq 1}\bigcup \big(0 \underset{\ker(\varphi)}: x^i \big) = 0 $. Next, consider the exact sequence \\
	\centerline{$ 0  \longrightarrow M \longrightarrow N \longrightarrow \coker(\varphi) \longrightarrow 0$,}
	to induce an exact sequence \\
	\centerline{$ \Hom_R(R/xR,M) \rightarrow \Hom_R(R/xR,N) \rightarrow \Hom_R(R/xR,\coker(\varphi)) \rightarrow
		 \Ext_R^1(R/xR,M) = 0 $.}
		It follows that  $ \Hom_R(R/xR,\coker(\varphi)) = 0 $. A similar argument as above, shows that $ \coker(\varphi) = 0 $.
\end{proof}
\begin{lem}
Let $M$ and $N$ be two $R$-modules.	Let $ x $ be an $R$-regular and $N$-coregular element for which $xM=0$. Then for all $ i > 0 $ we have \\
\centerline{$ \emph{\Tor}^R_i(M,N) \cong \emph{\Tor}^{R/xR}_{i-1}(M,\emph{\Hom}_R(R/xR,N)) $.}
\end{lem}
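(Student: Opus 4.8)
The plan is to use the change-of-rings spectral sequence for $\Tor$ along the surjection $R \to \bar{R} := R/xR$. Since $xM = 0$, the module $M$ is an $\bar{R}$-module; the composite of the right exact functors $\bar{R} \otimes_R -$ and $M \otimes_{\bar{R}} -$ is $M \otimes_R -$, and $\bar{R} \otimes_R -$ sends $R$-projectives to flat (hence $(M\otimes_{\bar{R}}-)$-acyclic) $\bar{R}$-modules, so the homological Grothendieck spectral sequence of a composite functor (cf. \cite{Ro}) gives a first-quadrant spectral sequence
\[
E^2_{p,q} = \Tor^{\bar{R}}_p\big(M, \Tor^R_q(\bar{R}, N)\big) \underset{p}\Longrightarrow \Tor^R_{p+q}(M, N).
\]
The crux is that the coefficient modules $\Tor^R_q(\bar{R}, N)$ are supported only in degree $q = 1$.

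To see this I would resolve $\bar{R}$ over $R$ by $0 \to R \overset{x}\longrightarrow R \to \bar{R} \to 0$, which is exact because $x$ is $R$-regular. Applying $- \otimes_R N$ shows that $\Tor^R_q(\bar{R}, N) = 0$ for $q \geq 2$, that $\Tor^R_0(\bar{R}, N) = N/xN$, and that $\Tor^R_1(\bar{R}, N)$ is the kernel of multiplication by $x$ on $N$, namely $\Hom_R(R/xR, N)$. Here the hypothesis that $x$ is $N$-coregular is used decisively: multiplication by $x$ is surjective on $N$, so $N/xN = 0$, and hence $\Tor^R_q(\bar{R}, N) = 0$ for every $q \neq 1$.

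Consequently $E^2_{p,q} = 0$ for $q \neq 1$, every differential into or out of the column $q = 1$ vanishes, the spectral sequence collapses at the $E^2$-page, and the only nonzero graded piece of the abutment in total degree $i = p+1 \geq 1$ yields
\[
\Tor^R_i(M, N) \cong E^2_{i-1,1} = \Tor^{\bar{R}}_{i-1}\big(M, \Hom_R(R/xR, N)\big),
\]
which is the assertion. I expect the one point needing care to be simply setting up this change-of-rings spectral sequence in exactly the form above — in particular, checking that $M$, being an $\bar{R}$-module, legitimately occupies the outer $\Tor^{\bar{R}}$-slot; once that is granted the computation is immediate and the collapse is forced because $E^2$ is concentrated in a single column. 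As an alternative that avoids spectral sequences altogether, one can build an $R$-free resolution of $M$ out of an $\bar{R}$-free one (using the $R$-regularity of $x$ to lift differentials along $0 \to R \overset{x}\longrightarrow R \to \bar{R} \to 0$), apply $- \otimes_R N$, and compute homology directly from the surjectivity of $x$ on $N$; this is just the spectral-sequence argument written out explicitly.
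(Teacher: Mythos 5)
Your argument is correct, and the two computations at its heart --- that $\Tor^R_q(R/xR,N)$ vanishes for $q\neq 1$ because $x$ is $R$-regular and acts surjectively on $N$, and equals $(0:_N x)=\Hom_R(R/xR,N)$ for $q=1$ --- are exactly the inputs the paper uses; but the formal machinery is different. The paper does not set up the change-of-rings spectral sequence. Instead it applies $M\otimes_R-$ to the short exact sequence $0\to\Hom_R(R/xR,N)\to N\overset{x}{\longrightarrow}N\to 0$ to obtain $\Tor^R_1(M,N)\cong M\otimes_{R/xR}\Hom_R(R/xR,N)$ (using that multiplication by $x$ is zero on $\Tor^R_1(M,N)$ because $xM=0$, and that $M\otimes_RN=0$ because $x$ is simultaneously zero on $M$ and surjective on $N$), then notes that $\Tor^R_{i+1}(F,N)=0$ for $i\geq 1$ whenever $F$ is a free $R/xR$-module since $\pd_R(F)=1$, and concludes by the uniqueness statement for strongly connected sequences of functors on $R/xR$-modules (Rotman, Exercise 8.10). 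So the paper's route is the axiomatic, dimension-shifting one, requiring only long exact sequences, whereas yours packages the same data into the $E^2$-page of the Grothendieck spectral sequence for the composite $M\otimes_{R/xR}(R/xR\otimes_R-)$; your closing alternative, lifting an $R/xR$-free resolution of $M$ to an $R$-free one, is essentially the paper's argument made explicit. The only blemish is terminological: with the usual homological indexing the surviving locus $q=1$ of your $E^2$-page is a row rather than a column; the degeneration argument is unaffected.
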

\begin{proof}
Note that 	$ \Big(\Tor^R_i(-,N)\Big)_{i \in \mathbb{N}_0} $ is a positive strongly connected sequence of covariant functors. Now, apply the functor $ M \otimes_R - $ the exact sequence \\
\centerline{$ 0 \longrightarrow \Hom_R(R/xR,N) \longrightarrow N \overset{x}\longrightarrow N \longrightarrow 0 $,}
to obtain the following exact sequence \\
 \centerline{$ \Tor^R_1(M,N) \overset{x}\rightarrow \Tor^R_1(M,N) \rightarrow M \otimes_R \Hom_R(R/xR,N) \rightarrow M \otimes_R N $.}
But, $ \Tor^R_1(M,N) \overset{x}\rightarrow \Tor^R_1(M,N) $ is just the zero map, and that $ M \otimes_R N = 0 $. Thus, we have \\
 \centerline{$ \Tor^R_1(M,N) \cong M \otimes_R \Hom_R(R/xR,N) = M \otimes_{R/xR} \Hom_R(R/xR,N)$.}
 On the other hand, 
 if $F$ is a free $ R/xR $-module, then $ \pd_R(F) = 1 $ and hence $ \Tor^R_i(M,N)  = 0$ for all $i > 1$. According to \cite[Exercise 8.10]{Ro2}, the proof is complete.
\end{proof}
\begin{thm}
	Let $ R \rightarrow (S, \fn) $ be a local ring epimorphism of local rings with $R$ is Gorenstein, $S$ is \emph{CM}, $ \emph{\dim}(R) = n $ and 
	$ \emph{\dim}(S) = m $. Then $ \emph{\Tor}^R_{n-m}(S,E(R/\fm)) $ is codualizing for $S$.
\end{thm}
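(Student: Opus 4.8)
The plan is to reduce to the case that $R$ is complete --- so that $S = R/\fa$ is complete as well --- and then to pin down $\Tor^R_{n-m}(S, E(R/\fm))$ using Matlis duality over $R$. Each $\Tor^R_i(S, E(R/\fm))$ is an Artinian $R$-module, being a subquotient of $E(R/\fm)^{b_i}$ where $b_i$ is the $i$-th Betti number of the finitely generated module $S$; hence it is unchanged by $-\otimes_R \widehat{R}$, and since $E(R/\fm) = E_{\widehat{R}}(\widehat{R}/\widehat{\fm})$, flat base change gives $\Tor^R_i(S, E(R/\fm)) \cong \Tor^{\widehat{R}}_i(\widehat{S}, E(R/\fm))$; moreover the quasidualizing and finite-projective-dimension conditions over $S$ coincide with those over $\widehat{S}$ (compare Remark 4.5). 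So assume $R$ is complete. Since $R$ is Gorenstein, hence Cohen--Macaulay, and $S$ is Cohen--Macaulay of dimension $m$, the ideal $\fa$ has $\h(\fa) = \dim(R) - \dim(S) = n - m$, and this number equals $\gr(\fa)$.

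The core step is a Matlis-duality computation. As $E(R/\fm)$ is injective, $(-)^{\vee} = \Hom_R(-, E(R/\fm))$ is exact; applying it to $P_{\bullet} \otimes_R E(R/\fm)$ for a projective resolution $P_{\bullet} \to S$ and using Hom--tensor adjointness gives natural isomorphisms
$$ \Tor^R_i\big(S, E(R/\fm)\big)^{\vee} \;\cong\; \Ext^i_R\big(S, E(R/\fm)^{\vee}\big) \;\cong\; \Ext^i_R(S, R), $$
the last isomorphism because $R$ is complete, so $E(R/\fm)^{\vee} = \Hom_R(E(R/\fm), E(R/\fm)) \cong R$. Now I use the classical fact that, for the Gorenstein local ring $R$ and its Cohen--Macaulay quotient $S = R/\fa$ of codimension $c = n - m$, one has $\Ext^i_R(S, R) = 0$ for $i \neq c$ while $\Ext^c_R(S, R) \cong \omega_S$, the canonical module of $S$; this comes from local duality over $R$, which identifies $\Ext^i_R(S, R)^{\vee}$ with $\H^{n-i}_{\fm}(S)$, together with the vanishing $\H^j_{\fm}(S) = 0$ for $j \neq m$. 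Dualizing again and using that the Artinian modules $\Tor^R_i(S, E(R/\fm))$ are Matlis reflexive, I conclude $\Tor^R_i(S, E(R/\fm)) = 0$ for $i \neq n - m$ and
$$ \Tor^R_{n-m}\big(S, E(R/\fm)\big) \;\cong\; \omega_S^{\vee}, $$
where $(-)^{\vee}$ may be read as Matlis duality over $S$, since on $S$-modules $\Hom_R(-, E(R/\fm))$ agrees with $\Hom_S(-, E_S(S/\fn))$. (Equivalently, by local duality for $S$, the module in question is $\H^m_{\fn}(S)$.)

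It remains to see that $\omega_S^{\vee}$ is codualizing for $S$. Because $S$ is complete and Cohen--Macaulay, $\omega_S$ is a dualizing $S$-module; applying Corollary 3.2 to $S$ gives $\omega_S^{\vee} \in \mathfrak{Q}_0(S)$, and by Remark 4.5 it is codualizing since its Matlis dual $(\omega_S^{\vee})^{\vee} \cong \omega_S$ is dualizing --- equivalently, $\pd_S(\omega_S^{\vee}) < \infty$ because $\id_S(\omega_S) < \infty$. This gives the theorem.

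The only steps I expect to require care are the reduction to the complete case --- verifying that $\Tor$, the injective hull $E(R/\fm)$, the homothety and $\Ext$ conditions, and finite projective dimension are all insensitive to completion --- and the invocation of $\Ext^i_R(S, R) = 0$ for $i \neq c$ together with $\Ext^c_R(S, R) \cong \omega_S$, which must be applied with the hypotheses "$R$ Gorenstein, $S$ Cohen--Macaulay" in place. The identity $\Tor^R_i(S, E(R/\fm))^{\vee} \cong \Ext^i_R(S, R)$ is essentially formal once $R$ is complete Gorenstein and is the step that forces the $\Tor$ to be concentrated in the single degree $n - m$.
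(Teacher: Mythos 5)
Your proof is correct, but it takes a genuinely different route from the paper's. The paper first reduces the codimension to zero by factoring out a maximal $R$-regular sequence contained in $\ker(R\to S)$ (its Lemma 4.8 converts $\Tor^R_{n-m}(S,E(R/\fm))$ into $S\otimes_{\overline{R}}E_{\overline{R}}(R/\fm)$), then passes to completions, and then verifies the semidualizing axioms for $\Hom_R(S,R)\cong\Hom_S\big(S\otimes_R E(R/\fm),E_S(S/\fn)\big)$ by hand: it shows $\gd_R(S)=0$, deduces the homothety isomorphism from the biduality map via a commutative diagram, kills the higher $\Ext$'s with a change-of-rings spectral sequence, and finally exhibits a bounded free $S$-resolution of $S\otimes_R E(R/\fm)$ by base-changing a finite free resolution of $E(R/\fm)$. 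You skip the codimension reduction entirely and identify the module outright: Matlis duality over the complete Gorenstein ring $R$ gives $\Tor^R_i(S,E(R/\fm))^{\vee}\cong\Ext^i_R(S,R)$, local duality concentrates this in degree $n-m$ where it equals $\omega_S$, so the module in question is $\omega_S^{\vee}\cong\H^m_{\fn}(S)$, and Kubik's bijection (Corollary 3.2) together with Remark 4.5 finishes. Your argument is shorter and more conceptual but imports the classical facts $\Ext^{n-m}_R(S,R)\cong\omega_S$ and local duality, which the paper does not invoke; the paper's argument is longer but essentially self-contained modulo G-dimension theory, and its explicit resolution exhibits $\pd_S$ directly rather than inferring its finiteness from $\id_S(\omega_S)<\infty$. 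Both proofs rest on the same completion reduction and on Kubik's Theorem 3.1, and the small unjustified steps in your reduction to the complete case (descent of finite projective dimension from $\widehat{S}$ to $S$) are glossed over at exactly the same level in the paper's own Remark 4.5.
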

\begin{proof}
	One has $ S \cong R/I $ for some ideal $I$ of $R$ with $ \grd_R(I) = \h_R(I) = n-m $. Assume that $\underline{x}$ is a maximal $R$-sequence in $I$. Set $ \overline{R}=R/\underline{x}R $. By Lemma 4.8, we have the isomorphism $ \Tor^R_{n-m}(S,E(R/\fm)) \cong S \otimes_{\overline{R}}E_{\overline{R}}(R/\fm)$. Now, $ \overline{R} $ is Gorenstein and the induced homomorphism 
	$ \overline{R} \rightarrow S$ is again an epimorphism. Hence, we can replace $ R $ by $ \overline{R} $, and assume that $ n = m $. Note that $ S \otimes_R E(R/\fm) $ is a Artinian $S$-module. We have to show that $ S \otimes_R E(R/\fm) \in \mathfrak{Q}_0(S)$ and that
	$ \pd_S(S \otimes_R E(R/\fm)) < \infty $. Observe that the completion of $S$ in $\fn$-adic topology coincides with the completion of $S$ as an $R$-module since $ \fm S=\fn $. Also, we have the isomorphisms \\
	\centerline{$ S \otimes_R E(R/\fm) = S \otimes_R E_{\widehat{R}}(\widehat{R}/\fm \widehat{R}) \cong S \otimes_{\widehat{R}} E_{\widehat{R}}(\widehat{R}/\fm \widehat{R}) \cong \widehat{S} \otimes_{\widehat{R}} E_{\widehat{R}}(\widehat{R}/\fm \widehat{R})$.}
	Moreover, we have the commutative diagram\\
	\begin{displaymath}	
	\xymatrix{
		R  \ar@{->}[rr]^{\phi} \ar@{->}[d]  &&  S \ar[d] \\
		\widehat{R}  \ar@{->}[rr]_-{\widehat{\phi}}  &&  \widehat{S}}
	\end{displaymath}
		which allows us to make an additional assumption that $R$ and $S$ are complete in respective topologies.
		  In view of \cite[Theorem 3.1]{K}, we have to show that \\
		  \centerline{$ \Hom_R(S,R) \cong \Hom_S\big(S \otimes_R E_R(R/\fm),E_S(S/\fn)\big) \in \mathfrak{S}_0(S) $.}
		  First, note that 
		 $ \gd_R(S) < \infty $ by \cite[Theorem 1.4.9]{C}, and then we have the equality $ \gd_R(S)= \depth(R) - \depth_R(S) = 0 $, by \cite[Theorem 1.4.9]{C}. Hence, the natural biduality map $ \delta_{SR}: S \rightarrow \Hom_R(\Hom_R(S,R),R) $ is an isomorphism. Now, the commutative diagram 
		 \begin{displaymath}
		 \xymatrix{
		 	S  \ar@{->}[rr]^-{\delta_{SR}} \ar@{=}[dd] &&  \Hom_R(\Hom_R(S,R),R) \ar[d]^{\cong} \\
		 	&& \Hom_R(S\otimes_S \Hom_R(S,R),R) \ar[d]^{\cong} \\
		 	S \ar@{->}[rr]_-{\chi_{S}^{Hom(S,R)}} &&  \Hom_S(\Hom_R(S,R),\Hom_R(S,R))}
		 \end{displaymath} 
		 shows that $ \chi_{S}^{Hom(S,R)} $ is an isomorphism. Also, by \cite[Theorem 10.64]{Ro}, there is a third quadrant spectral sequence\\
		 \centerline{$ \E^{p,q}_2 = \Ext^p_S(\Hom_R(S,R) , \Ext^q_R(S , R)) \underset{p}\Longrightarrow \Ext^n_R(\Hom_R(S,R),R) $.}
		 But, since $ \gd_R(S) = 0 $, we have $ \Ext^q_R(S , R) = 0 $ for all $q > 0$, and then $ \E^{p,q}_2 $ collapses on the $p$-axis. So that, we have the isomorphism\\
		 \centerline{$ \Ext^n_S(\Hom_R(S,R) , \Hom_R(S , R)) \cong \Ext^n_R(\Hom_R(S,R),R) = 0$,}
		 for all $n > 0$. Now, we need only to show that $ \pd_S(S \otimes_R E(R/\fm)) < \infty$. To do this, first note that
		 $ \pd_R(E(R/\fm)) = n $. On the other hand, by \cite[Theorem 3.2.1]{EJ1}, we have \\
		  \centerline{$ \Tor^R_i(S,E(R/\fm)) \cong \Ext^i_R(S , R)^{\vee} = 0 $,}
		  for all $ i > 0 $. Hence, a bounded free resolution of $ E(R/\fm) $, by applying $ S \otimes_R -$, gives a bounded free resolution for $ S \otimes_R E(R/\fm) $ as an $S$-module, as wanted.
\end{proof}
\begin{nota}
	\emph{We denote the full subcategory of artinian (resp. noetherian) $R$-modules of finite projective dimension 
		by $ \mathcal{P}^{art}(R) $(resp. $ \mathcal{P}^{noet}(R) $).}
\end{nota}
\begin{lem}
Let $T$ be a quasidualizing $R$-module. If $M \in \mathcal{P}^{art}(R)$, then $ M \hookrightarrow T^m $ for some $ m \in \mathbb{N} $.	
\end{lem}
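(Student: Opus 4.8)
The plan is to reduce to the complete case and then translate everything, via Matlis duality, into a statement about finitely generated modules and the Bass class of a semidualizing module. As is customary here we may assume $R$ is complete: $M$ is $\fm$-torsion, so $M\otimes_R\widehat{R}\cong M$ and $\pd_R(M)=\pd_{\widehat{R}}(M)$, $T$ is again quasidualizing over $\widehat{R}$, and any $\widehat{R}$-embedding $M\hookrightarrow T^m$ is in particular an $R$-embedding. Put $C:=T^{\vee}$. By \cite[Theorem 3.1]{K}, $C$ is a semidualizing $R$-module, and $C^{\vee}=T^{\vee\vee}\cong T$ because $T$ is Matlis reflexive.

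The crux is to observe that $M^{\vee}$ is a finitely generated $R$-module of \emph{finite injective dimension}; this is where the hypothesis $M\in\mathcal{P}^{art}(R)$ is used, and it is the one step requiring care, since what one needs is membership in the Bass class $\mathcal{B}_C$ and not in the Auslander class $\mathcal{A}_C$. Finite generation of $M^{\vee}$ is Matlis duality. For the injective dimension, choose a finite resolution $0\to P_n\to\cdots\to P_0\to M\to 0$ by projective (hence free) $R$-modules and apply the exact functor $(-)^{\vee}$; this yields an exact sequence $0\to M^{\vee}\to P_0^{\vee}\to\cdots\to P_n^{\vee}\to 0$ in which each $P_i^{\vee}$ is a (possibly infinite) product of copies of $E(R/\fm)$, hence injective. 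Therefore $\id_R(M^{\vee})\le n<\infty$, so $M^{\vee}\in\mathcal{B}_C$.

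To conclude, I would invoke the defining isomorphism of the Bass class: the natural map $\theta_{CCM^{\vee}}:C\otimes_R\Hom_R(C,M^{\vee})\to M^{\vee}$ is an isomorphism, and $\Hom_R(C,M^{\vee})$ is finitely generated, so there is a surjection $R^m\twoheadrightarrow\Hom_R(C,M^{\vee})$ for some $m$; applying the right exact functor $C\otimes_R-$ and composing with $\theta_{CCM^{\vee}}$ produces a surjection $C^m\twoheadrightarrow M^{\vee}$. Dualizing this surjection gives the required monomorphism $M\cong M^{\vee\vee}\hookrightarrow(C^m)^{\vee}\cong(C^{\vee})^m\cong T^m$. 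Everything past the second paragraph is formal; the only subtlety to keep an eye on is the projective-to-injective flip produced by $(-)^{\vee}$.
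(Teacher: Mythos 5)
Your proof is correct, but it travels a genuinely different road from the paper's. The paper works entirely on the artinian side: since $M$ is Matlis reflexive (artinian over the complete ring) and of finite projective dimension, Lemma 2.5 (Kubik's result) puts $M$ in $\mathcal{G}_T$, so $M\cong \Hom_R(\Hom_R(M,T),T)$; one then notes $\Hom_R(M,T)$ is finitely generated, picks a surjection $R^m\twoheadrightarrow\Hom_R(M,T)$, and applies $\Hom_R(-,T)$ to get $M\hookrightarrow T^m$ in one line. You instead transport the whole problem through Matlis duality to the semidualizing module $C=T^{\vee}$: the observation that dualizing a (possibly infinitely generated) free resolution of $M$ yields a finite injective coresolution of $M^{\vee}$ is the right one, it places $M^{\vee}$ in $\mathcal{B}_C$, and the Bass-class isomorphism $\theta$ turns a free cover of $\Hom_R(C,M^{\vee})$ into a surjection $C^m\twoheadrightarrow M^{\vee}$, which dualizes back to the desired embedding. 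The two arguments are essentially Matlis dual to one another (indeed $\Hom_R(M,T)\cong(C\otimes_R M)^{\vee}\cong\Hom_R(C,M^{\vee})$ by adjointness), and they use the hypothesis $\pd_R(M)<\infty$ at the corresponding spots. What yours buys is independence from Lemma 2.5/$\mathcal{G}_T$, relying only on the standard fact that finite-injective-dimension modules lie in $\mathcal{B}_C$; what it costs is the extra bookkeeping of the two dualizations and the projective-to-injective flip, all of which you handle correctly.
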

\begin{proof}
We can assume that $R$ is complete. Note that $\Hom_R(M,T)$ is a finitely generated $R$-module. Now, in view of Lemma 2.5, the exact sequence 
$ R^m \longrightarrow \Hom_R(M,T) \longrightarrow 0$, $m \in \mathbb{N}$, induces an exact sequence \\
\centerline{$ 0 \longrightarrow M \cong \Hom_R(\Hom_R(M,T),T) \longrightarrow T^m$,}
as wanted.
\end{proof}
\begin{thm}
	Let $ \emph{\dim}(R) = d $. Let $T$ be codualizing and let $M$ be an $R$-module. 
	\begin{itemize}
		\item[(i)]{Let $M \in \mathcal{P}^{art}(R) $ with $ \emph{\width}_R(M) = \emph{\dim}(R) $. Then $ M \cong T^m $ for some $ m \in \mathbb{N} $.}
		\item[(i)]{$M \in \mathcal{P}^{art}(R) $ if and only if there exists an exact sequence\\
			\centerline{$ 0 \longrightarrow M \longrightarrow T_0 \longrightarrow \cdots \longrightarrow T_n \longrightarrow 0 $,}
			for some $ n \in \mathbb{N} $, in which every $ T_i $ is a finite direct sum of copies of $T$.}
		\end{itemize}
\end{thm}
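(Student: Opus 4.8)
The plan is to reduce to the complete case and then transport both statements across Matlis duality $(-)^{\vee}$. Passing from $R$ to $\widehat{R}$ changes neither membership in $\mathcal{P}^{art}(R)$ nor the width of an artinian module, $T$ stays codualizing, and an exact sequence of $\widehat{R}$-modules of the form in (ii) is such a sequence of $R$-modules too; so it suffices to treat the complete case. I will then use that $R$ is Cohen--Macaulay with dualizing (hence canonical) module $\omega := T^{\vee}$ by Remark 4.6, that $M$ is artinian iff $M^{\vee}$ is finitely generated, that dualizing a bounded free resolution of $M$ gives $\id_R(M^{\vee}) \leq \pd_R(M)$, and that $\width_R(M) = \depth_R(M^{\vee})$ since $\Tor^R_i(k,M)^{\vee} \cong \Ext^i_R(k,M^{\vee})$ for every $i$.

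For (i): by hypothesis $N := M^{\vee}$ is a finitely generated module of finite injective dimension with $\depth_R(N) = \width_R(M) = \dim(R)$, i.e. a maximal Cohen--Macaulay module of finite injective dimension. Put $L := \Hom_R(\omega,N)$, finitely generated. Since $\id_R(N) < \infty$ we have $N \in \mathcal{B}_{\omega}$, so $\omega \otimes_R L \cong N$ and $\Tor^R_i(\omega,L) = 0$ for $i \geq 1$, and the Foxby equivalence \cite{C} gives $\pd_R(L) < \infty$. The depth formula then reads $\depth_R(N) = \depth_R(\omega) + \depth_R(L) - \depth(R)$, and $\depth_R(\omega) = \depth(R)$ forces $\depth_R(L) = \depth_R(N) = \dim(R)$; hence $\pd_R(L) = 0$ by Auslander--Buchsbaum and $L \cong R^{m}$ for some $m$. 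Thus $N \cong \omega \otimes_R R^{m} \cong \omega^{m} \cong (T^{m})^{\vee}$, and applying $(-)^{\vee}$ — using that artinian modules over the complete ring $R$ are Matlis reflexive — yields $M \cong T^{m}$.

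For (ii), the implication "$\Leftarrow$" should be routine: $M$ embeds in the artinian module $T_0$, hence is artinian, and splitting the given sequence into short exact sequences and inducting on $n$, using $\pd_R(T_0) = \pd_R(T) = \dim(R) < \infty$ (Proposition 4.5(ii)) and $\pd_R(A) \leq \max\{\pd_R(B), \pd_R(C) - 1\}$ for $0 \to A \to B \to C \to 0$, gives $\pd_R(M) < \infty$. For "$\Rightarrow$" (assume $M \neq 0$) I plan to iterate Lemma 4.12: from $M_0 := M$ form short exact sequences $0 \to M_i \to T^{m_i} \to M_{i+1} \to 0$, each $M_{i+1}$ again artinian of finite projective dimension (from $\pd_R(T) < \infty$), and splice them into a coresolution of $M$ by finite direct sums of copies of $T$. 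To force termination I would track widths: feeding $\width_R(T) = \depth(R) = \dim(R) =: d$ (Proposition 2.4(v)) into the long exact $\Tor^R_{\bullet}(k,-)$ sequences of these short exact sequences shows $\width_R(M_{i+1}) = \width_R(M_i) + 1$ whenever $\width_R(M_i) \leq d - 2$, while $\width_R(M_i) = d - 1$ forces $\width_R(M_{i+1}) = d$. Since a nonzero artinian module has finite width (it equals the depth of its Matlis dual), after at most $d$ steps some $M_r$ has $\width_R(M_r) = d$ (the case $d = 0$ being immediate); then part (i) gives $M_r \cong T^{b}$, and truncating the coresolution at $M_r$ produces the required finite exact sequence.

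The step I expect to be the main obstacle is this termination bookkeeping in "$\Rightarrow$" — controlling precisely how $\width$ evolves along the iterated Lemma 4.12 construction so that it is forced up to $\dim(R)$ — together with the structural input behind (i): that a finitely generated maximal Cohen--Macaulay module of finite injective dimension over a complete Cohen--Macaulay local ring is a finite direct sum of copies of the canonical module (exactly the Matlis dual of the conclusion we want, furnished by the Foxby equivalence). As an alternative to the iteration in "$\Rightarrow$", one can resolve $\Hom_R(\omega,M^{\vee})$ by free modules and apply $\omega \otimes_R -$ to get a finite resolution $0 \to \omega^{b_n} \to \cdots \to \omega^{b_0} \to M^{\vee} \to 0$ by finite direct sums of copies of $\omega$, then apply $(-)^{\vee}$ to obtain $0 \to M \to T^{b_0} \to \cdots \to T^{b_n} \to 0$.
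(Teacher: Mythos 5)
Your argument is correct, and the two halves relate to the paper differently. For part (ii) you follow essentially the same route as the paper: iterate Lemma 4.12 to build the coresolution and stop it by showing the width of the cosyzygies climbs to $\dim(R)$, at which point part (i) applies; the paper's own termination step is the one terse sentence ``note that $\width_R(T_d)=d$,'' and your long-exact-sequence bookkeeping (using $\width_R(T)=\depth(R)=d$ from Proposition 2.4(v) and the bound $\width \leq d$ for nonzero artinian modules) is exactly the detail that sentence suppresses. For part (i), however, you take a genuinely different path. The paper stays on the artinian side: it cuts down by a maximal $R$-sequence $\underline{x}$, observes that $\Hom_R(R/\underline{x}R,M)$ is a finite-length free $R/\underline{x}R$-module, identifies $R/\underline{x}R$ with $\Hom_R(R/\underline{x}R,T)$ via Proposition 4.1(ii) and Lemma 4.6(ii), and then lifts the resulting isomorphism back to $M\cong T^m$ using Lemma 4.7. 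You instead Matlis-dualize and invoke the noetherian-side structure theorem: $M^{\vee}$ is a maximal Cohen--Macaulay module of finite injective dimension over the complete Cohen--Macaulay ring, hence a direct sum of copies of $\omega=T^{\vee}$ by Foxby equivalence ($\Hom_R(\omega,M^{\vee})$ has finite projective dimension and, by the depth formula, depth equal to $\depth(R)$, so it is free by Auslander--Buchsbaum). This is literally Sharp's theorem, the result the paper says it is dualizing, so your proof makes the duality explicit at the cost of importing that theorem as a black box, whereas the paper's reduction-mod-$\underline{x}$ argument is self-contained within its own lemmas and in fact yields an independent proof of the artinian statement. Your alternative for the forward direction of (ii) (dualize a finite $\omega$-coresolution of $M^{\vee}$ obtained from a free resolution of $\Hom_R(\omega,M^{\vee})$) is likewise valid and bypasses the width induction entirely.
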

\begin{proof}
	In the rest of the proof, we can assume that $R$ is complete.
	
	(i). Choose $\underline{x}$ to be a maximal $R$-sequence in $\fm$. Then,  $\underline{x}$ is $M$-coregular as well. Now, 
	$ \Hom_R(R/\underline{x}R , M) $ is a free $ R/\underline{x}R $-module of finite length. Therefore, by Proposition 4.1(ii) and Lemma 4.6(ii), we have
	the isomorphisms
		\[\begin{array}{rl}
		\Hom_R(R/\underline{x}R , M) &\cong \Big(R/\underline{x}R\Big)^m \\
		&\cong \Hom_R(R/\underline{x}R , T)^m  \\
		&\cong \Hom_R(R/\underline{x}R , T^m),\\
		\end{array}\]
		for some $m \in \mathbb{N}$. Now, in view of Lemma 4.7, we have $ M \cong T^m $.
		
	(ii). If there exists such an exact sequence, then it easily follows that $M \in \mathcal{P}^{art}(R) $. We shall prove the converse. By  Lemma 4.12, we can write an exact sequence \\
	\centerline{$ 0 \longrightarrow M \longrightarrow T_0 \longrightarrow K_0 \longrightarrow 0 $,}
in which $T_0$ is a finite direct sum of copies of $T$. Now, if $ K_0 = 0 $, then we are done.
 If not, since $ K_0 \in \mathcal{P}^{art}(R) $, by another use of Lemma 4.12, we can write an exact sequence \\
	\centerline{$ 0 \longrightarrow K_0 \longrightarrow T_1 \longrightarrow K_1 \longrightarrow 0 $,}
	in which $T_1$ is a finite direct sum of copies of $T$. Now, we can proceed the same way to get an exact sequence \\
	\centerline{$ 0 \longrightarrow M \longrightarrow T_1 \longrightarrow \cdots \longrightarrow T_n \longrightarrow \cdots $.}
	To complete the proof, note that $ \width_R(T_d) = d $, and hence, by (i), we are done.
\end{proof}
\begin{rem}
	\emph{A codualizing module, if exists, is unique up to isomorphism. A complete CM local ring admits a codualizing module.}
\end{rem}
\begin{thm}
Let $R$ be complete with $\emph{\dim}(R) = d$, and let $T$ be codualizing. There is an equivalence of categories $ \emph{\Hom}_R(-,T) :  \mathcal{P}^{art}(R) \longrightarrow \mathcal{P}^{noet}(R)$ with the functor being its own quasi-inverse.
\end{thm}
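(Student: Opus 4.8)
The plan is to show that the contravariant functor $\Hom_R(-,T)$ carries $\mathcal{P}^{art}(R)$ into $\mathcal{P}^{noet}(R)$ and $\mathcal{P}^{noet}(R)$ into $\mathcal{P}^{art}(R)$, and that on both categories the biduality natural transformation $\delta_{-TT}$ is an isomorphism; this yields the asserted equivalence with the functor as its own quasi-inverse. The single fact driving everything is that any $P$ lying in $\mathcal{P}^{art}(R)$ or in $\mathcal{P}^{noet}(R)$ belongs to $\mathcal{G}_T$: indeed $P$ has finite projective dimension and $P$ is Matlis reflexive (an artinian module over the complete local ring $R$ is Matlis reflexive, and so is a finitely generated one), so Lemma 2.5 applies. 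Hence $\Ext^i_R(P,T)=0$ for all $i>0$ and $\delta_{PTT}\colon P\to\Hom_R(\Hom_R(P,T),T)$ is an isomorphism. I will also use that $\pd_R(T)=\dim(R)=d<\infty$ by Proposition 4.1(ii), since $T$ is codualizing.

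First I would check that $\Hom_R(M,T)\in\mathcal{P}^{noet}(R)$ for $M\in\mathcal{P}^{art}(R)$. By Theorem 4.12 there is an exact sequence $0\to M\to T_0\to\cdots\to T_n\to 0$ with each $T_i$ a finite direct sum of copies of $T$. Splitting it into short exact sequences, each syzygy is artinian (a quotient of some $T_i$) and of finite projective dimension (by dévissage, using $\pd_R M<\infty$ and $\pd_R T_i<\infty$), hence lies in $\mathcal{P}^{art}(R)\subseteq\mathcal{G}_T$, so $\Ext^1_R(-,T)$ vanishes on it. Applying $\Hom_R(-,T)$ therefore preserves exactness:
\[
0\longrightarrow\Hom_R(T_n,T)\longrightarrow\cdots\longrightarrow\Hom_R(T_0,T)\longrightarrow\Hom_R(M,T)\longrightarrow 0 .
\]
Writing $T_i=T^{r_i}$ and using that $R$ is complete and $T$ is quasidualizing, $\Hom_R(T_i,T)\cong\Hom_R(T,T)^{r_i}\cong R^{r_i}$ is finitely generated free; thus this is a finite free resolution, so $\Hom_R(M,T)$ is finitely generated with $\pd_R\Hom_R(M,T)\le n<\infty$.

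For $N\in\mathcal{P}^{noet}(R)$ I would take a finite free resolution $0\to F_p\to\cdots\to F_0\to N\to 0$. Since $N\in\mathcal{G}_T$ we have $\Ext^i_R(N,T)=0$ for $i>0$, so $\Hom_R(-,T)$ yields an exact sequence $0\to\Hom_R(N,T)\to\Hom_R(F_0,T)\to\cdots\to\Hom_R(F_p,T)\to 0$ with $\Hom_R(F_i,T)\cong T^{b_i}$. In particular $\Hom_R(N,T)$ embeds in the artinian module $T^{b_0}$, hence is artinian; and since $\pd_R(T^{b_i})=d<\infty$, repeatedly using that $\pd_R A\le\max\{\pd_R B,\pd_R C-1\}$ for a short exact sequence $0\to A\to B\to C\to 0$ shows $\pd_R\Hom_R(N,T)<\infty$. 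Hence $\Hom_R(N,T)\in\mathcal{P}^{art}(R)$.

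Finally, combining the two displayed conclusions with the first paragraph (the maps $\delta_{MTT}$ and $\delta_{NTT}$ are isomorphisms for every $M\in\mathcal{P}^{art}(R)$ and $N\in\mathcal{P}^{noet}(R)$, and $\delta_{-TT}$ is natural) shows that $\Hom_R(-,T)$ is an equivalence between $\mathcal{P}^{art}(R)$ and $\mathcal{P}^{noet}(R)$ with itself as quasi-inverse. The step needing the most care is the exactness of the two resolutions after applying $\Hom_R(-,T)$ — that is, controlling the vanishing of $\Ext^1_R(-,T)$ on every intermediate syzygy — together with the dévissage that passes from a finite (co)resolution by modules of finite projective dimension to finite projective dimension itself; everything else is formal once Lemma 2.5, Theorem 4.12 and Proposition 4.1(ii) are in hand.
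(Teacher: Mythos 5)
Your proof is correct, and while the noetherian-to-artinian half coincides with the paper's (finite free resolution of $N$, exactness via Lemma 2.5, then the bound $\pd_R(\Hom_R(F_i,T))=d$), your treatment of the artinian-to-noetherian half is genuinely different. The paper rewrites $\Hom_R(A,T)\cong\Hom_R(T^{\vee},A^{\vee})$ and runs a change-of-rings spectral sequence $\Ext^p_R(\Ext^q_R(k,T^{\vee}),A^{\vee})\Rightarrow\Tor^R_n(k,\Hom_R(T^{\vee},A^{\vee}))$, exploiting that $T^{\vee}$ is dualizing to kill the $\Tor$'s above degree $d$ and conclude finite projective dimension (having first noted that $\Hom_R(A,T)$ is finitely generated). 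You instead invoke Theorem 4.12(ii) to get a finite $T$-coresolution of $M$, check by d\'evissage that every cosyzygy again lies in $\mathcal{P}^{art}(R)\subseteq\mathcal{G}_T$ so that $\Hom_R(-,T)$ preserves exactness, and then use $\Hom_R(T,T)\cong R$ to read off an explicit finite free resolution of $\Hom_R(M,T)$. Your route is more elementary (no Matlis duality gymnastics, no spectral sequence) and produces the resolution explicitly, at the cost of leaning on Theorem 4.12(ii) (which is proved earlier, so there is no circularity) and of giving the length bound $n$ of the coresolution rather than the sharp bound $d$ that drops out of the spectral-sequence argument; both proofs ultimately rest on Lemma 2.5 for the biduality isomorphisms that make the functor its own quasi-inverse.
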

\begin{proof}
First, note that if $M$ (resp. $A$) is noetherian (resp. artinian), then $ \Hom_R(M,T) $ (resp. $ \Hom_R(A,T) $) is artinian (resp. noetherian). Now, let $ A \in \mathcal{P}^{art}(R) $. Note that \\
\centerline{$ \Hom_R(A,T) \cong \Hom_R(A,T^{\vee \vee}) \cong  \Hom_R(T^{\vee},A^{\vee}) $.}
Now, by \cite[Theorem 10.66]{Ro}, there exists a third quadrant spectral sequence\\
\centerline{$ \E^{p,q}_2 = \Ext^p_R(\Ext^q_R(k,T^{\vee}) , A^{\vee}) \underset{p}\Longrightarrow \Tor_n^R(k,\Hom_R(T^{\vee},A^{\vee})) $.}
But, since $ T^{\vee} $ is dualizing, we have $ \Ext^q_R(k,T^{\vee}) = 0 $ for all $ q \neq d $. Therefore $ \E^{p,q}_2 $ collapses on the 
$ d $-th column, and we have the isomorphism \\
\centerline{$ \Ext^{d-n}_R(\Ext^d_R(k,T^{\vee}) , A^{\vee}) \cong \Tor_n^R(k,\Hom_R(T^{\vee},A^{\vee})) $.}
Now, since $ \Ext^d_R(k,T^{\vee}) \cong k $ and $ \id_R(A^{\vee}) = d $, we get \\
\centerline{$ \Tor_n^R(k,\Hom_R(T^{\vee},A^{\vee})) \cong \Ext^{d-n}_R(k, A^{\vee}) = 0$,}
for all $ n > d $. Consequently, $ \pd_R(\Hom_R(A,T)) = \pd_R(\Hom_R(T^{\vee},A^{\vee})) < \infty $, as wanted. Next, let  
$ M \in \mathcal{P}^{noet}(R) $. Choose a finite free resolution \\
\centerline{ $ 0 \longrightarrow F_t \longrightarrow \cdots \longrightarrow F_0 \longrightarrow M \longrightarrow 0 $ }
and apply $ \Hom_R(-,T) $ to get the complex \\
\centerline{ $ 0 \longrightarrow \Hom_R(M,T) \rightarrow \Hom_R(F_0,T) \rightarrow \cdots \rightarrow \Hom_R(F_n,T) \rightarrow 0 $, }
which is exact by Lemma 2.5. Now, $\pd_R(\Hom_R(F_i,T)) = \pd_R(T) = d$ for all $ 0 \leq i \leq n $. Thus, $ \pd_R(\Hom_R(M,T)) < \infty $, as wanted. Now, we are done by Lemma 2.5.
\end{proof}
\begin{thm}
	Let $T$ be codualizing and $M$ be an $R$-module.
		\begin{itemize}
			\item[(i)]{If $M \in \mathcal{P}^{art}(R) $, then $ \emph{\width}_R(M) = \emph{\depth}_R(\emph{\Hom}_R(M,T)) $.}
			\item[(ii)]{If $M \in \mathcal{P}^{noet}(R) $, then $ \emph{\depth}_R(M) = \emph{\width}_R(\emph{\Hom}_R(M,T)) $.}
		\end{itemize}
\end{thm}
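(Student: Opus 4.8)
We may assume throughout that $R$ is complete, since passing to $\widehat{R}$ affects neither side of either identity (for an artinian module $M$ the width $\width_R(M)$ is unchanged, and $\Hom_R(M,T)$, together with the $\Hom$ and depth of a noetherian module, behaves well under completion). The plan is then to prove (i) and deduce (ii) from it: if (i) holds and $M \in \mathcal{P}^{noet}(R)$, set $A := \Hom_R(M,T)$; by Theorem 4.14 we have $A \in \mathcal{P}^{art}(R)$ and $\Hom_R(A,T) \cong M$, so applying (i) to $A$ gives $\width_R(\Hom_R(M,T)) = \width_R(A) = \depth_R(\Hom_R(A,T)) = \depth_R(M)$, which is (ii). So it remains to prove (i), where $M = A$ is a nonzero artinian module of finite projective dimension (if $A = 0$ both sides are $+\infty$).

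The main device is a single spectral sequence linking $\Ext^\bullet_R(k,\Hom_R(A,T))$ with $\Tor^R_\bullet(k,A)$. First I would note that over the complete ring $R$ the artinian module $A$ is Matlis reflexive, so Lemma 2.5 places it in $\mathcal{G}_T$; in particular $\Ext^i_R(A,T) = 0$ for every $i > 0$. This vanishing is exactly what lets one argue as in the proof of Theorem 3.1(i) and invoke \cite[Theorem 10.62]{Ro} to obtain a third-quadrant spectral sequence
\[ \E^{p,q}_2 = \Ext^p_R\big(\Tor^R_q(k,A),T\big) \underset{p}\Longrightarrow \Ext^{p+q}_R\big(k,\Hom_R(A,T)\big). \]
Each $\Tor^R_q(k,A)$ is a $k$-vector space, indeed a finite-dimensional one: it is the Matlis dual of $\Ext^q_R(k,A^{\vee})$, which is finitely generated over $R$ because $A^{\vee}$ is noetherian. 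Hence $\E^{p,q}_2 \cong \Ext^p_R(k,T)^{\beta_q}$ with $\beta_q = \vdim_k(\Tor^R_q(k,A))$. I would also record that $\depth_R(T) = 0$, since $\Hom_R(k,T) = \Soc(T) \neq 0$ for a nonzero artinian module.

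Set $w = \width_R(A)$, which is finite because $A \neq 0$. Since $\Tor^R_q(k,A) = 0$ for $q < w$, the term $\E^{p,q}_2$ vanishes whenever $q < w$, as it does whenever $p < 0$; thus every nonzero entry satisfies $p + q \geq w$, forcing $\Ext^n_R(k,\Hom_R(A,T)) = 0$ for $n < w$, i.e. $\depth_R(\Hom_R(A,T)) \geq w$. For the reverse inequality the decisive observation is that the only entry of total degree $w$ is the corner $\E^{0,w}_2 = \Hom_R(\Tor^R_w(k,A),T) \cong \Soc(T)^{\beta_w}$, which is nonzero as $\beta_w \geq 1$ and $\Soc(T) \neq 0$. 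No differential can enter this corner (its source would lie in a negative column), and every differential leaving it lands in a group $\E^{r,\,w-r+1}_r$ with $w - r + 1 < w$, hence zero; therefore $\E^{0,w}_\infty = \E^{0,w}_2 \neq 0$, and since it is the unique surviving term in total degree $w$ we conclude $\Ext^w_R(k,\Hom_R(A,T)) \neq 0$, so $\depth_R(\Hom_R(A,T)) \leq w$. Combining the two bounds proves (i).

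The step I expect to need the most care is the very first one: ensuring that the abutment of the spectral sequence genuinely is $\Ext^\bullet_R(k,\Hom_R(A,T))$ and not the hyper-ext of $\uhom_R(A,T)$. This is precisely where the membership $A \in \mathcal{G}_T$ — equivalently $\Ext^{>0}_R(A,T) = 0$ — is used, and it is the reason for reducing to the complete case and invoking Lemma 2.5. Everything else (setting up the $\E_2$-page, isolating and protecting the corner $\E^{0,w}$, and the degree bookkeeping) is routine.
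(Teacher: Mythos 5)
Your proposal is correct and follows essentially the same route as the paper: the same third‑quadrant spectral sequence $\E^{p,q}_2=\Ext^p_R(\Tor^R_q(k,M),T)\Rightarrow\Ext^{p+q}_R(k,\Hom_R(M,T))$ (justified by $\Ext^{>0}_R(M,T)=0$ via Lemma 2.5 after completing), the same corner analysis in total degree $\width_R(M)$, and the same deduction of (ii) from (i) through the biduality $\Hom_R(\Hom_R(M,T),T)\cong M$ and Theorem 4.14. Your write‑up is in fact slightly more careful than the paper's about why the corner term survives to the $\E_\infty$ page and why it is nonzero.
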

\begin{proof}
	In the rest of the proof, we can assume that $R$ is complete.

(i). Since $ \Ext^i_R(M,T) = 0 $ for all $ i > 0 $, by \cite[Theorem 10.62]{Ro}, there is a third quadrant spectral sequence \\
\centerline{$ \E^{p,q}_2 = \Ext^p_R(\Tor^q_R(k,M) , T) \underset{p}\Longrightarrow \Ext^n_R(k,\Hom_R(M,T)) $.}
Assume that $ \width_R(M) = t $. Then, we have $ \E^{p,q}_2 = 0 $ for all $ p > 0 $ whenever $ q < t $. Consequently, we get \\
\centerline{$ \Hom_R(\Tor^R_i(k,M),T) = \E^{0,i}_2 \cong  \Ext^i_R(k,\Hom_R(M,T)) $,}
for all $ 0 \leq i \leq t $. This shows that $  \Ext^i_R(k,\Hom_R(M,T)) = 0 $ for all $ i < t $ and that 
$  \Ext^t_R(k,\Hom_R(M,T)) \neq 0 $, whence $ \depth_R(\Hom_R(M,T)) = t $.

(ii). One has
  	\[\begin{array}{rl}
  	\depth_R(M) &= \depth_R\Big(\Hom_R(\Hom_R(M,T),T)\Big)\\
  	&=\width_R(\Hom_R(M,T)), \\
  	  	\end{array}\]
in which the first equality is from Lemma 2.5 and the other follows from the part (i) and Theorem 4.14.
\end{proof}
\section{Characterizations}
In this section, first, we define a new notion for artinian modules that is dual to the notion of the type for modules, namely cotype of modules. Next, we give some basic facts about this notion and use them to characterize dualizing modules.
\begin{defn}
\emph{ Let $M$ be an artinian $R$-module with $\width_R(M) = t $. The cotype of $M$, denoted by $ j_R(M) $ is defined as 
	$ j_R(M) = \vdim_k\big(\Tor^R_t(k,M)\big) $. }
\end{defn}   
\begin{rem}
\emph{Let $M$ be artinian, $ x \in \fm$ an $R$-regular and $M$-coregular element. Then, according to Lemma 4.8, we have the equalities \\ 
	\centerline{$ j_R(M) = j_R(\Hom_R(R/xR,M)) = j_{R/xR}(\Hom_R(R/xR,M))  $.}
	Also, the equality $ j_R(M) = j_{\widehat{R}}(M) $ holds. Moreover, the equality $ j_R(M) = r_R(M^{\vee}) $ holds, since by \cite[Theorem 3.2.13]{EJ1}, we have the isomorphism $ \Tor^R_i(k,M)^{\vee} \cong \Ext_R^i(k,M^{\vee}) $ for all $ i \geq 0 $.}
\end{rem}
In \cite{RB}, R.N.Roberts has introduced a dual notion of dimension
for Artinian modules. In \cite{I}, M.Inoue has introduced the notion of \textit{co-Cohen-Macaulayness} for artinian modules. Finally, in \cite{Y2}, S.Yassemi has defined a dual notion of dimension for modules, namely the \textit{Magnitude} of modules, which agrees with the definition of R.N.Roberts for artinian modules. Following \cite{Y1}, an $R$-module $M$ is \textit{cocyclic} if it is a submodule of $ E(R/\fm) $ for some $ \fm \in \max(R) $. Also, if  $M$ is an $R$-module, then we say that $\fp \in \Spec(R)$ is a \textit{coassociated prime ideal} of $M$, precisely when there exists a cocyclic homomorphic image $N$ of $ M$ for which $ \fp = \Ann_R(N) $. The set of coassociated prime ideal of $M$ is denoted by $ \cass_R(M) $. Following \cite{Y2}, the magnitude of $M$, denoted by
$ \mg_R(M) $, is defined to be the supremum of $ \dim_R(R/\fp) $ where $\fp$ runs over the set $\cass_R(M)$. An $R$-module $M$ is said to be  \textit{co-Cohen-Macaulay} (abbreviated to CCM), precisely when $ \mg_R(M) = \width_R(M) $.
\begin{defn}
\emph{Let $M$ be a non-zero artinian $R$-module. We say that $M$ is a maximal co-Cohen-Macaulay (abbreviated to MCCM), precisely when
$ \mg_R(M) = \width_R(M) = \dim(R) $.}
\end{defn}
For example, if $R$ is Cohen-Macaulay, then $ E(R/\fm) $ is a MCCM $R$-module. In fact, the codualizing $R$-module, if exists, is MCCM.
\begin{cor}
Let $M$ be a \emph{MCCM} $R$-module of finite projective dimension and let $T$ be codualizing. Then $M \cong T^n$ for some $ n \in \mathbb{N} $.	In particular, if $ j_R(M) = 1 $, then  $ M $ is codualizing.
\end{cor}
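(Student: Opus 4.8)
The plan is to apply Theorem 4.13(i), which says that any artinian module in $\mathcal{P}^{art}(R)$ whose width equals $\dim(R)$ is a finite direct sum of copies of the codualizing module $T$. So the first step is to verify that a MCCM $R$-module $M$ of finite projective dimension satisfies the hypotheses of that theorem: it is artinian (true by definition of MCCM), it has finite projective dimension (given), and $\width_R(M) = \dim(R)$. The last equality is immediate from the definition of MCCM, since $\mg_R(M) = \width_R(M) = \dim(R)$ in particular forces $\width_R(M) = \dim(R)$. Hence Theorem 4.13(i) applies directly and yields $M \cong T^n$ for some $n \in \mathbb{N}$.

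For the "in particular" clause, suppose moreover that $j_R(M) = 1$. From $M \cong T^n$ we compute the cotype: writing $t = \width_R(R) = \dim(R)$ (using Proposition 2.4(v), $\width_R(T) = \depth(R) = \dim(R)$ since $R$ is CM once a codualizing module exists, by Remark 4.5), we have $\Tor^R_t(k, M) \cong \Tor^R_t(k, T)^n$, so $j_R(M) = n \cdot j_R(T)$. Since $j_R(T) \geq 1$ (as $T \neq 0$ means $\Tor^R_t(k,T) \neq 0$), the hypothesis $j_R(M) = 1$ forces $n = 1$ and $j_R(T) = 1$, hence $M \cong T$ is codualizing.

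I do not expect a serious obstacle here, since the corollary is essentially a packaging of Theorem 4.13(i) together with additivity of $\Tor^R_t(k,-)$ over direct sums. The one point requiring a little care is the bookkeeping of widths: one must make sure that $\width_R(M) = \dim(R)$ is exactly the hypothesis needed in Theorem 4.13(i), and that the reduction "$R$ complete" made there (via Remark 5.3, $j_R(M) = j_{\widehat R}(M)$, and the fact that $\width$, $\mg$, and finiteness of $\pd$ are all insensitive to completion) causes no trouble. Everything else is a direct citation.
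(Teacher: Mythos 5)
Your argument is exactly the paper's: the corollary is a direct application of the theorem asserting that an artinian module of finite projective dimension with width equal to $\dim(R)$ is isomorphic to $T^m$, and your additivity-of-$\Tor$ argument for the case $j_R(M)=1$ fills in the routine detail the paper leaves implicit. (Only a typo to fix: you wrote $t=\width_R(R)$ where you mean $t=\width_R(M)=\width_R(T)$.)
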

\begin{proof}
Just use Theorem 4.12(i).
\end{proof}
\begin{lem}
	Let $T$ be codualizing and $ d = \emph{dim}(R) $.
	Then $ \emph{\Ext}^i_R(T,k) = 0 =  \emph{\Tor}_i^R(T,k)$ for all $ i \neq d $ and that $ \emph{\Ext}^d_R(T,k) \cong k \cong  \emph{\Tor}_d^R(T,k)$
\end{lem}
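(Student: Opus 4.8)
The plan is to reduce to the case where $R$ is complete and then, via Matlis duality, convert the whole statement into the well-understood computation of the Bass numbers of the dualizing module $T^{\vee}$.

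\emph{Reduction to the complete case.} Since $T$ is codualizing, $\pd_R(T) = d$ by Proposition 4.1(ii); fixing a resolution of $T$ by finitely generated free modules, the complexes computing $\Ext^i_R(T,k)$ and $\Tor^R_i(T,k)$ have all terms annihilated by $\fm$, so these modules are $k$-vector spaces. As $T \otimes_R \widehat{R} \cong T$ and $k \otimes_R \widehat{R} \cong k$, flat base change along $R \to \widehat{R}$ (valid because $T$ admits a bounded resolution by finitely generated frees) yields $\Ext^i_R(T,k) \cong \Ext^i_{\widehat{R}}(T,k)$ and $\Tor^R_i(T,k) \cong \Tor^{\widehat{R}}_i(T,k)$, while $\dim(\widehat{R}) = d$. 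So I may assume $R$ is complete. Then $T$ is Matlis reflexive, $D := T^{\vee}$ is dualizing for $R$ by Remark 4.5, and $T \cong D^{\vee} = \Hom_R(D,E(R/\fm))$.

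\emph{The key identification.} I would next prove $\Tor^R_i(T,k) \cong \Ext^i_R(k,D)^{\vee}$ for every $i$. Choose a resolution $F_\bullet \to k$ by finitely generated free $R$-modules, so that $\Ext^i_R(k,D) = \H^i(\Hom_R(F_\bullet,D))$. Applying the exact functor $(-)^{\vee}$ and using, for each finitely generated free $F_j$, the natural isomorphism $\Hom_R(\Hom_R(F_j,D),E(R/\fm)) \cong \Hom_R(D,E(R/\fm)) \otimes_R F_j = T \otimes_R F_j$, one identifies $\Hom_R(F_\bullet,D)^{\vee}$ with the complex $T \otimes_R F_\bullet$, whose $i$-th homology is $\Tor^R_i(T,k)$; this gives the claim. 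Since $D$ is dualizing it is maximal Cohen--Macaulay of type one with $\id_R(D) = d$ (exactly what was used in the proof of Theorem 4.14), so $\Ext^i_R(k,D) = 0$ for $i \neq d$ and $\Ext^d_R(k,D) \cong k$. Hence $\Tor^R_i(T,k) = 0$ for $i \neq d$ and $\Tor^R_d(T,k) \cong k^{\vee} \cong k$.

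\emph{Passing to $\Ext$.} For any $R$-modules $A,B$ there is a natural isomorphism $\Tor^R_i(A,B)^{\vee} \cong \Ext^i_R(A,\Hom_R(B,E(R/\fm)))$, obtained by applying $(-)^{\vee}$ to $P_\bullet \otimes_R B$ for a projective resolution $P_\bullet \to A$. With $A = T$ and $B = k$, and since $k^{\vee} \cong k$, this gives $\Ext^i_R(T,k) \cong \Tor^R_i(T,k)^{\vee}$, so the values computed above give $\Ext^i_R(T,k) = 0$ for $i \neq d$ and $\Ext^d_R(T,k) \cong k$. The only steps demanding care are the bookkeeping in the two Matlis-duality isomorphisms — in particular keeping the modules in sight finitely generated so that $(-)^{\vee}$ interchanges $\Hom_R(-,E(R/\fm))$ and $\otimes_R$ as used — together with invoking the standard fact that a dualizing module has all Bass numbers zero except $\mu^d = 1$; everything else is formal, so I expect no serious obstacle.
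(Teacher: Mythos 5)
Your argument is correct in substance, but it reaches the Tor computation by a genuinely different route from the paper. The paper uses the structure theorem for the minimal flat resolution of $T$ (from [RT2, Corollary 4.5]): the terms $\prod_{\h(\fp)=i} T_{\fp}$ with $\fp\neq\fm$ die after tensoring with $k$ because some $x\in\fm\smallsetminus\fp$ acts both invertibly and as zero, leaving only $\widehat{R}\otimes_R k\cong k$ in degree $d$. You instead identify $\Tor^R_i(T,k)$ with $\Ext^i_R(k,D)^{\vee}$ for $D=T^{\vee}$ by applying Matlis duality to $\Hom_R(F_\bullet,D)$ for a finite free resolution $F_\bullet$ of $k$, and then quote the Bass numbers of a dualizing module. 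Your route is more elementary in that it avoids the cotorsion-theoretic input on minimal flat resolutions and reduces everything to the standard fact $\mu^i(\fm,D)=\delta_{id}$; the paper's route is more self-contained within its own framework and displays the flat resolution explicitly, which is reused elsewhere. The final step, deducing the $\Ext$ statement from the $\Tor$ statement via $\Tor^R_i(T,k)^{\vee}\cong\Ext^i_R(T,k^{\vee})\cong\Ext^i_R(T,k)$, is identical in both arguments.

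One point needs repair: in your reduction to the complete case you justify flat base change by asserting that $T$ admits a bounded resolution by finitely generated free modules. This is false whenever $d>0$: a module with such a resolution is finitely generated, and a finitely generated Artinian module has finite length, which would force $\widehat{R}\cong\Hom_R(T,T)$ to have finite length and hence $\dim(R)=0$. The reduction itself is still fine, but for a different reason: since $T$ is Artinian it is already an $\widehat{R}$-module with $\widehat{R}\otimes_R T\cong T$, and since $k$ is an $\widehat{R}$-module one gets $\Tor^R_i(T,k)\cong\Tor^{\widehat{R}}_i(T,k)$ and $\Ext^i_R(T,k)\cong\Ext^i_{\widehat{R}}(T,k)$ by base-changing a projective resolution (of $k$, respectively of $T$) along the flat map $R\to\widehat{R}$; the finiteness of these vector spaces, which you need for $k^{\vee}\cong k$ manipulations, is supplied by [KLW1] as the paper notes elsewhere. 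With that justification corrected, the proof goes through.
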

\begin{proof}
We can assume that $R$ is complete.	By \cite[Corollary 4.5]{RT2} and \cite[Theorem 3.1]{K}, the minimal flat resolution of $T$ is of the form \\
	\centerline{$ 0 \longrightarrow \widehat{R_{\fm}} \longrightarrow \underset{\emph{\h}(\fp) = d-1} \prod T_{\fp} \longrightarrow \cdots  \longrightarrow \underset{\emph{\h}(\fp) = 0} \prod T_{\fp} \longrightarrow T \longrightarrow 0 $,}
	in which  $ T_{\fp} $ is the completion of a free
	$R_{\fp}$-module with respect to $ \fp R_{\fp} $-adic topology.
	For any prime ideal $\fp \neq \fm$,  we can take an element
	$ x \in \fm \smallsetminus \fp $. Now, multiplication of $ x $ induces an automorphism on $ E(R/ \fp) $ and hence on $ \prod T_{\fp} $, and that $ xk = 0 $. Consequently, multiplication of $ x $ on $ k \otimes_R \Big( \prod T_{\fp} \Big) $ is both an isomorphism and zero. Therefore, we have $ \Tor^R_i(T,k) = 0 $ for all $ i \neq d $. On the other hand, we have \\
	\centerline{$ \Tor^R_d(T,k) = \widehat{R_{\fm}} \otimes_R k \cong k $.}
	Finally, for all $ i \geq 0 $, we have\\
	\centerline{$ \Tor^R_i(T,k)^{\vee} \cong \Ext^i_R(T,k^{\vee}) \cong \Ext^i_R(T,k)$,}
	in which the first isomorphism is from \cite[Theorem 3.2.1]{EJ1}. Now, since $ k \cong k^{\vee} $, we get the desired isomorphisms.
\end{proof}
\begin{lem}
	Let $M$ be an artinian \emph{CCM} $R$-module with $ \emph{\mg}_R(M)=t $ and let $T$ be codualizing and $ d = \emph{dim}(R) $.
	Then $ \emph{\Ext}^i_R(T,M) = 0 $ for all $ i \neq d-t $ and that $ \emph{\Ext}^{d-t}_R(T,M) $ is Cohen-Macaulay of dimension $ t $.
\end{lem}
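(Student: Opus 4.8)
The plan is to reduce the assertion, via Matlis duality, to the classical behaviour of the canonical module of a Cohen--Macaulay local ring. One may assume $R$ is complete. Then, by Remark 4.5, the module $D := T^{\vee}$ is dualizing for $R$; in particular $R$ is Cohen--Macaulay (so $D$ is a canonical module of $R$) and $T \cong D^{\vee}$. Put $N := M^{\vee}$, which is a finitely generated $R$-module, and translate the hypotheses on $M$ into properties of $N$: using $\Tor^R_i(k,M)^{\vee} \cong \Ext^i_R(k,M^{\vee})$ (see \cite[Theorem 3.2.13]{EJ1}) one gets $\depth_R(N) = \width_R(M) = t$, and by the Matlis-duality identity $\mg_R(M) = \dim_R(M^{\vee})$ (see \cite{Y2}) one gets $\dim_R(N) = \mg_R(M) = t$. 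Hence $N$ is a Cohen--Macaulay $R$-module of dimension $t$.

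The heart of the proof is the isomorphism $\Ext^i_R(T,M) \cong \Ext^i_R(N,D)$ for every $i \geq 0$. I would derive it from two standard Matlis-duality isomorphisms: for all $R$-modules $X,Y$ one has $\Ext^i_R(X,Y^{\vee}) \cong \Tor^R_i(X,Y)^{\vee}$ (since $E(R/\fm)$ is injective), and for finitely generated $Y$ one has $\Tor^R_i(X^{\vee},Y) \cong \Ext^i_R(Y,X)^{\vee}$ (resolve $Y$ by finitely generated free modules and apply the exact functor $(-)^{\vee}$). Applying the first with $X = T$, $Y = M^{\vee}$ and then the second with $X = T^{\vee} = D$, $Y = M^{\vee} = N$, and using that $T$ and the finitely generated module $\Ext^i_R(N,D)$ are Matlis reflexive over the complete ring $R$, one obtains
$$\Ext^i_R(T,M) \cong \Tor^R_i(T,M^{\vee})^{\vee} = \Tor^R_i(D^{\vee},N)^{\vee} \cong \Ext^i_R(N,D)^{\vee\vee} \cong \Ext^i_R(N,D).$$

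Finally, $N$ is a Cohen--Macaulay module of dimension $t$ over the $d$-dimensional Cohen--Macaulay local ring $R$ with canonical module $D$, so local duality gives $\Ext^i_R(N,D) \cong \H^{d-i}_{\fm}(N)^{\vee}$; since $\H^j_{\fm}(N) = 0$ for $j \neq t$, this forces $\Ext^i_R(N,D) = 0$ for $i \neq d-t$, while $\Ext^{d-t}_R(N,D)$ is again Cohen--Macaulay of dimension $t$ (the canonical module of a Cohen--Macaulay module is Cohen--Macaulay of the same dimension; see \cite{BS}). Transporting this through the isomorphism of the previous paragraph gives $\Ext^i_R(T,M) = 0$ for $i \neq d-t$ and that $\Ext^{d-t}_R(T,M)$ is Cohen--Macaulay of dimension $t$, as required.

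The step needing the most care is the key isomorphism, where one must check that each Matlis-duality isomorphism applies (the second requires $N$ finitely generated, which holds) and that every module run through $(-)^{\vee\vee}$ --- the artinian modules $T,M$ and the finitely generated module $\Ext^i_R(N,D)$ --- is Matlis reflexive over the complete ring $R$, so that no information is lost. A more self-contained route for the last step, in the spirit of the paper, is induction on $t$: the case $t = 0$ (finite length) follows by d\'{e}vissage from the preceding lemma (which computes $\Ext^i_R(T,k)$), and for $t > 0$ one picks $x \in \fm$ that is simultaneously $R$-regular and $N$-regular (possible because $\depth R = d \geq t > 0$ and $\depth N = t > 0$), passes to $R/xR$ via Lemma 4.6 and the coregular sequence $0 \to \Hom_R(R/xR,T) \to T \overset{x}\longrightarrow T \to 0$, and uses a change-of-rings spectral sequence as in the proof of Lemma 2.3.
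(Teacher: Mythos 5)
Your argument is correct, but it is genuinely different from the paper's. The paper proves the lemma by induction on $t=\mg_R(M)$ entirely on the artinian side: the base case $t=0$ is handled by d\'evissage along a composition series using Lemma 5.5 (for vanishing in degrees $i<d$) together with $\pd_R(T)=d$ (for $i>d$), and the inductive step uses an $M$-coregular element $x$, the long exact sequence of $\Ext_R(T,-)$ applied to $0\to\Hom_R(R/xR,M)\to M\overset{x}\to M\to 0$, and Nakayama's lemma on the finitely generated $\widehat{R}$-modules $\Ext^i_R(T,M)$. You instead pass through Matlis duality once and for all: $N=M^{\vee}$ is a finitely generated Cohen--Macaulay module of dimension $t$, the chain $\Ext^i_R(T,M)\cong\Tor^R_i(T,M^{\vee})^{\vee}\cong\Ext^i_R(N,D)^{\vee\vee}\cong\Ext^i_R(N,D)$ (valid since $N$ is finitely generated and everything in sight is Matlis reflexive over the complete ring) reduces the claim to the classical statement that $\Ext^i_R(N,\omega_R)=0$ for $i\neq d-t$ and $\Ext^{d-t}_R(N,\omega_R)$ is Cohen--Macaulay of dimension $t$ \cite[Theorem 3.3.10]{BH}. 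Your route is shorter and makes transparent that the lemma is precisely the Matlis dual of the classical duality for Cohen--Macaulay modules over a Cohen--Macaulay ring with canonical module (note, incidentally, that the paper goes in the opposite direction: it proves the lemma directly and then obtains its Corollary 5.7 by the duality you start from). The paper's induction buys self-containedness and stays on the artinian side throughout, which matches the style of its other arguments. Two small points of care, both of which you essentially flag: the reduction to the complete case should be justified (as the paper does elsewhere, $\Ext^i_R(T,M)\cong\Ext^i_{\widehat{R}}(T,M)$ for artinian $T$), and the identity $\mg_R(M)=\dim_R(M^{\vee})$ is obtained from \cite[Lemma 2.2]{Y2} applied to the finitely generated module $M^{\vee}$ together with reflexivity of $M$.
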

\begin{proof}
	Induct on $ t $. If $ t = 0 $, then $M$ is of finite length by \cite[Lemma 2.9]{Y2}. Now, by using a composition series for $M$ in conjunction with Lemma 5.5, we have $ \Ext^i_R(T,M) = 0$ for all $ i < d $. Also, by Proposition 4.1(ii), $ \Ext^i_R(T,M) = 0$ for all $ i > d $. Moreover, since $ \Ext^d_R(T,M) $ is of finite length, $ \Ext^d_R(T,M) $ is Cohen-Macaulay of dimension $ 0 $. Now, assume inductively that $ t > 0 $. Then, we can find an element $ x \in \fm $ which is $M$-coregular. The exact sequence \\
		\centerline{$ 0 \longrightarrow \Hom_R(R/xR,M) \longrightarrow M \overset{x}\longrightarrow M \longrightarrow 0 $,}
		induces a long exact sequence \\
\centerline{$ \cdots \overset{x}\rightarrow \Ext^{i-1}_R(T,M) \rightarrow \Ext^i_R(T,\Hom_R(R/xR,M)) \rightarrow \Ext^i_R(T,M) \overset{x}\rightarrow \Ext^i_R(T,M) \rightarrow \cdots $.}
Since, by \cite[Theorem 3.3]{Y2}, $ \Hom_R(R/xR,M) $ is CCM of dimension $ t-1 $, by induction hypothesis, we have $ \Ext^i_R(T,\Hom_R(R/xR,M)) = 0$ for all 
$ i \neq d-(t-1) $ and $ \Ext^{d-(t-1)}_R(T,\Hom_R(R/xR,M)) $  is Cohen-Macaulay of dimension $t-1$. Hence, by Nakayama's lemma, we have $ \Ext^i_R(T,M) = 0$ for all $ i \neq d - t $. Finally, the exact sequence \\
\centerline{$ 0 \rightarrow \Ext^{d-t}_R(T,M) \overset{x}\rightarrow \Ext^{d-t}_R(T,M) \rightarrow \Ext^{d-(t-1)}_R(T,\Hom_R(R/xR,M)) \rightarrow 0 $,}
shows that $ \Ext^{d-t}_R(T,M)/x\Ext^{d-t}_R(T,M) $ is  Cohen-Macaulay of dimension $t-1 $, whence $ \Ext^{d-t}_R(T,M) $ is  Cohen-Macaulay of dimension $ t $, as wanted.
\end{proof}
\begin{cor}
		Let $M$ be an Cohen-Macaulay $R$-module with $ \emph{\dim}_R(M)=t $ and let $T$ be codualizing and $ d = \emph{dim}(R) $.
		Then $ \emph{\Tor}_i^R(T,M) = 0 $ for all $ i \neq d-t $ and that $ \emph{\Tor}_{d-t}^R(T,M) $ is \emph{CCM} of magnitude $ t $.
\end{cor}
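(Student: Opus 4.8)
The plan is to obtain Corollary 5.7 as the Matlis dual of Lemma 5.6, under the dictionary Cohen--Macaulay $\leftrightarrow$ co-Cohen--Macaulay, $\dim \leftrightarrow \mg$, $\depth \leftrightarrow \width$, finitely generated $\leftrightarrow$ artinian, and $\Ext^i_R(T,-) \leftrightarrow \Tor^R_i(T,-)^\vee$. First I would reduce to the case that $R$ is complete: a resolution of the finitely generated module $M$ by finitely generated free modules remains such after applying the functor $-\otimes_R \widehat{R}$, the module $T$ is already an $\widehat{R}$-module, and the invariants $\width$ and $\mg$ of an artinian module are unchanged under completion. Tensoring such a resolution with $T$ also shows that each $\Tor^R_i(T,M)$ is a subquotient of a finite direct sum of copies of $T$, hence artinian, and in particular Matlis reflexive.

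Next I would set up the dual dictionary for $M$. Since $M$ is Cohen--Macaulay of dimension $t$, we have $\depth_R(M) = \dim_R(M) = t$; by the standard Matlis-duality identities $\width_R(M^\vee) = \depth_R(M)$ and $\mg_R(M^\vee) = \dim_R(M)$ (the latter because $\cass_R(M^\vee) = \Ass_R(M)$), it follows that $\width_R(M^\vee) = \mg_R(M^\vee) = t$. Thus $M^\vee$ is a co-Cohen--Macaulay $R$-module of magnitude $t$, so Lemma 5.6 applies to it and gives $\Ext^i_R(T, M^\vee) = 0$ for all $i \neq d-t$, while $\Ext^{d-t}_R(T, M^\vee)$ is Cohen--Macaulay of dimension $t$. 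The bridge back to $\Tor$ is the natural isomorphism $\Tor^R_i(T,M)^\vee \cong \Ext^i_R(T, M^\vee)$, valid for all $i$; this is essentially \cite[Theorem 3.2.13]{EJ1}, and it holds with the non-finitely generated module $T$ in the first argument because $E(R/\fm)$ is injective.

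From $\Ext^i_R(T, M^\vee) = 0$ and faithfulness of Matlis duality I would conclude $\Tor^R_i(T,M) = 0$ for all $i \neq d-t$. For the remaining degree, put $N = \Ext^{d-t}_R(T, M^\vee) \cong \Tor^R_{d-t}(T,M)^\vee$; as the Matlis dual of an artinian module over the complete ring $R$, $N$ is finitely generated, and by Lemma 5.6 it is Cohen--Macaulay of dimension $t$, so $\depth_R(N) = \dim_R(N) = t$. Since $\Tor^R_{d-t}(T,M)$ is Matlis reflexive, $\Tor^R_{d-t}(T,M) \cong N^\vee$, and the same dictionary gives $\mg_R(N^\vee) = \dim_R(N) = t$ and $\width_R(N^\vee) = \depth_R(N) = t$; hence $\Tor^R_{d-t}(T,M)$ is co-Cohen--Macaulay of magnitude $t$, as claimed. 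I do not expect a genuine obstacle: all the substance is in Lemma 5.6, and the remaining care is bookkeeping --- checking that the modules in play are artinian (resp. finitely generated) and hence Matlis reflexive, and that the Ext/Tor--Matlis duality isomorphisms are available with $T$, which is not finitely generated, in the first slot. (Alternatively one could imitate the inductive proof of Lemma 5.6 directly, using Lemma 4.8 and Lemma 5.5, but dualizing is cleaner.)
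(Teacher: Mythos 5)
Your proposal is correct and follows essentially the same route as the paper: pass to $M^\vee$, which is co-Cohen--Macaulay of magnitude $t$ by the standard Matlis-duality identities for $\width$ and $\mg$, apply Lemma 5.6 to it, and transfer the conclusion back via the isomorphism $\Tor_i^R(T,M)^\vee \cong \Ext^i_R(T,M^\vee)$. The extra bookkeeping you include (completeness reduction, artinian-ness and Matlis reflexivity of the $\Tor$ modules) is sound and only makes explicit what the paper leaves implicit.
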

\begin{proof}
	Note that $ \depth_R(M) = \width_R(M^{\vee}) $, and by \cite[Lemma 2.2]{Y2} we have the equality $ \dim_R(M) = \mg_R(M^{\vee}) $. Hence $ M^{\vee} $ ic CCM $R$-module of magnitude $ t $. Thus, by Lemma 5.6, we have  $ \Ext^i_R(T,M^{\vee}) = 0$ for all $ i \neq d - t $ and that $ \Ext^{d-t}_R(T,M^{\vee}) $ is  Cohen-Macaulay of dimension $ t $. On the other hand, by \cite[Theorem 3.2.1]{EJ1}, we have the isomorphism 
	$ \Tor_i^R(T,M)^{\vee} \cong \Ext^i_R(T,M^{\vee}) $ for all $ i \geq 0 $. This completes the proof.
\end{proof}
\begin{cor}
	Let $M$ be \emph{MCCM} $R$-module and let $T$ be codualizing. Then we have the isomorphism $ T \otimes_R \emph{\Hom}_R(T,M) \cong M $.
\end{cor}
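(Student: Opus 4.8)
The plan is to pass to the completion and then transport the identity across Matlis duality, where it becomes the classical biduality for maximal Cohen--Macaulay modules over a Cohen--Macaulay ring with a dualizing module.

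First I would reduce to the case that $R$ is complete: $M$ remains a non-zero artinian module with $\width_R(M)$ and $\mg_R(M)$ unchanged, so it is still MCCM, and $T$ stays codualizing. Write $D=T^{\vee}$; by the remark following Definition 4.3, $D$ is a dualizing $R$-module, and $T\cong D^{\vee}$ because $T$ is Matlis reflexive. As in the proof of Corollary 5.7 one has $\depth_R(M^{\vee})=\width_R(M)=d$ and $\dim_R(M^{\vee})=\mg_R(M)=d$ (using \cite[Lemma 2.2]{Y2}), so $N:=M^{\vee}$ is a maximal Cohen--Macaulay $R$-module. Since $D$ is dualizing and $N$ is maximal Cohen--Macaulay, $N\in\mathcal{G}_{D}$; in particular $N^{\dagger}:=\Hom_R(N,D)$ is again maximal Cohen--Macaulay and the natural biduality map $N\to\Hom_R(N^{\dagger},D)$ is an isomorphism (this is classical; see, e.g., \cite{C}).

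The computation is then a string of natural isomorphisms. Because $M$ is Matlis reflexive, Hom--tensor adjointness gives, for any $R$-module $X$, $\Hom_R(X,M)\cong(X\otimes_R M^{\vee})^{\vee}\cong\Hom_R(M^{\vee},X^{\vee})$; with $X=T$ this reads $\Hom_R(T,M)\cong\Hom_R(M^{\vee},T^{\vee})=N^{\dagger}$, hence $T\otimes_R\Hom_R(T,M)\cong D^{\vee}\otimes_R N^{\dagger}$. Next I would record the auxiliary identity $D^{\vee}\otimes_R B\cong\Hom_R(B,D)^{\vee}$ for every finitely generated $R$-module $B$: this follows by applying the right exact functors $D^{\vee}\otimes_R-$ and $(-)^{\vee}\circ\Hom_R(-,D)$ to a finite free presentation $R^{a}\to R^{b}\to B\to 0$ and noting that they agree compatibly on free modules, $(-)^{\vee}$ being exact. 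Applying this with $B=N^{\dagger}$ and using $\Hom_R(N^{\dagger},D)\cong N$ from the previous step yields
\[
T\otimes_R\Hom_R(T,M)\;\cong\;D^{\vee}\otimes_R N^{\dagger}\;\cong\;\Hom_R(N^{\dagger},D)^{\vee}\;\cong\;N^{\vee}\;=\;M^{\vee\vee}\;\cong\;M,
\]
as desired.

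Most of this is bookkeeping with adjunction and Matlis duality; the one substantive external input is the biduality $\Hom_R(\Hom_R(N,D),D)\cong N$ for maximal Cohen--Macaulay $N$ over the Cohen--Macaulay ring $R$ with dualizing module $D$, so pinning down (and correctly citing) that fact is the step I expect to require the most care, together with verifying the identity $D^{\vee}\otimes_R B\cong\Hom_R(B,D)^{\vee}$. If one wished to show in addition that the isomorphism above is realized by the canonical evaluation map $\theta_{TTM}$, one would need to track naturality through the adjunction isomorphisms, but the statement as given does not demand this.
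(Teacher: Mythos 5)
Your argument is correct, but it is not the route the paper takes. The paper's proof stays entirely on the ``artinian side'': it invokes the degenerating spectral sequence $\E^{p,q}_2=\Tor_p^R(T,\Ext^q_R(T,M))\Rightarrow \Ext^n_R(R,M)$, kills the off-diagonal terms using Lemma 5.6 (an MCCM module $M$ has $\Ext^q_R(T,M)=0$ for $q>0$ and $\Hom_R(T,M)$ maximal Cohen--Macaulay) and Corollary 5.7 ($\Tor_p^R(T,-)$ vanishes in positive degrees on MCM modules), and reads off $T\otimes_R\Hom_R(T,M)\cong\Hom_R(R,M)\cong M$ --- in effect showing $M$ lies in the Bass class of $T$. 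You instead transport everything through Matlis duality to the noetherian side, where the statement becomes the classical canonical-module biduality $N\cong\Hom_R(\Hom_R(N,D),D)$ for the MCM module $N=M^{\vee}$ (Bruns--Herzog, Theorem 3.3.10, would be the cleaner citation than \cite{C}), plus the right-exactness comparison $D^{\vee}\otimes_R B\cong\Hom_R(B,D)^{\vee}$. Your steps all check out: $M^{\vee}$ is indeed MCM because $\depth_R(M^{\vee})=\width_R(M)$ and $\dim_R(M^{\vee})=\mg_R(M)$, the adjunction $\Hom_R(T,M)\cong\Hom_R(M^{\vee},T^{\vee})$ is valid since $M$ is Matlis reflexive over the complete ring, and the free-presentation argument for $D^{\vee}\otimes_R-\cong(\Hom_R(-,D))^{\vee}$ is sound. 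What the paper's approach buys is uniformity with the surrounding section (Lemma 5.6 and Corollary 5.7 are already in hand, so the corollary is three lines) and it avoids any appeal to canonical-module theory; what yours buys is independence from the spectral-sequence machinery and a transparent identification of the isomorphism as the Matlis dual of classical biduality --- at the cost, as you note, of not verifying that it coincides with the evaluation map $\theta_{TTM}$, which neither proof is obliged to do for the statement as written.
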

\begin{proof}
	We can assume that $R$ is complete. Since $ \Ext^i(T,T) = 0 $ for all $ i > 0 $, we can consider the following spectral sequence \\
	\centerline{$ \E^{p,q}_2 = \Tor_p^R(T , \Ext^q_R(T,M)) \underset{p}\Longrightarrow \Ext^n_R(R,M) $.}
	By Lemma 5.6, $  \Ext^q_R(T,M) = 0 $ for all $ q > 0 $ and that $ \Hom_R(T,M) $ is MCM. Hence, Corollary 5.7 yields that
	$  \Tor_p^R(T,\Hom_R(T,M)) = 0 $ for all $ p > 0 $. Thus, we get the isomorphism \\
	\centerline{$ T \otimes_R \Hom_R(T,M) \cong \Hom_R(R,M) \cong M $,}
	as wanted.
\end{proof}
\begin{thm}
Let $M$ be an artinian \emph{CCM} $R$-module with $ \emph{\mg}_R(M)=t $ and let $T$ be codualizing and $ d = \emph{dim}(R) $.
	\begin{itemize}
		\item[(i)]{$j_R(M) = \nu_R\big( \emph{\Ext}^{d-t}_R(T,M) \big) = \emph{\vdim}_k \Big(\emph{\Soc}_R\big(\emph{\Tor}_{d-t}^R(T,M^{\vee}) \big)\Big)$}
		\item[(ii)]{$\emph{\vdim}_k \big(\emph{\Soc}_R(M)\big) = r_R\big( \emph{\Ext}^{d-t}_R(T,M) \big) = j_R\big(\emph{\Tor}_{d-t}^R(T,M^{\vee}) \big)$.}
	\end{itemize}
\end{thm}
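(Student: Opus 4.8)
The plan is to pass to the completion and then translate everything, via Matlis duality, into the classical theory of canonical modules of Cohen--Macaulay modules. First I would reduce to $R$ complete: every quantity in the statement ($\width$, $\mg$, $j_R$, $r_R$, $\nu_R$, $\vdim_k\Soc_R$) and every module occurring ($\Tor^R_i(T,-)$, $\Ext^i_R(T,-)$, $(-)^\vee$) is unchanged by $\fm$-adic completion, as recorded for the cohomological invariants in Remark 5.2. Over complete $R$, Remark 4.5 identifies $T^\vee$ with the dualizing (canonical) module $\omega:=\omega_R$, so $T\cong\omega^\vee$. Put $P:=M^\vee$ and $c:=d-t$. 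Because $M$ is artinian co-Cohen--Macaulay with $\mg_R(M)=t$, the module $P$ is finitely generated with $\dim_R(P)=\mg_R(M)=t$ and $\depth_R(P)=\width_R(M)=t$; hence $P$ is Cohen--Macaulay of dimension $t$, so $\Ext^i_R(P,\omega)=0$ for $i\neq c$ and $K:=\Ext^c_R(P,\omega)$ is the canonical module of $P$ (Cohen--Macaulay of dimension $t$, with $K_K\cong P$).

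The heart of the argument is the isomorphism $\Ext^c_R(T,M)\cong K$, together with its Matlis dual $\Tor^R_c(T,M^\vee)\cong K^\vee$. I would prove these by writing $M\cong P^\vee$, $T\cong\omega^\vee$ and chaining the Ext--Tor--Matlis duality isomorphisms of \cite[Theorems 3.2.1 and 3.2.13]{EJ1}, already used repeatedly in the paper: for every $i$,
\[
\Ext^i_R(T,M)\cong\Tor^R_i(T,P)^\vee=\Tor^R_i(P,\omega^\vee)^\vee\cong\Ext^i_R(P,\omega)^{\vee\vee}\cong\Ext^i_R(P,\omega),
\]
the last step using Matlis reflexivity of the finitely generated module $\Ext^i_R(P,\omega)$. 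Setting $i=c$ gives the two isomorphisms (and, as a byproduct, recovers the vanishing and Cohen--Macaulayness statements of Lemma 5.6 and Corollary 5.7).

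The rest is bookkeeping. On the Matlis side, Remark 5.2 gives $j_R(M)=r_R(P)$ and $j_R(\Tor^R_c(T,M^\vee))=r_R(\Tor^R_c(T,M^\vee)^\vee)=r_R(\Ext^c_R(T,M))$, while the adjunction $\Hom_R(k,X^\vee)\cong(k\otimes_R X)^\vee$ gives $\vdim_k\Soc_R(M)=\nu_R(P)$ and $\vdim_k\Soc_R(\Tor^R_c(T,M^\vee))=\nu_R(\Ext^c_R(T,M))$. On the classical side I would use that the minimal number of generators of the canonical module of a Cohen--Macaulay module is its type, i.e. $\nu_R(K)=r_R(P)$, and, applying this to $K$ (whose canonical module is $P$), $r_R(K)=\nu_R(P)$; see \cite[\S3.3]{BH}. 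Assembling the equalities,
\[
j_R(M)=r_R(P)=\nu_R(K)=\nu_R(\Ext^{d-t}_R(T,M))=\vdim_k\Soc_R(\Tor^R_{d-t}(T,M^\vee)),
\]
\[
\vdim_k\Soc_R(M)=\nu_R(P)=r_R(K)=r_R(\Ext^{d-t}_R(T,M))=j_R(\Tor^R_{d-t}(T,M^\vee)),
\]
which are (i) and (ii).

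I expect the only genuine difficulty to be the identification $\Ext^{d-t}_R(T,M)\cong\Ext^{d-t}_R(M^\vee,\omega_R)$: one must keep the iterated Matlis and Ext--Tor dualities straight and verify the finiteness hypotheses at each appeal to \cite{EJ1}; everything afterwards is the standard relationship between generators, type and the canonical module of a Cohen--Macaulay module. An alternative avoiding canonical-module theory is to induct on $t$ as in the proofs of Lemmas 5.5--5.7: the base case $t=0$ is $\Ext^d_R(T,M)\cong M$ for $M$ of finite length, and the inductive step cuts $M$ down by an $R$-regular, $M$-coregular element $x$, using Lemma 4.6(ii), Lemma 5.6, Remark 5.2, and the change-of-rings isomorphism $\Ext^i_R(T,\Hom_R(R/xR,M))\cong\Ext^{i-1}_{R/xR}(\Hom_R(R/xR,T),\Hom_R(R/xR,M))$.
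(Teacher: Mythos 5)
Your proposal is correct and follows essentially the same route as the paper: reduce to the complete case, identify $T\cong\omega_R^{\vee}$ and $M^{\vee}$ as a Cohen--Macaulay module of dimension $t$, and translate $j_R$, $\nu_R$, $r_R$ and $\vdim_k\Soc_R$ back and forth through Matlis duality so that the statement becomes the classical relation between the type and the minimal number of generators of a Cohen--Macaulay module and its canonical module \cite[Proposition 3.3.11]{BH}. The only difference is that you spell out the identification $\Ext^{d-t}_R(T,M)\cong\Ext^{d-t}_R(M^{\vee},\omega_R)$ and its Matlis dual, which the paper records only as a parenthetical equality.
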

\begin{proof}
	In the rest of the proof, we can assume that $R$ is complete, and so $ T \cong \omega_R^{\vee} $.
	
	(i). One has
		\[\begin{array}{rl}
		j_R(M) &= r_R(M^{\vee})\\
		&= \nu_R\big(\Ext^{d-t}_R(M^{\vee},\omega_R)\big) \big(= \nu_R \big(\Ext^{d-t}_R(T,M)\big)\big) \\
		&= \vdim_k\Big(\Soc\big(\Ext^{d-t}_R(M^{\vee},\omega_R)^{\vee}\big)\Big) \\
		&= \vdim_k\Big(\Soc\big(\Tor_{d-t}^R(T,M^{\vee})\big)\Big), \\
		\end{array}\]
		in which the first equality is from Remark 5.2 and the second one is from \cite[Proposition 3.3.11(b)]{BH}.
		
		(ii). One has
			\[\begin{array}{rl}
		\vdim_k \big(\Soc_R(M)\big) &= \nu_R(M^{\vee})\\
		&= r_R\big(\Ext^{d-t}_R(M^{\vee},\omega_R)\big) \big(= r_R \big(\Ext^{d-t}_R(T,M)\big)\big)\\
		&= j_R\big(\Ext^{d-t}_R(M^{\vee},\omega_R)^{\vee}\big) \\
		&= j_R\big(\Tor_{d-t}^R(T,M^{\vee})\big), \\
		\end{array}\]
		in which the second equality is from \cite[Proposition 3.3.11(a)]{BH} and third one is from Remark 5.2.
\end{proof}
\begin{cor}
	Let $R$ be Cohen-Macaulay and let $M$ be an artinian $R$-module. The following are equivalent:
		\begin{itemize}
			\item[(i)]{$M$ is codualizing.}
			\item[(ii)]{$M$ is a \emph{MCCM} $R$-module with $ j_R(M) = 1 $ and $ \emph{\Ann}_R(M) = 0 $.}
		\end{itemize}
		\begin{proof}
			(i) $ \Longrightarrow $ (ii). Is clear by definition and Lemma 5.5.
			
			(ii) $ \Longrightarrow $ (i). We can assume that $R$ is complete. So that, by Remark 4.13, $R$ admits a codualizing module, say $T$. By Theorem 5.9(i), we have $ \nu_R \big(\Hom_R(T,M)\big) = 1 $. Hence $ \Hom_R(T,M) \cong R/\fa $ for some ideal $ \fa $ of $R$. Therefore, by Corollary 5.8, we have $ M \cong T \otimes_R \Hom_R(T,M) \cong T/\fa T $. It
			 follows that $ \fa M  = 0$ which implies that $ \fa = 0 $. Consequently, $ M \cong T $ is codualizing.
		\end{proof}
\end{cor}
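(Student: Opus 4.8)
I would establish the two implications separately. For $(i)\Rightarrow(ii)$ essentially everything is definitional: if $M$ is codualizing then, as recorded in the paragraph following Definition~5.3, $M$ is MCCM, so in particular $\width_R(M)=\dim(R)=d$; Lemma~5.5 then gives $\Tor^R_d(k,M)\cong k$, hence $j_R(M)=\vdim_k\Tor^R_d(k,M)=1$; and since $M$ is quasidualizing, the homothety map $\widehat R\to\Hom_R(M,M)$ is injective, so any $x\in\Ann_R(M)$ maps to $0$ in $\widehat R$ and therefore $x=0$ because $R\hookrightarrow\widehat R$. Thus $\Ann_R(M)=0$.

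The substantive direction is $(ii)\Rightarrow(i)$. First I would reduce to $R$ complete, checking that $M$ remains artinian, that $\width_R(M)$, $\mg_R(M)$ and $\dim(R)$ are unaffected by $\fm$-adic completion, that $j_R(M)=j_{\widehat R}(M)$ by Remark~5.2, and that $\Ann_{\widehat R}(M)=\Ann_R(M)\widehat R=0$; so $M$ is still MCCM with $j_R(M)=1$ and faithful. Now $R$ is a complete Cohen--Macaulay local ring, so by Remark~4.13 it admits a codualizing module $T$. Put $N=\Hom_R(T,M)$. Since $M$ is MCCM, Corollary~5.8 gives $T\otimes_R N\cong M$; and since $\mg_R(M)=d$, applying Theorem~5.9(i) with $t=d$ (so $d-t=0$) yields $\nu_R(N)=\nu_R(\Ext^0_R(T,M))=j_R(M)=1$. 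Hence $N$ is cyclic, say $N\cong R/\fa$ for an ideal $\fa$. Then $M\cong T\otimes_R R/\fa\cong T/\fa T$, so $\fa M=0$, i.e.\ $\fa\subseteq\Ann_R(M)=0$; therefore $M\cong T$ is codualizing, completing the proof.

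The argument is really just a synthesis of earlier results, so I do not expect a genuine obstacle. The two crucial inputs are Corollary~5.8 (that $T\otimes_R\Hom_R(T,-)$ is the identity on MCCM modules) and the first displayed equality of Theorem~5.9 (that the cotype of an artinian CCM module $M$ equals $\nu_R(\Ext^{d-t}_R(T,M))$); once these are available, passing from $\nu_R(\Hom_R(T,M))=1$ to $M\cong T$ is immediate. The only point demanding a little care is the verification that hypothesis (ii) is preserved under completion, which is what allows Remark~4.13 to be invoked to produce $T$.
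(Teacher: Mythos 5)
Your proposal is correct and follows essentially the same route as the paper: reduce to the complete case, invoke Remark~4.13 to produce a codualizing module $T$, use Theorem~5.9(i) to get $\nu_R(\Hom_R(T,M))=1$, then Corollary~5.8 to obtain $M\cong T/\fa T$, and kill $\fa$ via $\Ann_R(M)=0$. Your treatment is in fact slightly more careful than the paper's, since you spell out the (i)$\Rightarrow$(ii) direction and the verification that hypothesis (ii) survives completion, both of which the paper leaves implicit.
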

\begin{prop}
Let $R$ be complete and $ d = \emph{\dim}(R) $.	Let $T$ be codualizing and let $D$ be dualizing. Then
	$$\emph{Ext}_R^i(T,D) \cong \left\lbrace
	\begin{array}{c l}
	D\ \ & \text{ \ \ \ \ \ $i=d$,}\\
	0\ \   & \text{  \ \ $\text{ \ \ $i \neq d$}$.}
	\end{array}
	\right.$$
	and 
		$$\emph{Tor}^R_i(T,D) \cong \left\lbrace
		\begin{array}{c l}
		E(R/\fm)\ \ & \text{ \ \ \ \ \ $i=0$,}\\
		0\ \   & \text{  \ \ $\text{ \ \ $i > 0$}$.}
		\end{array}
		\right.$$
\end{prop}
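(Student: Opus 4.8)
The plan is to reduce both assertions to the basic duality relations between $T$ and $D$ together with standard properties of the canonical module and the fact that $T$ is $\fm$-power torsion. Since a codualizing module exists, $R$ is Cohen--Macaulay (Remark 4.5), so $D$ is, up to isomorphism, the canonical module $\omega_R$ of $R$; in particular $\id_R D=\depth_R D=d$, the type of $D$ equals $1$, and $\fm$ is the only prime of height $d$. As $R$ is complete, $D$ is Matlis reflexive, and $T$, being artinian, is Matlis reflexive and $\fm$-power torsion. By Remark 4.5 the artinian quasidualizing module $D^{\vee}$ is codualizing (for $D^{\vee\vee}\cong D$ is dualizing), hence $D^{\vee}\cong T$ by the uniqueness of the codualizing module (Remark 4.13); applying $(-)^{\vee}$ once more gives $D\cong T^{\vee}=\Hom_R(T,E(R/\fm))$. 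These two isomorphisms drive everything.

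For the $\Tor$ statement I would choose a projective resolution $P_{\bullet}\to T$, use that $(-)^{\vee}$ is exact, and apply Hom--tensor adjointness termwise to obtain, for every $i$,
\[\Tor^R_i(T,D)^{\vee}\cong H^i\big(\Hom_R(P_{\bullet}\otimes_R D,E(R/\fm))\big)\cong H^i\big(\Hom_R(P_{\bullet},D^{\vee})\big)=\Ext^i_R(T,D^{\vee})\cong\Ext^i_R(T,T).\]
Since $T$ is quasidualizing the last term is $0$ for $i>0$ and is $\Hom_R(T,T)\cong\widehat R=R$ for $i=0$. As $E(R/\fm)$ is a cogenerator, $\Tor^R_i(T,D)=0$ for $i>0$; and $T\otimes_R D$, being artinian and hence Matlis reflexive, satisfies $(T\otimes_R D)^{\vee}\cong R$, so $T\otimes_R D\cong R^{\vee}=E(R/\fm)$.

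For the $\Ext$ statement I would use the minimal injective resolution $0\to D\to I^0\to\cdots\to I^d\to 0$ of $D$. For $j<d$, $\depth_R D=d$ forces $\mu^j(\fm,D)=0$, so $I^j$ is a direct sum of modules $E(R/\fp)$ with $\fp\ne\fm$; fixing $x\in\fm\setminus\fp$, $x$ is a nonzerodivisor on $R/\fp$ and hence acts injectively on $E(R/\fp)$, so $\G_{\fm}(E(R/\fp))=0$ and therefore $\G_{\fm}(I^j)=0$; any $R$-homomorphism from the $\fm$-power torsion module $T$ into $I^j$ has image contained in $\G_{\fm}(I^j)$ and so vanishes, whence $\Hom_R(T,I^j)=0$ for all $j<d$. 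On the other hand $\mu^d(\fp,D)=\mu^d(\fp R_{\fp},D_{\fp})=0$ for $\fp\ne\fm$ (because $D_{\fp}$ is dualizing for $R_{\fp}$ and $\id_{R_{\fp}}D_{\fp}\le\dim R_{\fp}=\h\fp<d$), while $\mu^d(\fm,D)=1$ since the type of the canonical module is $1$; hence $I^d=E(R/\fm)$ and $\Hom_R(T,I^d)=\Hom_R(T,E(R/\fm))=T^{\vee}$. Thus $\Hom_R(T,I^{\bullet})$ is concentrated in cohomological degree $d$, giving $\Ext^i_R(T,D)=0$ for $i\ne d$ and $\Ext^d_R(T,D)\cong T^{\vee}\cong D$.

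The substance of the argument is really the setup — deriving $D^{\vee}\cong T$ and $D\cong T^{\vee}$ from Remarks 4.5 and 4.13 and recalling the extreme Bass numbers $\mu^j(\fm,\omega_R)=\delta_{jd}$ of the canonical module — after which each half is a short computation; the one step that needs a little care is the vanishing $\Hom_R(T,I^j)=0$ for $j<d$, which rests on $T$ being $\fm$-power torsion and on $\G_{\fm}$ commuting with direct sums. A parallel alternative for the $\Ext$ half replaces the injective resolution of $D$ by the minimal flat resolution of $T$ appearing in the proof of Lemma 5.5: one checks $T\otimes_R\prod_{\h\fp=i}T_{\fp}=0$ for $i<d$ (again using that $T$ is $\fm$-power torsion, together with $R/\fm^n$ being finitely presented, so that $-\otimes_R R/\fm^n$ commutes with the product), deduces $\Tor^R_i(T,T)=0$ for $i\ne d$ and $\Tor^R_d(T,T)\cong T$, and then concludes via $\Ext^i_R(T,D)\cong\Ext^i_R\big(T,\Hom_R(T,E(R/\fm))\big)\cong\Tor^R_i(T,T)^{\vee}$.
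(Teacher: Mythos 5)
Your proof is correct and follows essentially the same route as the paper: the Ext computation via the minimal injective resolution of $D$ together with $\Hom_R(T,E(R/\fp))=0$ for $\fp\neq\fm$ and $\Hom_R(T,E(R/\fm))=T^{\vee}\cong D$, and the Tor computation via Matlis duality from $T\cong D^{\vee}$. You merely supply details the paper leaves implicit (the identification $T\cong D^{\vee}$ from uniqueness, and the torsion argument showing homomorphisms from $T$ into the lower injective terms vanish).
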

\begin{proof}
	The minimal injective resolution of $D$ is of the form \\
	\centerline{$ 0 \rightarrow D \rightarrow \underset{\h(\fp) = 0} \bigoplus E(R/ \fp) \rightarrow  \cdots \rightarrow \underset{\h(\fp) = d-1} \bigoplus E(R/ \fp) \rightarrow E(R/ \fm) \rightarrow 0 $.}
	Now, for the first isomorphism, just note that $ \Hom_R(T,E(R/\fp))  = 0$ for all $ \fp \neq \fm $ and that $ T^{\vee} \cong D $. Next, note that $ \Tor^R_i(T,D) = 0 $ for all $ i > 0 $. Finally, we have the isomorphisms $ T \otimes_R D \cong D^{\vee} \otimes_R D \cong E(R/\fm)$.
\end{proof}
\begin{thm}
Let $T$ ba a quasidualizing $R$-module and let $\emph{dim}(R)=d$. The following are equivalent:
		\begin{itemize}
			\item[(i)]{$T$ is codualizing.}
			\item[(ii)]{One has
				  $$\emph{Tor}^R_i(T,\emph{Ext}^j_R(T,k)) \cong \left\lbrace
				\begin{array}{c l}
				k\ \ & \text{ \ \ \ \ \ $i=j=d$,}\\
				0\ \   & \text{  \ \ $\text{ \ \ otherwise}$.}
				\end{array}
				\right.$$
			}
				\item[(iii)]{One has
					$$\emph{Ext}_R^i(T,\emph{Tor}^R_i(T,k)) \cong \left\lbrace
					\begin{array}{c l}
					k\ \ & \text{ \ \ \ \ \ $i=j=d$,}\\
					0\ \   & \text{  \ \ $\text{ \ \ otherwise}$.}
					\end{array}
					\right.$$
				}
		\end{itemize}
\end{thm}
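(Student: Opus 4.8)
The plan is to reduce both (ii) and (iii) to one and the same numerical condition on the ``Betti numbers'' of $T$ against $k$, and then recognize that condition as finiteness of the injective dimension of the semidualizing module $T^{\vee}$. First I would pass to the completion: conditions (ii) and (iii) are unchanged, since the artinian module $T$ is naturally a $\widehat{R}$-module with $\widehat{R}\otimes_R T\cong T$ and $\widehat{R}\otimes_R k\cong k$, so that $\Tor^R_i(T,-)$ and $\Ext^i_R(T,-)$ evaluated on $k$-modules coincide with their $\widehat{R}$-counterparts; and $T$ is codualizing over $R$ if and only if it is over $\widehat{R}$. So assume $R$ is complete, put $E=E(R/\fm)$ and $C=T^{\vee}$. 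By \cite[Theorem 3.1]{K}, $C$ is a semidualizing $R$-module and $T\cong C^{\vee}$, and by Remark 4.5 the module $T$ is codualizing if and only if $\id_R(C)<\infty$.

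The key computational step is to show that for each $i\geq 0$ the modules $\Tor^R_i(T,k)$ and $\Ext^i_R(T,k)$ are finite-dimensional $k$-vector spaces of one and the same dimension, call it $\beta_i$, and moreover $\beta_i=\vdim_k\Ext^i_R(k,C)$. Tensoring a finite free resolution of $k$ with the artinian module $T$ exhibits $\Tor^R_i(T,k)$ as an artinian, $\fm$-torsion, hence finite-dimensional $k$-module; then $\Ext^i_R(T,k)\cong\Ext^i_R(T,k^{\vee})\cong\Tor^R_i(T,k)^{\vee}$ by \cite[Theorem 3.2.1]{EJ1}, of the same $k$-dimension; and both, being killed by $\fm$, are isomorphic to $k^{\beta_i}$ as $R$-modules. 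The identity $\beta_i=\vdim_k\Ext^i_R(k,C)$ I would obtain from the Hom--tensor duality $\Tor^R_i\big(k,\Hom_R(C,E)\big)\cong\Ext^i_R(k,C)^{\vee}$ (a routine computation with a finite free resolution of $k$, valid because $C$ is finitely generated), applied to $T\cong C^{\vee}=\Hom_R(C,E)$.

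Granting this, since $\Ext^j_R(T,k)\cong k^{\beta_j}\cong\Tor^R_j(T,k)$ as $R$-modules, additivity of $\Tor^R_i(T,-)$ and $\Ext^i_R(T,-)$ over finite direct sums gives $\Tor^R_i\big(T,\Ext^j_R(T,k)\big)\cong k^{\beta_i\beta_j}\cong\Ext^i_R\big(T,\Tor^R_j(T,k)\big)$ for all $i,j$. Hence each of (ii) and (iii) is equivalent to the condition $\beta_i\beta_j=1$ for $i=j=d$ and $\beta_i\beta_j=0$ otherwise; and since $\beta_d^2=1$ forces $\beta_d=1$, after which $\beta_i\beta_d=0$ forces $\beta_i=0$ for $i\neq d$, this is equivalent to the single condition $(\star)$: $\beta_d=1$ and $\beta_i=0$ for all $i\neq d$. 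In particular (ii) $\Leftrightarrow$ (iii). Finally, (i) $\Rightarrow$ $(\star)$ is immediate from Lemma 5.5; conversely, $(\star)$ says $\Ext^i_R(k,C)=0$ for all $i\neq d$, in particular for $i\gg 0$, which by the standard characterization of finite injective dimension in terms of vanishing of $\Ext_R^{\bullet}(k,-)$ forces $\id_R(C)<\infty$. Thus $C=T^{\vee}$ is a semidualizing module of finite injective dimension, i.e. dualizing, and so $T$ is codualizing by Remark 4.5; this closes the loop.

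The step I expect to be the main obstacle is the second paragraph, and specifically the identification $\beta_i=\vdim_k\Ext^i_R(k,T^{\vee})$. This is precisely what converts the self-referential hypotheses in (ii) and (iii)---which pair $\Tor$ and $\Ext$ of $T$ against itself---into a statement about the minimal injective resolution of the semidualizing module $C=T^{\vee}$; once it is in place, the remainder is bookkeeping together with the dictionary furnished by \cite[Theorem 3.1]{K}, Lemma 5.5 and Remark 4.5.
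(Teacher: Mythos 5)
Your proposal is correct and follows essentially the same route as the paper: both reduce to the complete case, use the finiteness results of Kubik--Leamer--Sather-Wagstaff together with Matlis duality $\Ext^i_R(T,k)\cong\Tor^R_i(T,k)^{\vee}$ to identify everything with powers of $k$, and then translate the resulting numerical condition into the vanishing of $\Ext^i_R(k,T^{\vee})$ for $i\neq d$, so that $T^{\vee}$ is dualizing. Your packaging via the single condition $(\star)$ on the numbers $\beta_i$ is a mild reorganization of the paper's separate arguments for (ii) $\Leftrightarrow$ (iii) and (ii) $\Rightarrow$ (i), not a different method.
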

\begin{proof}
	In the rest of the proof, we can assume that $R$ is complete.
	
	(i) $ \Longrightarrow $ (ii) and (i) $ \Longrightarrow $ (iii) are clear by Lemma 5.5.
	 	
	 	(ii) $ \Longleftrightarrow $ (iii).
	 	By \cite[Corollary 2.3 and Theorem 3.1]{KLW1}, $ \Tor^R_i(T,\Ext^j_R(T,k)) $ and $ \Ext^i_R(T,\Tor^R_j(T,k)) $ are finite dimensional $k$-vector spaces for all $ i \geq 0 $ and $ j \geq 0 $. This explains the first isomorphism in the following display 
	 		\[\begin{array}{rl}
	 		\Tor^R_i(T,\Ext^j_R(T,k)) &\cong \Tor^R_i\big(T,\Ext^j_R(T,k)\big)^{\vee}\\
	 		&\cong \Ext^i_R\big(T,\Ext^j_R(T,k)^{\vee}\big) \\
	 		&\cong \Ext^i_R\big(T,\Ext^j_R(k,T^{\vee})^{\vee}\big) \\
	 		&\cong  \Ext^i_R \big(T,\Tor^R_j(T^{\vee \vee},k)\big) \\
	 		&\cong  \Ext^i_R(T,\Tor^R_j(T,k)), \\
	 		\end{array}\]
	 and the other isomorphisms are from \cite[Theorem 3.2.1]{EJ1} and the fact that $ k \cong k^{\vee} $,
	 	for all $ i \geq 0 $ and $ j \geq 0 $.	
	
	(ii) $ \Longrightarrow $ (i).
	By \cite[Corollary 2.3]{KLW1}, we have $ \Ext_R^i(T,k) \cong k^{s_i} $ and by \cite[Theorem 3.1]{KLW1}, we have $ \Tor^R_i(T,k) \cong k^{t_i} $ for some $ s_i,t_i \in \mathbb{N} $.  Hence, by assumption, we have $ \Tor^R_i(T,k) = 0 $ for all $ i \neq d $ and 
	$s_d = 1 = t_d$. Thus, $ \Ext^i_R(k,T^{\vee}) \cong \Tor^R_i(T,k)^{\vee} = 0  $ for all $ i \neq d $ and 
	$  \Ext^d_R(k,T^{\vee}) \cong k^{\vee} \cong k$. Therefore $T^{\vee}$ is dualizing, whence $T$ is codualizing, as wanted.
\end{proof}
\begin{cor}
Let $R$ be a $d$-dimensional Cohen-Macaulay ring.
	\begin{itemize}
		\item[(i)]{One has
			$$\emph{Tor}^R_i\big(\emph{\H}^d_{\fm}(R),\emph{Ext}^j_R(\emph{\H}^d_{\fm}(R),k)\big) \cong \left\lbrace
			\begin{array}{c l}
			k\ \ & \text{ \ \ \ \ \ $i=j=d$,}\\
			0\ \   & \text{  \ \ $\text{ \ \ otherwise}$.}
			\end{array}
			\right.$$
		}
		\item[(ii)]{One has
			$$\emph{Ext}_R^i\big(\emph{\H}^d_{\fm}(R),\emph{Tor}^R_i(\emph{\H}^d_{\fm}(R),k)\big) \cong \left\lbrace
			\begin{array}{c l}
			k\ \ & \text{ \ \ \ \ \ $i=j=d$,}\\
			0\ \   & \text{  \ \ $\text{ \ \ otherwise}$.}
			\end{array}
			\right.$$
		}
	\end{itemize}
\end{cor}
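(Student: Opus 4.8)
The plan is to deduce both formulas in one stroke from Theorem 5.12 applied to the module $T = \H^d_\fm(R)$, so that the entire problem reduces to checking that the top local cohomology module of a $d$-dimensional Cohen-Macaulay local ring is a codualizing $R$-module. Once that is done, parts (i) and (ii) are exactly the implications (i)$\Rightarrow$(ii) and (i)$\Rightarrow$(iii) of Theorem 5.12 with this choice of $T$, read off verbatim.

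To see that $T = \H^d_\fm(R)$ is codualizing I would argue as follows. Being artinian, $T$ is in a natural way a $\widehat{R}$-module, and $T \cong \H^d_{\fm\widehat{R}}(\widehat{R})$; moreover the conditions defining ``quasidualizing'' and ``finite projective dimension'' for $T$ are insensitive to the passage from $R$ to $\widehat{R}$ (for the first, $\Hom_R(T,T)=\Hom_{\widehat R}(T,T)$ since $T$ is artinian; for the second, $\pd_R$ and $\pd_{\widehat R}$ agree on $\widehat R$-modules), so it suffices to treat the complete case. Assume then that $R$ is complete and Cohen-Macaulay. By Cohen's structure theorem $R$ is a homomorphic image of a regular, hence Gorenstein, local ring, so $R$ admits a canonical module $\omega_R$, and Grothendieck local duality gives $\H^d_\fm(R) \cong \omega_R^{\vee}$. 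Since $R$ is Cohen-Macaulay, $\omega_R$ is a \emph{dualizing} module: the homothety $\chi_R^{\omega_R}$ is an isomorphism, $\Ext_R^i(\omega_R,\omega_R)=0$ for $i>0$, and $\id_R(\omega_R)=d<\infty$. Now Corollary 3.2 shows that $\omega_R^{\vee}$ is quasidualizing, and since $\id_R(\omega_R)<\infty$ is equivalent to $\pd_R(\omega_R^{\vee})<\infty$ (Remark 4.5), we conclude that $T = \H^d_\fm(R) \cong \omega_R^{\vee}$ is codualizing over $R$.

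With $T$ established as a codualizing module, Theorem 5.12 applies directly; note that its statement does not require $R$ to be complete, so no further care about completion is needed for the $\Ext$/$\Tor$ displays themselves (which are finite-length $k$-vector spaces in any case). The only point that is more than a one-line citation is the identification of $\H^d_\fm(R)$ with $\omega_R^{\vee}$ and the verification that $\omega_R$ is dualizing — both classical facts about canonical modules of complete Cohen-Macaulay rings — together with the routine observation that being codualizing descends from $\widehat R$ to $R$; I expect no genuine obstacle here, the corollary being essentially a repackaging of Theorem 5.12 through local duality.
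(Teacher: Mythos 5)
Your argument is correct and follows the paper's own route: reduce to the complete case, use local duality to identify $\H^d_\fm(R)$ with $\omega_R^{\vee}$, conclude that it is codualizing, and then invoke Theorem 5.12. You simply spell out a few steps (the harmlessness of completion, and why $\omega_R^{\vee}$ is codualizing via Corollary 3.2 and Remark 4.5) that the paper leaves implicit.
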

\begin{proof}
Again, we can assume that $R$ is complete. Now, $R$ has a dualizing module $ \omega_R $ and, in addition, by local duality \cite[Theorem 11.2.8]{BS}, we have $ \H^d_{\fm}(R) \cong \omega_R^{\vee} $. Thus $ \H^d_{\fm}(R) $ is codualizing, and the result follows by Theorem 5.12.
\end{proof}
\begin{cor}
For a $d$-dimensional ring $R$, the following are equivalent:
 		\begin{itemize}
 			\item[(i)]{$R$ is Gorenstein.}
 			\item[(ii)]{One has
 				$$\emph{Tor}^R_i\big(E(R/\fm),\emph{Ext}^j_R(E(R/\fm),k)\big) \cong \left\lbrace
 				\begin{array}{c l}
 				k\ \ & \text{ \ \ \ \ \ $i=j=d$,}\\
 				0\ \   & \text{  \ \ $\text{ \ \ otherwise}$.}
 				\end{array}
 				\right.$$
 			}
 			\item[(iii)]{One has
 				$$\emph{Ext}_R^i\big(E(R/\fm),\emph{Tor}^R_i(E(R/\fm),k)\big) \cong \left\lbrace
 				\begin{array}{c l}
 				k\ \ & \text{ \ \ \ \ \ $i=j=d$,}\\
 				0\ \   & \text{  \ \ $\text{ \ \ otherwise}$.}
 				\end{array}
 				\right.$$
 			}
 		\end{itemize}
\end{cor}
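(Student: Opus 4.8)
The plan is to recognize this corollary as the special case $T = E(R/\fm)$ of Theorem 5.12, combined with the observation, recorded in Remark 4.4, that $R$ is Gorenstein precisely when $E(R/\fm)$ is codualizing. First I would note that $E(R/\fm)$ is always a quasidualizing $R$-module (this is remarked just after Definition 2.1), so Theorem 5.12 applies verbatim with $T = E(R/\fm)$. Under this substitution, conditions (ii) and (iii) of the present statement are literally conditions (ii) and (iii) of Theorem 5.12; hence each of them is equivalent to the assertion ``$E(R/\fm)$ is codualizing.'' In particular this already gives (ii) $\Longleftrightarrow$ (iii), and it remains only to identify ``$E(R/\fm)$ is codualizing'' with ``$R$ is Gorenstein.''

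For that identification I would pass to the completion. Since $E_R(R/\fm) = E_{\widehat R}(\widehat R/\fm\widehat R)$ is codualizing over $R$ if and only if it is codualizing over $\widehat R$, while $R$ is Gorenstein if and only if $\widehat R$ is, we may assume $R$ complete. Then by \cite[Theorem 3.1]{K} the module $E(R/\fm)$ is codualizing exactly when its Matlis dual $E(R/\fm)^{\vee} = \Hom_R\big(E(R/\fm),E(R/\fm)\big) = \widehat R = R$ is dualizing for $R$, i.e. exactly when $\id_R(R) < \infty$, which is to say $R$ is Gorenstein. This closes the chain (i) $\Longleftrightarrow$ (ii) $\Longleftrightarrow$ (iii). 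As an alternative route for the implication (i) $\Longrightarrow$ (ii), (iii), one may simply invoke Corollary 5.13: when $R$ is complete and Gorenstein, local duality gives $\H^d_{\fm}(R) \cong \omega_R^{\vee} \cong R^{\vee} = E(R/\fm)$, so the required isomorphisms are precisely those of that corollary.

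There is no genuine obstacle here beyond bookkeeping: all of the homological content — the third-quadrant spectral sequences, the Matlis-duality isomorphisms $\Tor^R_i(T,k)^{\vee}\cong\Ext^i_R(k,T^{\vee})$, and the reduction to the dualizing-module case — is already carried out inside the proof of Theorem 5.12, and the remaining step is the elementary descent of codualizingness and of Gorensteinness along $R \to \widehat R$. The one point I would be careful to state explicitly is exactly that stability of ``codualizing'' under completion, since it is what legitimizes the reduction to the complete case in both halves of the argument.
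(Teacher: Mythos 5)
Your proposal is correct and matches the paper's own argument: the paper proves this corollary exactly by setting $T = E(R/\fm)$ in Theorem 5.12 and invoking the fact (Remark 4.5) that $R$ is Gorenstein if and only if $E(R/\fm)$ is codualizing. Your explicit verification of that last equivalence via completion and \cite[Theorem 3.1]{K} simply unfolds what the cited remark already records, so there is nothing to add.
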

\begin{proof}
	In Theorem 5.12, set $ T = E(R/\fm) $ and note that $ R $ is Gorenstein if and only if $ E(R/\fm) $ is codualizing by Remark 4.5.
\end{proof}
\bibliographystyle{amsplain}

\end{document}